\def\squiggly{\bgroup \markoverwith{\textcolor{black}{\lower3.5\p@\hbox{\sixly \char58}}}\ULon}
\newtheorem{theorem}[subsection]{Theorem}  
\newtheorem{proposition}[subsection]{Proposition}
\newtheorem{lemma}[subsection]{Lemma}
\newtheorem{corollary}[subsection]{Corollary}
\newtheorem{definition}[subsection]{Definition}
\newtheorem{claim}[subsection]{Claim}
\newtheorem{example}[subsection]{Example}
\newtheorem{remark}[subsection]{Remark}
\def\fb{{\mathfrak{b}}}
\def\fg{{\mathfrak{g}}}
\def\fn{{\mathfrak{n}}}
\def\fm{{\mathfrak{m}}}
\def\fsl{{\mathfrak{sl}}}
\def\fgl{{\mathfrak{gl}}}
\def\BC{{\mathbb{C}}}
\def\BN{{\mathbb{N}}}
\def\BP{{\mathbb{P}}}
\def\BQ{{\mathbb{Q}}}
\def\BZ{{\mathbb{Z}}}
\def\woo{\widehat{\otimes}}
\def\CA{{\mathcal{A}}}
\def\CB{{\mathcal{B}}}
\def\CO{{\mathcal{O}}}
\def\CP{{\mathcal{P}}}
\def\CS{{\mathcal{S}}}
\def\CV{{\mathcal{V}}}
\def\ph{\varphi}
\def\sym{\textrm{Sym}}
\def\UU{U_q(L\fg)}
\def\UUp{U_q^+(L\fg)}
\def\UUpm{U^\pm_q(L\fg)}
\def\UUo{U_q^\circ(L\fg)}
\def\UUm{U_q^-(L\fg)}
\def\UUg{U_q^\geq(L\fg)}
\def\UUl{U_q^\leq(L\fg)}
\def\UUi{U_{q_1,q_2}(\ddot{\fgl}_1)}
\def\UUip{U_{q_1,q_2}^+(\ddot{\fgl}_1)}
\def\UUim{U_{q_1,q_2}^-(\ddot{\fgl}_1)}
\def\UUig{U_{q_1,q_2}^\geq(\ddot{\fgl}_1)}
\def\UUil{U_{q_1,q_2}^\leq(\ddot{\fgl}_1)}
\def\tUU{\widetilde{U}_q(L\fg)}
\def\tUUp{\widetilde{U}_q^+(L\fg)}
\def\tUUpm{\widetilde{U}^\pm_q(L\fg)}
\def\tUUm{\widetilde{U}_q^-(L\fg)}
\def\tUUg{\widetilde{U}_q^\geq(L\fg)}
\def\tUUl{\widetilde{U}_q^\leq(L\fg)}
\def\br{{\mathbf{r}}}
\def\bs{\boldsymbol{\varsigma}}
\def\cc{{\mathbb{C}^I}}
\def\nn{{\mathbb{N}^I}}
\def\zz{{\mathbb{Z}^I}}
\def\balpha{{\boldsymbol{\alpha}}}
\def\bom{{\boldsymbol{\la}}}
\def\bpsi{{\boldsymbol{\psi}}}
\def\btau{{\boldsymbol{\tau}}}
\def\bom{{\boldsymbol{\omega}}}
\def\bm{{\boldsymbol{m}}}
\def\bn{{\boldsymbol{n}}}
\def\bp{{\boldsymbol{p}}}
\def\bx{{\boldsymbol{x}}}
\def\by{{\boldsymbol{y}}}
\def\b0{{\boldsymbol{0}}}
\def\loccit{\emph{loc.~cit.~}}
\def\hdeg{\text{hdeg }}
\def\vdeg{\text{vdeg }}
\def\wI{\widehat{I}}
\def\UUaff{U_q(\widehat{\fg})_{c=1}}
\def\UUaffg{U_q(\widehat{\fb}^+)_{c=1}}
\def\UUaffl{U_q(\widehat{\fb}^-)_{c=1}}
\def\UUaffgl{U_q(\widehat{\fb}^\pm)_{c=1}}
\def\wI{\widehat{I}}
\def\wfg{\widehat{\fg}}
\def\bx{\boldsymbol{x}}
\def\vac{|\varnothing\rangle}
\def\lead{\text{lead}}
\def\ochi{\mathring{\chi}}
\def\oJ{\mathring{J}}
\def\oCS{\mathring{\CS}}
\def\omu{\mathring{\mu}}
\def\onu{\mathring{\nu}}
\def\oL{\mathring{L}}
\def\ord{\textbf{ord }}
\def\eord{\textbf{\emph{ord }}}
\def\binfty{\boldsymbol{\infty}}
\begin{document}

\title[Category $\CO$ for quantum loop algebras]{\Large{\textbf{Category $\CO$ for quantum loop algebras}}} 

\author[Andrei Negu\cb t]{Andrei Negu\cb t}

 \address{École Polytechnique Fédérale de Lausanne (EPFL), Lausanne, Switzerland \newline \text{ } \ \ Simion Stoilow Institute of Mathematics (IMAR), Bucharest, Romania} 

\email{andrei.negut@gmail.com}

\maketitle

\begin{abstract} 
	
We generalize the Hernandez-Jimbo category $\CO$ of representations of Borel subalgebras of quantum affine algebras to the case of quantum loop algebras for arbitrary Kac-Moody $\fg$ (as well as related algebras, such as quantum toroidal $\fgl_1$). Moreover, we give explicit realizations of all simple modules, and devise tools for the computation of $q$-characters that are new even for $\fg$ of finite type. Our techniques allow us to generalize classic results of Frenkel-Hernandez, Frenkel-Mukhin, Hernandez-Jimbo and Hernandez-Leclerc, as well as prove conjectures of Feigin-Jimbo-Miwa-Mukhin and Mukhin-Young.	
	
\end{abstract}

\bigskip

\epigraph{\emph{\`A Ansel, avec affection, amour, anticipation}}

\bigskip

\section{Introduction}
\label{sec:intro}

\medskip

\subsection{Category $\CO$} 
\label{sub:intro quantum}

Let $\fg$ be a simple finite-dimensional complex Lie algebra, and fix $q \in \BC^*$ which is not a root of unity. Chari-Pressley (\cite{CP}) classified finite-dimensional representations of the quantum affine algebra $\UUaff$ in terms of so-called $\ell$-weights (we let $I$ denote a set of simple roots of $\fg$)
\begin{equation}
	\label{eqn:intro ell weight}
	\bpsi = \left( \psi_i(z)  \right)_{i \in I} \in \left( \BC[[z^{-1}]]^* \right)^I
\end{equation}
 Hernandez-Jimbo (\cite{HJ}) extended this framework to their so-called category $\CO$ of representations of the Borel subalgebra of the quantum affine algebra
\begin{equation}
	\label{eqn:intro quantum borel}
	\UUaff \supset \UUaffg \curvearrowright V
\end{equation}
which decompose into finite-dimensional weight spaces (see Definition \ref{def:category o affine}). The building blocks of category $\CO$ are the simple modules $L(\bpsi)$ indexed by rational $\ell$-weights, i.e. those for which all the power series $\psi_i(z)$ in \eqref{eqn:intro ell weight} are expansions of rational functions. While category $\CO$ has been immensely influential in the theory of integrable systems, cluster algebras and categorification (see for instance \cite{Bi, FH, FHOO, GHL, HL Cluster, HL Borel, KKOP, Q}), its theory has so far been beset by two limitations.

\medskip

\begin{enumerate}[leftmargin=*]
	
\item Though it is known how to generalize $\UUaff$ to the setting of any Kac-Moody Lie algebra $\fg$ (using the so-called Drinfeld new realization $\UU$, see Definition \ref{def:quantum loop}), it was not clear how to generalize the Borel subalgebra in \eqref{eqn:intro quantum borel} (however, see an alternative approach for toroidal types in \cite{La}).

\medskip

\item The usual constructions of the simple modules $L(\bpsi)$ involve ingenious uses of tensor products, shifted quantum loop algebras, limits, analytic continuations and other techniques. However, an explicit description of the underlying vector space of $L(\bpsi)$ was not known uniformly in $\bpsi$.

\end{enumerate}

\medskip

\noindent In the present paper, we present solutions for both these issues. 

\begin{enumerate}[leftmargin=*]

\item We define an explicit subalgebra $\CA^{\geq} \subset \UU$ for any Kac-Moody Lie algebra $\fg$, which matches the Borel subalgebra in \eqref{eqn:intro quantum borel} when $\fg$ is of finite type. We extend the fundamental theory of category $\CO$ to this new level of generality.

\medskip 

\item We present any simple module $\CA^{\geq} \curvearrowright L(\bpsi)$ as an explicit subquotient of $\CA^{\geq}$. This opens the door to new computational tools for $L(\bpsi)$, which we use to prove certain theorems pertaining to characters, which were conjectural even for $\fg$ of finite type.

\end{enumerate} 

\medskip 

\noindent The explicit construction of the representation $L(\bpsi)$ is given in Corollary \ref{cor:simple module}. It has numerous applications, such as to  generalize the grading on positive prefundamental modules from \cite[Theorem 6.1]{FH}. In more detail, consider any polynomial $\ell$-weight
\begin{equation}
\label{eqn:polynomial intro}
\btau = (\tau_i(z))_{i \in I} \in \left(\BC^* + z^{-1}\BC[z^{-1}] \right)^I
\end{equation}
In Subsection \ref{sub:integral}, we will endow $L(\btau)$ with a natural grading by $(\bom - \nn) \times \BN$, where $\bom = \lead(\btau)$ is defined as in \eqref{eqn:psi to gamma} (note that in the present paper, $\BN$ contains 0). We denote this grading by $L(\btau) = \oplus_{\bn \in \nn, d \in \BN} L(\btau)_{\bom - \bn,d}$. In Proposition \ref{prop:bigrading}, we will show that the action $\CA^{\geq} \curvearrowright L(\btau)$ interacts with this grading as follows
\begin{equation}
		\label{eqn:act 1 intro}
		F \cdot L(\btau)_{\bom - \bn,d} \subseteq L(\btau)_{\bom - \bn+\text{hdeg } F, d+\text{vdeg } F}
	\end{equation}
	\begin{equation}
		\label{eqn:act 2 intro}
		\left[\frac {\ph_j^+(z)}{\tau_j(z)} \right]_{z^{-u}} \cdot L(\btau)_{\bom - \bn,d} \subseteq L(\btau)_{\bom - \bn,d+u}
	\end{equation}
	\begin{equation}
		\label{eqn:act 3 intro}
		E \cdot L(\btau)_{\bom - \bn,d} \subseteq \bigoplus_{\bullet = 0}^{\br \cdot \text{\hdeg E}} L(\btau)_{\bom - \bn+\text{hdeg }E, d+\text{vdeg } E - \bullet}
	\end{equation}
for all $F, \ph_j^+(z), E$ in the creating, diagonal, annihilating part of $\CA^{\geq}$, respectively (see \eqref{eqn:dot product}, \eqref{eqn:hdeg vdeg} and especially \eqref{eqn:slope zero plus} for our conventions in the formulas above).

\medskip

\subsection{$q$-characters} 
\label{sub:q-char intro}

Consider an arbitrary Kac-Moody Lie algebra $\fg$, associated to a symmetrizable Cartan matrix $\{d_{ij}\}_{i,j \in I}$ (see \eqref{eqn:cartan matrix}). Most interesting applications of category $\CO$ (see \cite{Bi, FH, FHOO, GHL, HL Cluster, HL Borel, KKOP, Q} and many other works) occur through their $q$-characters. These were originally defined in \cite{FR} for finite-dimensional representations, and were extended in \cite{HJ} for any representation \eqref{eqn:intro quantum borel} via the formula 
\begin{equation}
	\label{eqn:intro q-character}
	\chi_q(V) =  \sum_{\bpsi \in \left( \BC[[z^{-1}]]^* \right)^I} \dim_{\BC}(V_{\bpsi}) [\bpsi]
\end{equation}
where $[\bpsi]$ are formal symbols, and $V_{\bpsi}$ are generalized eigenspaces for the subalgebra
\begin{equation}
	\label{eqn:intro commutative family}
\BC\left[\ph_{i,0}^+, \ph_{i,1}^+,\ph_{i,2}^+,\dots\right]_{i \in I} \subset \CA^\geq \text{ of \eqref{eqn:***}}
\end{equation}
acting on $V$. The most important $q$-characters are those of the simple modules. To this end, we give in \eqref{eqn:q-character general} the following formula for all Kac-Moody $\fg$ and rational $\bpsi$
\begin{equation}
	\label{eqn:intro q-character simple}
	\chi_q(L(\bpsi)) = [\bpsi] \sum_{\bn \in \nn} \sum_{\bx \in \BC^\bn} \mu_{\bx}^{\bpsi} \left[\left( \prod_{i \in I} \prod_{a=1}^{n_i} \frac {z - x_{ia}q^{d_{ij}}}{zq^{d_{ij}}-x_{ia}} \right)_{j \in I} \right] 
\end{equation}
for certain multiplicities $\mu_{\bx}^{\bpsi} \in \BN$ that are defined as dimensions of certain explicit vector spaces in \eqref{eqn:q-character general multiplicity}. In the right-hand side, the product of symbols $[\bpsi]$ is taken component-wise in terms of $j \in I$ (see \eqref{eqn:product}). The outer sum in \eqref{eqn:intro q-character simple} goes over $\bn = (n_i)_{i \in I} \in \nn$, while the inner sum goes over
\begin{equation}
	\label{eqn:intro x}
	\bx = (x_{i1},\dots,x_{in_i})_{i \in I} \in \BC^{\bn} := \prod_{i \in I} \BC^{n_i}/S_{n_i}
\end{equation}
Our general formula for $\mu_{\bx}^{\bpsi}$ is not always easy to calculate, but we can get better formulas with the following modification $\oL(\bpsi)$ of $L(\bpsi)$ (Definition \ref{def:variant} and \eqref{eqn:q-character general variant multiplicity}):
\begin{equation}
	\label{eqn:intro q-character variant}
	\chi_q(\oL(\bpsi)) = [\bpsi] \sum_{\bn \in \nn} \sum_{\bx \in \BC^\bn} \omu_{\bx}^{\bpsi} \left[\left( \prod_{i \in I} \prod_{a=1}^{n_i} \frac {z - x_{ia}q^{d_{ij}}}{zq^{d_{ij}}-x_{ia}} \right)_{j \in I} \right] 
\end{equation}
For $\fg$ of finite type, our modification does not change anything, because for all $\bpsi$
\begin{equation}
	\label{eqn:intro are equal}
L(\bpsi) = \oL(\bpsi)
\end{equation}
(cf. \eqref{eqn:are equal}). In what follows, for any rational $\ell$-weight $\bpsi = (\psi_i(z))_{i \in I}$, let $(\ord \bpsi) \in \zz$ be the $I$-tuple of the orders of the pole at $z=0$ of the rational functions $\psi_i(z)$.

\medskip

\begin{theorem}
	\label{thm:main}
	
Consider any Kac-Moody Lie algebra $\fg$ and any rational $\ell$-weight $\bpsi$. For any $\by \in (\BC^*)^{\bm}$ and $\bn \geq \bm$, we let $\bx = (\by,\b0_{\bn-\bm}) \in \BC^{\bn}$ and we claim that
	\begin{equation}
		\label{eqn:intro factor}
		\omu_{\bx}^{\bpsi} = \omu_{\by}^{\bpsi} \onu^{\eord\bpsi}_{\bn - \bm}
	\end{equation}
	with $\onu^{\eord \bpsi}_{\bn - \bm}$ as in \eqref{eqn:multiplicity nu}. In other words, the modified $q$-character factors as
	\begin{equation}
		\label{eqn:intro factor series}
		\chi_q(\oL(\bpsi)) = \chi^{\neq 0}_q(\oL(\bpsi)) \cdot \ochi^{\eord \bpsi}
	\end{equation}
	where $\chi^{\neq 0}_q(\oL(\bpsi))$ is the RHS of \eqref{eqn:intro q-character variant} with $\sum_{\bx \in (\BC^*)^{\bn}}$ instead of $\sum_{\bx \in \BC^{\bn}}$, and
\begin{equation}
			\label{eqn:intro ochi integral}
			\ochi^{\br} = \sum_{\bn \in \nn} \onu^{\br}_{\bn} [-\bn]
		\end{equation}
	for all $\br \in \zz$, where $[-\bn]$ is interpreted as a constant $\ell$-weight, see Definition \ref{def:ell weight general}.

\end{theorem}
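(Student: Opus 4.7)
The plan is to first establish the multiplicity factorization \eqref{eqn:intro factor} at the level of the underlying graded vector spaces, and then deduce the series identity \eqref{eqn:intro factor series} by a bookkeeping of symbols. To begin, I would unpack $\omu_{\bx}^{\bpsi}$ from \eqref{eqn:q-character general variant multiplicity} as the dimension of the graded component of $\oL(\bpsi)$ indexed by the tuple $\bx$ of spectral parameters, and $\onu^{\br}_{\bn}$ from \eqref{eqn:multiplicity nu} as a purely combinatorial multiplicity depending on $\bpsi$ only through the pole-order tuple $\eord\bpsi$. In this language, \eqref{eqn:intro factor} asks for a tensor product decomposition of the component labelled by $(\by,\b0_{\bn-\bm})$ into its nonzero-parameter and zero-parameter factors.

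\medskip

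The central step is this decoupling. I would prove it directly from the explicit subquotient presentation of $\oL(\bpsi)$ provided by Corollary \ref{cor:simple module}: the relations imposed by the rational part of $\bpsi$ away from $z=0$ involve only spectral parameters distinct from the origin, while the collapse $x_{ia} \to 0$ retains only the leading data $\eord\bpsi$. This is precisely why the modified module $\oL(\bpsi)$ was introduced in Definition \ref{def:variant} (rather than working with $L(\bpsi)$), since the analogous statement for $L(\bpsi)$ generally fails outside finite type; in finite type, $L(\bpsi)=\oL(\bpsi)$ by \eqref{eqn:intro are equal}. Concretely, one checks that the operators of $\Ag$ whose action creates vectors labelled by zero spectral parameters commute, modulo the bigrading of Proposition \ref{prop:bigrading}, with those creating vectors labelled by nonzero parameters, and that the multiplicity of the ``at-origin'' factor is exactly $\onu^{\eord\bpsi}_{\bn-\bm}$.

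\medskip

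Granting \eqref{eqn:intro factor}, the series identity \eqref{eqn:intro factor series} then follows by partitioning
\[
\BC^{\bn} = \bigsqcup_{\bm \leq \bn} (\BC^*)^{\bm} \times \{\b0_{\bn-\bm}\}
\]
and noting that each zero spectral parameter contributes the constant
\[
\frac{z - 0 \cdot q^{d_{ij}}}{zq^{d_{ij}} - 0} = q^{-d_{ij}}
\]
to the symbol inside $[\,\cdot\,]$ of \eqref{eqn:intro q-character variant}. Assembling these factors over $j\in I$ yields the constant $\ell$-weight $[-(\bn-\bm)]$ of Definition \ref{def:ell weight general}, whence reindexing $\bk := \bn - \bm$ separates the resulting double sum into its nonzero part, which by definition is $\chi^{\neq 0}_q(\oL(\bpsi))$, times $\sum_{\bk \in \nn} \onu^{\eord\bpsi}_{\bk}[-\bk] = \ochi^{\eord\bpsi}$.

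\medskip

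The main obstacle is the decoupling step. One must extract from the subquotient presentation of $\oL(\bpsi)$ the precise sense in which ``zero spectral parameters'' form an independent block whose combinatorics depends on $\bpsi$ only through $\eord\bpsi$, and the essential features of the modification $L \rightsquigarrow \oL$ must all be used here. Verifying the tensor product structure is expected to require a careful inspection of a suitable PBW-type basis of the subquotient model, together with the interaction of \eqref{eqn:act 1 intro}--\eqref{eqn:act 3 intro} with the pole part of $\bpsi$ at $z=0$.
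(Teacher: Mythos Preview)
Your bookkeeping in the last paragraph is fine: once \eqref{eqn:intro factor} is known, the passage to \eqref{eqn:intro factor series} is exactly the partition of $\BC^{\bn}$ you describe, together with the observation that a zero spectral parameter contributes the constant $\ell$-weight $q^{-d_{ij}}$ in each slot. That part matches the paper.

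The gap is in your ``central step.'' You propose to obtain the decoupling by checking that certain operators of $\CA^{\geq}$ commute modulo the bigrading of Proposition~\ref{prop:bigrading}, and by inspecting a PBW-type basis. Neither ingredient is what actually drives the proof, and neither would be easy to make work. Proposition~\ref{prop:bigrading} and formulas \eqref{eqn:act 1 intro}--\eqref{eqn:act 3 intro} concern \emph{polynomial} $\ell$-weights and the vertical grading; they say nothing about how generalized eigenspaces for the $\CP_{\bn}$-action split along zero versus nonzero support. And there is no PBW basis of $\oL(\bpsi)$ adapted to spectral parameters available at this stage of the paper. You also invoke Corollary~\ref{cor:simple module}, but that gives a model of $L(\bpsi)$; the object you need is $\oL(\bpsi)=\CS^-_{<0}/\oJ(\bpsi)$ from Definition~\ref{def:variant}, and the crucial feature of $\oJ(\bpsi)$ you have not used is that it is cut out by pairings against $\oCS^+_{\geq 0}$, which is \emph{generated} by the monomials $\{z_{i1}^d\}_{d\geq 0}$.

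The paper's mechanism is residue calculus, and you are missing it entirely. Because $\oCS^+_{\geq 0}$ is generated by monomials, the condition $F\in\oJ(\bpsi)_{\bn}$ becomes, via \eqref{eqn:antipode pairing shuffle}, the vanishing of an explicit iterated contour integral over $1\ll|z_1|\ll\dots\ll|z_n|$. Shrinking the contours toward $0$ picks up residues at the (finitely many) nonzero poles of the $\psi_{i_a}(z_a)$ and at $z_a=0$; this is what separates the two kinds of spectral parameters. One then introduces two families of ideals $\fm^{\bpsi}_{\by}$ and $\fn^{\ord\bpsi}_{\bp}$ encoding the nonzero-residue and zero-residue conditions respectively, proves that membership in $\oJ(\bpsi)_{\bn}$ is equivalent to the partial expansion of $F$ landing in $\fm^{\bpsi}_{\by}\boxtimes\fn^{\ord\bpsi}_{\bn-\bm}$ for all $\bm,\by$, and finally checks that the resulting quotient at each support point $(\by,\b0_{\bn-\bm})$ factors as a tensor product with the correct dimensions. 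The ``$\oL$ versus $L$'' distinction matters precisely here: for $J(\bpsi)$ one would need to pair against all of $\CS^+_{\geq 0}$, which is not generated by monomials beyond finite type, and the contour-integral reformulation is unavailable. Your proposal does not mention residues, contour shifting, or the ideals $\fm,\fn$, so as written it does not contain the idea that makes the decoupling go through.
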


\medskip 

\noindent Our proof yields explicit formulas for $\omu_{\by}^{\bpsi}$ and $\onu^{\br}_{\bp}$ in \eqref{eqn:multiplicity mu} and \eqref{eqn:multiplicity nu} respectively, and thus provides a new approach for the computation of $q$-characters. For instance, we will show that $\omu_{\by}^{\bpsi} \neq 0$ for $\by = (y_{ia})^{i \in I}_{1 \leq a \leq m_i} \in (\BC^*)^{\bm}$ only if

\medskip

\begin{itemize}
	
	\item $y_{ia}$ is one of the finitely many poles of $\psi_i(z)$, or 
	
	\medskip
	
	\item $y_{ia} = y_{jb} q^{-d_{ij}}$ for some $(j,b) < (i,a)$.
	
\end{itemize}

\medskip

\noindent for all $i$,$a$, with respect to some total order on $\{(i,a)\}^{i \in I}_{1\leq a \leq m_i}$. This gives a combinatorial criterion for finding the summands that are allowed to appear in \eqref{eqn:intro q-character variant}. 

\medskip

\subsection{Computations and corollaries}
\label{sub:computational}

For any Kac-Moody Lie algebra $\fg$, our approach provides a set of tools for computing $q$-characters, which are new even for $\fg$ of finite type (complementing the myriad known combinatorial, algebraic and geometric approaches to $q$-characters, see  \cite{CM, FM, H, HL Cluster, MY, NN, Na 1, Na 2} and many other works). In particular, since polynomials have no poles in $\BC^*$, the combinatorial criterion at the end of the previous Subsection gives
\begin{equation}
\label{eqn:intro q-character polynomial variant}
\chi_q(\oL(\btau)) = [\btau]\ochi^{\ord \btau}
\end{equation}
for any polynomial $\ell$-weight $\btau$. Similarly, we show in Subsection \ref{sub:first corollary} that
\begin{equation}
	\label{eqn:intro q-character polynomial}
	\chi_q(L(\btau)) = [\btau]\chi^{\ord \btau}
\end{equation}
where $\chi^{\br}$ is defined for any $\br \in \zz$ in \eqref{eqn:chi r}; formula \eqref{eqn:intro q-character polynomial} had been known to experts by other means for $\fg$ of finite type, and we prove it for general $\fg$.

\medskip

\begin{proposition}
\label{prop:no pole}
	
For all $\br \in - \nn$, we have $\chi^{\br} =	\ochi^{\br} = 1$.

\end{proposition}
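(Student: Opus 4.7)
The strategy is to reduce both claims to a combinatorial vanishing: by the expansion \eqref{eqn:intro ochi integral} and its counterpart for $\chi^{\br}$ via \eqref{eqn:chi r}, the conclusion is equivalent to $\onu^{\br}_{\bn} = \nu^{\br}_{\bn} = \delta_{\bn, \b0}$ whenever $\br \in -\nn$.

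The first step is to specialize the factorization \eqref{eqn:intro factor} to the all-zero configuration $\bx = \b0_{\bn}$ (the case $\bm = \b0$, $\by = \varnothing$), which reads
$\omu^{\bpsi}_{\b0_{\bn}} = \omu^{\bpsi}_{\varnothing} \cdot \onu^{\eord \bpsi}_{\bn} = \onu^{\eord \bpsi}_{\bn}$, since the empty-$\by$ multiplicity $\omu^{\bpsi}_{\varnothing}$ records the highest $\ell$-weight contribution in \eqref{eqn:intro q-character variant} and equals $1$. Choosing any rational $\bpsi$ realizing $\ord \bpsi = \br$ (such $\bpsi$ clearly exist for every $\br \in \zz$) identifies $\onu^{\br}_{\bn}$ with $\omu^{\bpsi}_{\b0_{\bn}}$, and analogously $\nu^{\br}_{\bn}$ with $\mu^{\bpsi}_{\b0_{\bn}}$.

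I would then invoke the combinatorial criterion stated immediately after Theorem \ref{thm:main}: $\omu^{\bpsi}_{\by} \neq 0$ only if every $y_{ia}$ is either a pole of $\psi_i(z)$ or satisfies $y_{ia} = y_{jb} q^{-d_{ij}}$ for some $(j,b) < (i,a)$ in the fixed total order on $\{(i,a)\}^{i \in I}_{1 \leq a \leq m_i}$. Applied to the all-zero configuration with $\br \in -\nn$: no $\psi_i$ has a pole at $z=0$, so the first branch is excluded, while the second branch with $y_{ia} = 0$ would force $y_{jb} = 0$ at a strictly earlier index. An infinite descent along a finite totally ordered set is impossible, so the minimum of any putative nonempty all-zero configuration satisfies neither branch, a contradiction. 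Hence $\omu^{\bpsi}_{\b0_{\bn}} = 0$ for all $\bn \neq \b0$, giving $\onu^{\br}_{\bn} = \delta_{\bn, \b0}$ and therefore $\ochi^{\br} = [\b0] = 1$. Running the identical descent with $\mu$ in place of $\omu$ yields $\chi^{\br} = 1$.

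The main obstacle I anticipate is making the ``only if'' in the criterion rigorous at the level of the actual formulas \eqref{eqn:multiplicity nu} and \eqref{eqn:chi r}. If the multiplicities there are defined as counts of configurations cut out directly by the two alternatives, the descent is immediate. If instead they are defined as dimensions of subquotients of $\oL(\bpsi)$ or $L(\bpsi)$ via Corollary \ref{cor:simple module}, one must verify that the defining relations of the subquotient enforce the descent constraint when $\br \in -\nn$, i.e.\ that no creating operator in $\CA^{\geq}$ can introduce a new $x=0$ into the $\ell$-weight tuple without either recycling an existing pole of $\bpsi$ at $z=0$ or feeding off a previously produced zero.
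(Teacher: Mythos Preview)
Your argument has a genuine gap. The combinatorial criterion you invoke after Theorem~\ref{thm:main} is stated \emph{only} for $\by \in (\BC^*)^{\bm}$, i.e.\ for configurations with all coordinates nonzero; in the paper this criterion is deduced from the structure of the ideal $\fm^{\bpsi}_{\by}$ of Subsection~\ref{sub:ideal one}, which is defined for $x_1,\dots,x_n \in \BC^*$. The all-zero configuration is handled by a separate object $\fn^{\br}_{\bn}$ (Subsection~\ref{sub:ideal two}), and the ``pole or $q$-shift of an earlier coordinate'' dichotomy does not apply there. So your descent along the total order, which assumes $y_{ia}=0$ must satisfy one of the two branches, is not justified. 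Furthermore, for the unmodified character $\chi^{\br}$ you appeal to an analogue of Theorem~\ref{thm:main} with $\mu$ in place of $\omu$; no such analogue is stated (the factorization \eqref{eqn:intro factor} is specific to the modified multiplicities), so ``running the identical descent'' is unsupported.

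The paper's proof bypasses all of this with a one-line degree count directly from the definitions \eqref{eqn:j integral} and \eqref{eqn:j integral variant}. For $E \in \CS_{\geq 0|\bn}$ the case $\bm=\bn$ of the slope condition \eqref{eqn:slope e geq} forces $\vdeg E \geq 0$; multiplying by $\prod_{i,a} z_{ia}^{-r_i}$ with $-r_i \geq 0$ only raises the vertical degree; and any $F \in \CS_{<0|-\bn}$ with $\bn > \b0$ has $\vdeg F > 0$. Since the pairing vanishes unless the vertical degrees of its arguments sum to zero, the defining condition of $J^{\br}_{\bn}$ (and of $\oJ^{\br}_{\bn}$) is vacuously satisfied by every $F$, so both ideals equal $\CS_{<0|-\bn}$ and the quotients are zero. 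This argument is elementary, uses neither Theorem~\ref{thm:main} nor any combinatorial criterion, and handles $\chi^{\br}$ and $\ochi^{\br}$ simultaneously.
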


\medskip

\noindent In the notation of \cite{FR}, the $\ell$-weights which appear in the RHS of \eqref{eqn:intro q-character variant} satisfy
\begin{equation}
\label{eqn:old a}
\left( \frac {z - x_{ia}q^{d_{ij}}}{zq^{d_{ij}}-x_{ia}} \right)_{j \in I} = A_{i,x_{ia}}^{-1}
\end{equation}
Thus, if a rational $\ell$-weight is regular (i.e. has the property that all the rational functions $\psi_i(z)$ are regular at $z = 0$), then by combining \eqref{eqn:intro factor series} and Proposition \ref{prop:no pole} we obtain the following generalization of \cite[Theorem 4.1]{FM}.

\medskip

\begin{corollary}
	\label{cor:regular}
	
For any Kac-Moody $\fg$ and any regular $\ell$-weight $\bpsi$, we have
	\begin{equation}
		\label{eqn:intro regular}
\chi_q(\oL(\bpsi)) = \chi^{\neq 0}_q(\oL(\bpsi))
	\end{equation}
	
\end{corollary}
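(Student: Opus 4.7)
The corollary should be a one-line consequence of the two immediately preceding results, so the plan is simply to combine them. First, I would unpack the hypothesis: saying that the rational $\ell$-weight $\bpsi=(\psi_i(z))_{i\in I}$ is regular means that each $\psi_i(z)$ has no pole at $z=0$, so every entry of the tuple $\eord \bpsi \in \zz$ is $\leq 0$; equivalently, $\eord \bpsi \in -\nn$. By Proposition \ref{prop:no pole}, this forces $\ochi^{\eord \bpsi} = 1$.

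Second, I would substitute this value into the factorization \eqref{eqn:intro factor series} of Theorem \ref{thm:main}, obtaining
\[
\chi_q(\oL(\bpsi)) \;=\; \chi^{\neq 0}_q(\oL(\bpsi)) \cdot \ochi^{\eord \bpsi} \;=\; \chi^{\neq 0}_q(\oL(\bpsi)),
\]
which is exactly \eqref{eqn:intro regular}. Conceptually, the corollary is the statement that the contributions from $x_{ia} = 0$ in the sum \eqref{eqn:intro q-character variant} collapse to the trivial factor $1$ precisely when $\bpsi$ has no pole at the origin, so only the summands with $\bx \in (\BC^*)^{\bn}$ survive. There is no real obstacle here once Theorem \ref{thm:main} and Proposition \ref{prop:no pole} are in hand; the substantive work has already been done in those two results, with the combinatorial criterion from Subsection \ref{sub:q-char intro} ensuring that the factorization is the right framework. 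In finite type, combining with the identification $L(\bpsi)=\oL(\bpsi)$ of \eqref{eqn:intro are equal} recovers \cite[Theorem 4.1]{FM} as a special case.
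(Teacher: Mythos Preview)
Your proof is correct and follows exactly the same approach as the paper, which states the argument as ``Immediate from \eqref{eqn:intro factor series} and Proposition \ref{prop:no pole}.'' Your explicit unpacking of why regularity gives $\eord\bpsi \in -\nn$ and the subsequent substitution into the factorization is precisely the intended reasoning.
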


\medskip

\noindent An important consequence of Theorem \ref{thm:main} is the following result, which generalizes \cite[Conjecture 7.15]{HL Borel}. For $\fg$ of finite type, this result had been proved for so-called reachable simple modules using cluster categorification in \cite{KKOP, Q}, and for all simple modules using shifted quantum loop algebras in \cite{H Shifted}. 

\medskip

\begin{corollary}
\label{cor:main}

For any Kac-Moody $\fg$, consider a regular $\ell$-weight $\bpsi$, and a polynomial $\ell$-weight $\btau$ which is $\bpsi$-monochrome in the sense of Definition \ref{def:monochrome}. Then
\begin{equation}
\label{eqn:intro conjecture}
\chi_q(\oL(\bpsi \btau)) = \chi^{\btau}_q(\oL(\bpsi)) \cdot [\btau] \ochi^{\eord \bpsi \btau}
\end{equation}
where the truncated $q$-character $\chi^{\btau}_q(\oL(\bpsi))$ is defined in \eqref{eqn:truncated q-character finite}. 

\end{corollary}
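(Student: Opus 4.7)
The plan is to deduce this corollary from Theorem \ref{thm:main} applied to the product $\bpsi\btau$, followed by a factorization of the resulting non-zero part of the $q$-character that crucially uses the monochrome hypothesis on $\btau$. As a first step, applying Theorem \ref{thm:main} directly to the rational $\ell$-weight $\bpsi\btau$ yields
$$\chi_q(\oL(\bpsi\btau)) = \chi^{\neq 0}_q(\oL(\bpsi\btau)) \cdot \ochi^{\eord \bpsi\btau},$$
which already accounts for the $\ochi^{\eord \bpsi\btau}$ factor on the right-hand side of \eqref{eqn:intro conjecture}. It therefore remains to establish the identity $\chi^{\neq 0}_q(\oL(\bpsi\btau)) = \chi^{\btau}_q(\oL(\bpsi)) \cdot [\btau]$.

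To handle this remaining identity, I would first restrict to the admissible summands using the combinatorial criterion stated at the end of Subsection \ref{sub:q-char intro}: $\omu^{\bpsi\btau}_{\bx} \neq 0$ forces each coordinate $x_{ia}$ to be either a pole in $\BC^*$ of $(\psi_i\tau_i)(z)$, or of the form $x_{jb}q^{-d_{ij}}$ for some earlier $(j,b) < (i,a)$. Since $\bpsi$ is regular its poles all lie in $\BC^*$, whereas $\tau_i \in \BC^* + z^{-1}\BC[z^{-1}]$ has poles only at $z = 0$; in particular the candidate points $x_{ia} \in \BC^*$ for $\bpsi\btau$ coincide with the candidate points for $\bpsi$ alone. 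The final step is to match the individual multiplicities $\omu^{\bpsi\btau}_{\bx}$ with the coefficients of $\chi^{\btau}_q(\oL(\bpsi))$: starting from the explicit realization of $\oL(\bpsi\btau)$ as a subquotient of $\CA^{\geq}$ from Corollary \ref{cor:simple module}, together with the vector-space description of $\omu$ in \eqref{eqn:multiplicity mu}, the monochrome condition should ensure that incorporating the factor $\btau$ adds no new relations between its $z = 0$ data and the coordinates $\bx \in (\BC^*)^{\bn}$, beyond the elementary translation by $[\btau]$.

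The crux of the argument, and the step I expect to be the main obstacle, is precisely this last identification of multiplicities. One must verify that the monochrome hypothesis is strong enough to produce a canonical isomorphism between the subquotients of $\CA^{\geq}$ computing $\omu^{\bpsi\btau}_{\bx}$ and the combinatorial data cut out by $\btau$-truncation in the definition \eqref{eqn:truncated q-character finite} of $\chi^{\btau}_q(\oL(\bpsi))$. Without monochromaticity, coincidences between the $\bpsi$- and $\btau$-poles could introduce additional admissible $\bx \in (\BC^*)^{\bn}$ via the second clause of the combinatorial criterion, breaking the clean factorization; the role of Definition \ref{def:monochrome} is presumably precisely to exclude such interactions, and the content of the corollary reduces to showing that this exclusion is sufficient to decouple the $z = 0$ and $\BC^*$ factors.
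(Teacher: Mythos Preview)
Your overall structure is exactly the paper's: apply Theorem \ref{thm:main} to $\bpsi\btau$ to strip off the factor $\ochi^{\ord\bpsi\btau}$, and then match the multiplicities $\omu^{\bpsi\btau}_{\by}$ for $\by \in (\BC^*)^{\bm}$ with those appearing in the truncated character $\chi^{\btau}_q(\oL(\bpsi))$. What is missing is the actual mechanism for that match, and your diagnosis of the obstruction is slightly off.

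The polynomial $\ell$-weight $\btau$ has no poles in $\BC^*$, only zeros; so the issue is not ``coincidences between the $\bpsi$- and $\btau$-poles'' but rather what happens when some $\tau_i$ \emph{vanishes} at a coordinate of $\by$. Look again at Definition \ref{def:monochrome}: it is a dichotomy between $\prod_{i,a}\tau_i(y_{ia}) \neq 0$ (black) and $(\text{anything in }\CS_{-\bm})\cdot\prod_{i,a}\tau_i(z_{ia}) \in \fm^{\bpsi}_{\by}$ (white). The grey case it rules out is when $\btau$ vanishes at $\by$ but not to high enough order to kill the residue defining $\fm^{\bpsi}_{\by}$. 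Once you have this dichotomy, the multiplicity comparison is immediate from the explicit description \eqref{eqn:multiplicity mu} of $\omu^{\bpsi}_{\by}$ via the ideal $\fm^{\bpsi}_{\by}$ of Definition \ref{def:ideal one}: in the black case, multiplying the integrand of \eqref{eqn:residue x} by the nonvanishing factor $\prod_a \tau_{i_a}(z_a)$ does not change which $F$ have zero residue, so $\fm^{\bpsi\btau}_{\by} = \fm^{\bpsi}_{\by}$ and hence $\omu^{\bpsi\btau}_{\by} = \omu^{\bpsi}_{\by}$; in the white case, the defining condition \eqref{eqn:white} says precisely that every $F \in \CS_{<0|-\bm}$ already lies in $\fm^{\bpsi\btau}_{\by}$, so $\omu^{\bpsi\btau}_{\by} = 0$. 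Summing over $\by$ and recalling that the black $\by$ are exactly those retained in \eqref{eqn:truncated q-character finite} yields $\chi^{\neq 0}_q(\oL(\bpsi\btau)) = [\btau]\,\chi^{\btau}_q(\oL(\bpsi))$.
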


\medskip

\subsection{The Mukhin-Young conjecture}
\label{sub:my intro}

As explained at the end of Subsection \ref{sub:intro quantum}, for any polynomial $\ell$-weight $\btau$, we endow the simple module $L(\btau)$ with a grading by $d \in \BN$. One can do the same for $\oL(\btau)$, and define the refined $q$-characters
\begin{align}
&\chi_{q}^{\text{ref}}(L(\btau)) = [\btau] \sum_{\bn \in \nn} \sum_{d=0}^{\infty} \dim_{\BC} \Big( L(\btau)_{\bom-\bn,d}  \Big) [-\bn] v^d \label{eqn:refined intro q-character polynomial} \\
&\chi_{q}^{\text{ref}}(\oL(\btau)) = [\btau] \sum_{\bn \in \nn} \sum_{d=0}^{\infty} \dim_{\BC} \Big( \oL(\btau)_{\bom-\bn,d}  \Big) [-\bn] v^d \label{eqn:refined intro q-character polynomial variant} 
\end{align} 
(setting $v=1$ recovers the usual $q$-characters of $L(\btau)$ and $\oL(\btau)$). We will soon show that the following result refines and proves a conjecture of Mukhin-Young (\cite{MY}).  

\medskip 

\begin{theorem}
\label{thm:my refined}

Let $\fg$ be of finite type with set of positive roots $\Delta^+$. Then we have
\begin{equation}
\label{eqn:my refined}
\chi_{q}^{\emph{ref}}(L(\btau)) = [\btau] \prod_{\balpha \in \Delta^+} \prod_{d=1}^{(\eord \btau) \cdot \balpha} \frac 1{1-[-\balpha]v^d}
\end{equation}
for all polynomial $\ell$-weights $\btau$ with $\eord \btau \in \BZ_{>0}^I$, where $\cdot$ is defined in \eqref{eqn:dot product}. Our proof generalizes to any Kac-Moody Lie algebra $\fg$, in the sense of Remark \ref{rem:general}.

\end{theorem}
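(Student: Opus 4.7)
The plan is to lift the unrefined polynomial-weight formula \eqref{eqn:intro q-character polynomial variant} to the bigraded setting by tracking the vertical degree $d$ inherited from Proposition \ref{prop:bigrading}. Since \eqref{eqn:intro are equal} gives $L(\btau) = \oL(\btau)$ in finite type, the refined $q$-characters \eqref{eqn:refined intro q-character polynomial} and \eqref{eqn:refined intro q-character polynomial variant} agree, and I would first establish
\begin{equation*}
\chi_{q}^{\text{ref}}(\oL(\btau)) = [\btau]\,\ochi^{\text{ref},\eord\btau},
\end{equation*}
where $\ochi^{\text{ref},\br}$ refines \eqref{eqn:intro ochi integral} by upgrading each integer multiplicity $\onu^\br_\bn$ to its graded dimension with respect to the vertical grading. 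The theorem then reduces to the bigraded identity
\begin{equation*}
\ochi^{\text{ref},\br} = \prod_{\balpha \in \Delta^+}\prod_{d=1}^{\br\cdot\balpha}\frac{1}{1-[-\balpha]v^d} \quad \text{for all } \br \in \BZ_{>0}^I.
\end{equation*}

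The combinatorial core of the argument is to realize the bigraded vector space underlying $\oL(\btau)$ as the Hilbert series of a polynomial algebra in generators $F_{\balpha,d}$ with $\balpha \in \Delta^+$ and $1 \leq d \leq \br\cdot\balpha$, each of bidegree $(-\balpha,d)$. Concretely, I would use the explicit description of $\oL(\btau)$ as a subquotient of $\CA^{\geq}$ from Corollary \ref{cor:simple module}, together with a Beck-style loop-root PBW basis of the creating part $\CA^-$, to identify these generators as loop-root vectors at fixed vertical level. The bigrading \eqref{eqn:act 1 intro}--\eqref{eqn:act 3 intro} then forces $\oL(\btau)$ to be freely generated, as a bigraded space, by PBW monomials in the admissible $F_{\balpha,d}$, with the cutoff $d \leq \br\cdot\balpha$ emerging from the annihilator of the highest $\ell$-weight vector $v_\btau$.

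The main obstacle is verifying this cutoff precisely, which I would do by a matching pair of bounds. For the upper bound on $\dim \oL(\btau)_{\bom-\bn,d}$, one exhibits an explicit spanning set of admissible PBW monomials of the prescribed bidegree. For the lower bound, one invokes the combinatorial non-vanishing criterion after Theorem \ref{thm:main}: for polynomial $\btau$, this criterion forces every coordinate of $\bx$ in \eqref{eqn:intro x} to vanish, so the bigraded count reduces to a refined version of $\onu^{\eord\btau}_\bn$, which by direct inspection matches the number of admissible PBW monomials. For the Kac-Moody extension in Remark \ref{rem:general}, the strategy carries over with $\Delta^+$ replaced by the Kac-Moody positive root system; finiteness of each bigraded component is automatic from Proposition \ref{prop:bigrading}, and the combinatorial identity is formally unchanged.
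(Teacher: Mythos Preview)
Your proposal has a real gap: the heart of the argument, the cutoff $1 \leq d \leq \br \cdot \balpha$ and the claim that admissible PBW monomials freely span $\oL(\btau)$, is asserted rather than proved. Formulas \eqref{eqn:act 1 intro}--\eqref{eqn:act 3 intro} only describe how the $\CA^\geq$-action shifts bigradings; they do not by themselves force the quotient $\CS^-_{<0}/J^{\br}$ to be free on any particular set of PBW generators. Your ``upper bound'' requires knowing that every PBW monomial in $\CS^-_{<0}$ containing a factor $F_{\balpha,d}$ with $d > \br \cdot \balpha$ lies in $J^{\br}$, which is essentially the content of the theorem and not obvious from the annihilator condition \eqref{eqn:psi pairing}. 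Your ``lower bound'' invokes the non-vanishing criterion after Theorem \ref{thm:main}, but that criterion only constrains the support of $\bx$; it does not give you the exact bigraded dimension $\onu^{\br}_{\bn,d}$, and ``direct inspection'' is precisely what needs to be done. Finally, for the Kac-Moody extension you claim the identity is ``formally unchanged,'' but the paper states explicitly in Remark \ref{rem:general} that the requisite character formula for $\CS^-_{<0}$ is only conjectural outside finite type.

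The paper's proof avoids all of this by a telescoping argument. The key step is Lemma \ref{lem:composition}, which identifies $L^{\br}$ as a bigraded vector space with $\bigotimes^{\leftarrow}_{\mu \in [-1,0)} \CB^-_\mu$ via the slope factorization. Combined with the shift automorphism $\sigma$ of Proposition \ref{prop:easy} (which sends $\CB^-_\mu$ to $\CB^-_{\mu-1}$ and shifts vertical degree by $\br \cdot \bn$), one obtains $\chi^{\text{ref}}(\binfty) = \prod_{k \geq 0} \sigma^k(\chi^{\text{ref}}(\br))$, where $\chi^{\text{ref}}(\binfty)$ is the bigraded character of all of $\CS^-_{<0}$. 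The latter is computed from the classical PBW basis of $\UUaffg \cap \UUm$ as $\prod_{\balpha \in \Delta^+} \prod_{d \geq 1} (1-[-\balpha]v^d)^{-1}$, and dividing by its $\sigma$-shift yields \eqref{eqn:my refined} immediately. So the cutoff $d \leq \br \cdot \balpha$ appears not from analyzing annihilators, but from the slope window $[-1,0)$ together with the shift.
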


\medskip

\noindent Formula \eqref{eqn:my refined} actually holds for all polynomial $\ell$-weights $\btau$, but we would need certain technical modifications of our approach to deal with $\ord \btau$ having some entries 0 (these will be studied in upcoming work). Setting $v = 1$ in \eqref{eqn:my refined} gives the following formula for the usual character that features in \eqref{eqn:intro q-character polynomial}, for any $\br \in \BZ_{>0}^I$
\begin{equation}
\label{eqn:my}
\chi^{\br} = \prod_{\balpha \in \Delta^+} \left(  \frac 1{1-[-\balpha]} \right)^{\br \cdot \balpha}
\end{equation}
(still for $\fg$ of finite type). The characters \eqref{eqn:my} are additive in $\br$, due to the simplicity of tensor products from \cite[Theorem 4.11]{FH}. Therefore, we conclude that
\begin{equation}
\label{eqn:my i}
\chi^{\bs^i} = \prod_{\balpha \in \Delta^+} \left(  \frac 1{1-[-\balpha]} \right)^{\text{mult}_{\balpha_i}(\balpha)}
\end{equation}
where $\{\bs^i\}_{i\in I}$ is the standard basis of $\zz$, and $\text{mult}_{\balpha_i}(\balpha)$ is the multiplicity of the $i$-th simple root in $\balpha$. Formula \eqref{eqn:my i} was conjectured in \cite{MY} (to be more precise, \loccit dealt with the more general minimal affinizations, which in the particular case of fundamental weights are known to reproduce the characters $\chi^{\bs^i}$) and was proved case-by-case except for certain $i$ in type $E$ (\cite{L, LN, Naoi}, see also \cite{JKP, W}).

\medskip

\subsection{Quantum toroidal $\fgl_1$} 
\label{sub:intro toroidal}

The techniques and results in the present paper also apply to related algebras such as quantum toroidal $\fgl_1$ (also known as the Ding-Iohara-Miki algebra, see \cite{DI, M}). The appropriate Borel subalgebra
$$
\CA^\geq \subset \UUi
$$
as well as its natural category $\CO$ of representations, were introduced in \cite{FJMM}. In the present context, simple $\CA^\geq$ modules are parameterized by highest $\ell$-weights
$$
\bpsi = (\psi(z), m) \in \BC[[z^{-1}]]^* \times \BC
$$
where the second component $m$ amounts to an overall grading shift (for quantum toroidal $\fgl_1$, the Cartan elements do not encode the natural weight spaces of representations). Our explicit constructions of simple modules with highest $\ell$-weight $\bpsi$ allow us to obtain formulas for their $q$-characters
\begin{multline}
\label{eqn:intro char toroidal}
\chi_q(L(\bpsi)) = [\bpsi] \\ \sum_{n \in \BN} \sum_{\bx \in \BC^n/S_n} \mu_{\bx}^{\bpsi} \left[ \left( \prod_{a=1}^n \frac {(z - x_a q_1)(z - x_a q_2)(z q_1 q_2 - x_a)}{(z q_1 - x_a)(z q_2 - x_a)(z - x_a q_1 q_2)}, -n \right) \right]
\end{multline}
with $\mu_{\bx}^{\bpsi}$ given in \eqref{eqn:multiplicity toroidal} for any $\bx = (x_1,\dots,x_n)$. For a polynomial $\ell$-weight 
\begin{equation}
\label{eqn:intro integral toroidal}
\btau = \Big( \tau(z) = a_0+a_1z^{-1}+\dots+a_{r-1}z^{-r+1}+a_rz^{-r}, m \Big)
\end{equation}
with $a_0,a_r \neq 0$, we may introduce a grading $L(\btau) = \oplus_{n,d = 0}^{\infty} L(\btau)_{m-n,d}$, by analogy with the discussion at the end of Subsection \ref{sub:intro quantum}. Therefore, we may consider the following refinement of the character associated to polynomial $\ell$-weights
\begin{equation}
	\label{eqn:intro char polynomial toroidal refined}
	\chi_{q}^{\text{ref}}(L(\btau)) = [\btau] \sum_{n=0}^{\infty} \sum_{d=0}^{\infty} \dim_{\BC} \Big( L(\btau)_{m-n,d}  \Big) h^n v^d 
\end{equation}
where $h$ denotes the $\ell$-weight $[(1,-1)]$. With this in mind, the following proves \cite[Conjecture 4.20]{FJMM} (note that our conventions are dual to those of \emph{loc. cit.}).

\medskip

\begin{theorem}
\label{thm:toroidal}

For a polynomial $\ell$-weight $\btau$ as in \eqref{eqn:intro integral toroidal}, we have
\begin{equation}
\label{eqn:tau q-char intro}
\chi_{q}^{\emph{ref}}(L(\btau)) = [\btau] \prod_{n=1}^{\infty} \prod_{d=1}^{rn} \frac 1{1-h^n v^d}
\end{equation}
Moreover, we give analogues of formulas \eqref{eqn:act 1 intro}-\eqref{eqn:act 3 intro}, see \eqref{eqn:act tor 1}-\eqref{eqn:act tor 3}.

\end{theorem}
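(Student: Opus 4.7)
The plan is to parallel the proof of Theorem \ref{thm:my refined}, replacing the positive roots of a finite-type Lie algebra with the positive imaginary roots of $\widehat{\fgl}_1$, which are indexed by $n \in \BZ_{>0}$ each with multiplicity one. This is why the expected product has the shape $\prod_{n \geq 1}\prod_d (1-h^n v^d)^{-1}$: the $h$-degree tracks the imaginary root $n$, while the $v$-degree is cut off at $rn$ by the pole-order constraint imposed by $\btau$.

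First, I would establish the bigraded actions \eqref{eqn:act tor 1}-\eqref{eqn:act tor 3} on $L(\btau)$ by transporting the natural bigrading on $\CA^\geq \subset \UUi$ (by horizontal and vertical degree) through the explicit subquotient realization of $L(\btau)$ inside $\CA^\geq$, which is the toroidal analog of Corollary \ref{cor:simple module} and the central construction of the paper. With this in hand, $\chi_q^{\text{ref}}(L(\btau))$ becomes literally the bigraded Hilbert series of this concrete subquotient.

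Second, I would compute the Hilbert series itself. Since the creating part of $\CA^\geq$ acts freely on the highest-weight vector modulo the relations dictated by $\btau$, the bigraded character of $L(\btau)$ coincides with that of a quotient of the creating part. Using the shuffle-algebra presentation of quantum toroidal $\fgl_1$ one has a PBW-style decomposition along the horizontal slope direction, with imaginary root generators of $h$-degree $n$; the polynomial $\btau$ with $\ord \tau = r$ imposes wheel-type truncations that restrict the $v$-degree in the $n$-th imaginary block to the interval $\{1,\dots,rn\}$. Assembling the resulting geometric factors yields \eqref{eqn:tau q-char intro}.

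The main obstacle is precisely the matching of truncation length to pole order: one must show that the relations coming from $\btau$ cut the $v$-degree in the $n$-th imaginary block at exactly $rn$, neither over- nor under-cutting. For $r = 1$ this recovers the well-known plane-partition generating function attached to quantum toroidal $\fgl_1$, and for general $r$ the linear growth $rn$ should reflect the homogeneous bidegree of the $n$-th imaginary root generator under the natural $\BZ^2$-grading of $\UUi$. Once this bound is verified, setting $v = 1$ yields the unrefined character originally conjectured in \cite{FJMM}.
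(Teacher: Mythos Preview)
Your overall strategy matches the paper's: both reduce to the slope factorization of $\CS^-_{<0}$ and show that $L(\btau) \cong \CS^-_{<0}/J(\btau)$ has a basis consisting of ordered monomials in the generators $p_{-n,-d}$ with slope $-d/n \in [-r,0)$, yielding the product $\prod_n \prod_{d=1}^{rn}(1-h^nv^d)^{-1}$. The action formulas \eqref{eqn:act tor 1}--\eqref{eqn:act tor 3} are indeed proved exactly as in Proposition \ref{prop:bigrading}.

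However, your mechanism for the truncation step is off, and this is precisely the step you flag as the ``main obstacle''. The phrase ``wheel-type truncations'' is misleading: the wheel conditions \eqref{eqn:wheel toroidal} cut out $\CS^\pm$ itself and play no role in describing the ideal $J(\btau)$. The actual mechanism is the characterization $F \in J(\btau) \Leftrightarrow \langle E, S(F)\rangle = 0$ for all $E \in \CS^+_{\geq -r}$, which is the toroidal analogue of \eqref{eqn:j integral}, obtained as in \eqref{eqn:condition equivalent 1}--\eqref{eqn:condition equivalent 2}. The paper then uses two ingredients specific to the toroidal situation which you would need to supply: first, the explicit PBW basis \eqref{eqn:pbw} of $\CS^\pm$ indexed by convex lattice paths, together with its orthogonality \eqref{eqn:pairing toroidal}; second, an upper-triangularity formula for the antipode (Lemma \ref{lem:concave}) stating that $S(p^-_{\text{rev}(v)})$ equals $\pm p^-_v \kappa^{|v|}$ modulo convex paths lying strictly below $v$. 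Combining these gives a triangular system from which one reads off that $p^-_{\text{rev}(v)}$ survives in the quotient if and only if every step of $v$ has slope in $[-r,0)$, i.e.\ each $p_{-n,-d}$ has $1 \leq d \leq rn$. This is essentially the toroidal analogue of Lemma \ref{lem:composition}, but carried out concretely via the elliptic Hall algebra PBW basis rather than abstractly via slope subalgebras; without either this argument or a direct adaptation of Lemma \ref{lem:composition} (with slope parameter taken to be $r$ rather than $1$), your proposal leaves the crucial cutoff at $rn$ unverified.
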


\medskip

\subsection{Shuffle algebras} 
\label{sub:intro shuffle}

Let us now explain our techniques in a nutshell. While $\UUaff$ is only defined for $\fg$ of finite type, the quantum loop algebra $\UU$ is defined for any Kac-Moody Lie algebra $\fg$ (\cite{Dr}). However, to get a handle on $\UU$ for any $\fg$, we will resort to its shuffle algebra incarnation (\cite{E1, E2}, following \cite{FO})
$$
\Upsilon : \UU \xrightarrow{\sim} \CS^+ \otimes \frac {\BC \left[\ph^{\pm}_{i,0}, \ph^\pm_{i,1}, \ph^\pm_{i,2},\dots \right]_{i \in I}}{\ph_{i,0}^+ \ph_{i,0}^- - 1} \otimes \CS^-
$$
Above, $\CS^\pm$ are known as shuffle algebras: they are certain subsets of symmetric Laurent polynomial rings in arbitrarily many variables $\{z_{ia}\}_{i \in I, a \geq 1}$, which are explicitly understood for $\fg$ of finite type (\cite{E1, E2, FO, NT}) and for $\fg$ of simply laced Kac-Moody type (\cite{N Loop}), as well as in the related case of quantum toroidal $\fgl_1$. Let 
\begin{equation}
\label{eqn:***}
\CA^{\geq} = \CS_{\geq 0}^+ \otimes \BC \left[\ph^+_{i,0}, \ph^+_{i,1}, \ph^+_{i,2},\dots \right]_{i \in I} \otimes \CS^-_{<0}
\end{equation}
where $\CS_{\geq 0}^+$ and $\CS^-_{<0}$ denote certain subalgebras of $\CS^+$ and $\CS^-$ (respectively) defined in terms of the slope condition of Definition \ref{def:slope}. With this in mind, we show that
\begin{equation}
\label{eqn:intro l}
L(\bpsi) = \CS^-_{<0} \Big/ J(\bpsi)
\end{equation}
has a natural structure of a simple $\CA^{\geq}$ module for an arbitrary highest $\ell$-weight $\bpsi$, where $J(\bpsi)$ is the subset of $\CS_{<0}^-$ defined in Proposition \ref{prop:ideal}. All the results in the previous Subsections are proved by analyzing the objects in \eqref{eqn:intro l}, as well as their modifications from \eqref{eqn:ol}
\begin{equation}
	\label{eqn:intro ol}
	\oL(\bpsi) = \CS^-_{<0} \Big/ \oJ(\bpsi)
\end{equation}
The expressions \eqref{eqn:intro q-character simple} and \eqref{eqn:intro q-character variant} keep track of the graded dimensions of the vector spaces \eqref{eqn:intro l} and \eqref{eqn:intro ol}, respectively. In general, the defining property of $\oJ(\bpsi)$ is easier to work with than that of $J(\bpsi)$, but the two coincide for $\fg$ of finite type.

\medskip

\subsection{Plan of the paper}

In Section \ref{sec:quantum}, we recall the quantum affine $\UUaff$ and quantum loop $\UU$ algebras, as well as the definition of category $\CO$ for $\fg$ of finite type from \cite{HJ}.  In Section \ref{sec:shuffle}, we recall shuffle algebras and the explicit slope factorizations of $\UU$ that they provide. This allows us to define a subalgebra
$$
\CA^\geq \subset \UU
$$
for general Kac-Moody $\fg$, which is isomorphic to $\UUaffg \subset \UUaff$ for $\fg$ of finite type (see Proposition \ref{prop:slope affine}). In Section \ref{sec:representation}, we prove fundamental facts about category $\CO$ in the shuffle algebra language, and establish the claims at the end of Subsection \ref{sub:intro quantum}. In Section \ref{sec:characters}, we prove our main results on $q$-characters: Theorems \ref{thm:main} and \ref{thm:my refined}, as well as Corollaries \ref{cor:regular} and \ref{cor:main}. In Section \ref{sec:toroidal}, we present analogues of our constructions for quantum toroidal $\fgl_1$, and use them to prove Theorem \ref{thm:toroidal}.

\medskip

\subsection{Acknowledgements} I would like to thank David Hernandez for many important conversations about quantum affinizations and their representation theory, in particular for the suggestion to pursue the contents of Section \ref{sec:toroidal}. I would also like to thank Boris Feigin for many inspiring conversations on quantum toroidal and shuffle algebras over the years.

\bigskip

\section{Quantum loop and quantum affine algebras}
\label{sec:quantum}

\medskip

\subsection{Basic definitions}
\label{sub:basic}

Fix a finite set $I$ and a symmetrizable Cartan matrix
\begin{equation}
\label{eqn:cartan matrix}
\left(\frac {2d_{ij}}{d_{ii}} \in \BZ\right)_{i,j \in I}
\end{equation}
where $d_{ii}$ are even positive integers with gcd 2, and $d_{ij} = d_{ji}$ are non-positive integers. This data corresponds to a Kac-Moody Lie algebra $\fg$, which we will refer to as the ``type" of all algebras considered hereafter. The set $I$ should be interpreted as a set of simple roots of $\fg$, and $\zz$ should be interpreted as the root lattice. The Cartan matrix \eqref{eqn:cartan matrix} is called symmetric, or equivalently $\fg$ is called simply laced, if
\begin{equation}
\label{eqn:symmetric matrix}
d_{ii} = 2, \quad \ \forall i \in I
\end{equation}
Knowledge of the integers $\{d_{ij}\}_{i,j \in I}$ provides a symmetric bilinear pairing 
\begin{equation}
\label{eqn:symmetric pairing}
\cc \otimes \cc \xrightarrow{( \cdot,\cdot )} \BC, \qquad (\bs^i, \bs^j) = d_{ij}
\end{equation}
where $\bs^i = \underbrace{(0,\dots,0,1,0,\dots,0)}_{1\text{ on the }i\text{-th position}}$. For any $\bm = (m_i)_{i \in I}, \bn = (n_i)_{i \in I} \in \cc$, we let
\begin{equation}
	\label{eqn:dot product}
	\bm \cdot \bn = \sum_{i \in I} m_i n_i
\end{equation}
and abbreviate $|\bn| = (1,\dots,1)\cdot \bn$. Consider the partial order on $\cc$ given by $\bm < \bn$ if $\bn - \bm \in \nn \backslash \b0$, where we write $\b0 = (0,\dots,0)$ (recall that $\BN$ contains 0).

\medskip

\subsection{The pre-quantum loop algebra}
\label{sub:pre-quantum}

Let $q$ be any non-zero complex number which is not a root of unity (more generally, the results of Sections \ref{sec:quantum} and \ref{sec:shuffle} hold for any non-zero non-root-of-unity $q$ in any field of characteristic zero). We henceforth fix a logarithm of $q$, so that $q^\lambda \in \BC$ be well-defined for any $\lambda \in \BC$. We abbreviate
\begin{equation}
\label{eqn:qi}
q_i = q^{\frac {d_{ii}}2}, \quad \forall i \in I
\end{equation}
Consider the following formal series for all $i \in I$
\begin{equation}
\label{eqn:formal series}
  e_i(x) = \sum_{d \in \BZ} \frac {e_{i,d}}{x^d}, \qquad
  f_i(x) = \sum_{d \in \BZ} \frac {f_{i,d}}{x^d}, \qquad
  \ph^\pm_i(x) = \sum_{d = 0}^\infty \frac {\ph^\pm_{i,d}}{x^{\pm d}}
\end{equation}
and let $\delta(x) = \sum_{d \in \BZ} x^d$ be the formal delta function. For any $i,j \in I$, set
\begin{equation}
\label{eqn:zeta}
  \zeta_{ij} (x) = \frac {x - q^{-d_{ij}}}{x - 1}
\end{equation}
We now recall the definition of the quantum loop algebra of type $\fg$, i.e. our generalization of Drinfeld's so-called new presentation of quantum affine algebras. This will be done in two steps: 

\medskip

\begin{enumerate}[leftmargin=*]

\item start from the ``pre-quantum loop algebra", i.e. the algebra considered in \cite[Definition 3.1]{H Old} with the Drinfeld-Serre relations removed, and

\medskip

\item then impose additional relations (which we only know explicitly in finite or simply-laced types) in Definition \ref{def:quantum loop}. 

\end{enumerate}

\medskip

\begin{definition}
\label{def:pre quantum loop}

The pre-quantum loop algebra associated to $\fg$ is
$$
  \tUU = \BC \Big \langle e_{i,d}, f_{i,d}, \ph_{i,d'}^\pm \Big \rangle_{i \in I, d \in \BZ, d' \geq 0} \Big/
  \text{relations \eqref{eqn:rel 0 loop}-\eqref{eqn:rel 3 loop}}
$$
where we impose the following relations for all $i,j \in I$ and all $\pm, \pm' \in \{+,-\}$:
\begin{equation}
\label{eqn:rel 0 loop}
  e_i(x) e_j(y) \zeta_{ji} \left( \frac yx \right) =\, e_j(y) e_i(x) \zeta_{ij} \left(\frac xy \right)
\end{equation}
\begin{equation}
\label{eqn:rel 1 loop}
  \ph_j^\pm(y) e_i(x) \zeta_{ij} \left(\frac xy \right) = e_i(x) \ph_j^\pm(y) \zeta_{ji} \left( \frac yx \right)
\end{equation}
\begin{equation}
\label{eqn:rel 2 loop}
  \ph_{i}^{\pm}(x) \ph_{j}^{\pm'}(y) = \ph_{j}^{\pm'}(y) \ph_{i}^{\pm}(x), \quad
  \ph_{i,0}^+ \ph_{i,0}^- = 1
\end{equation}
as well as the opposite relations with $e$'s replaced by $f$'s, and finally the relation
\begin{equation}
\label{eqn:rel 3 loop}
  \left[ e_i(x), f_j(y) \right] =
  \frac {\delta_{ij}\delta \left(\frac xy \right)}{q_i - q_i^{-1}}  \Big( \ph_i^+(x) - \ph_i^-(y) \Big)
\end{equation}
In relation \eqref{eqn:rel 0 loop} we clear denominators and obtain relations by equating the coefficients of all $x^{-d} y^{-d'}$ in the left and right-hand sides, while in \eqref{eqn:rel 1 loop} we expand in non-positive powers of $y^{\pm 1}$ and then equate coefficients. 

\end{definition}

\medskip

\noindent We will sometimes replace the generators $\{\ph_{j,d}^\pm\}_{j \in I, d \geq 0}$ by $\{\kappa_j,p_{j,u}\}_{j \in I, u \in \BZ \backslash 0}$ via
\begin{equation}
\label{eqn:k and p}
\ph^\pm_j(y) = \kappa_j^{\pm 1} \exp \left( \sum_{u=1}^{\infty} \frac {p_{j,\pm u}}{uy^{\pm u}} \right)
\end{equation}
in terms of which relation \eqref{eqn:rel 1 loop} is equivalent to
\begin{equation}
\label{eqn:rel 1 loop k}
\kappa_j e_{i}(x) = e_{i}(x) \kappa_j q^{d_{ij}}
\end{equation}
\begin{equation}
\label{eqn:rel 1 loop p}
[p_{j,u}, e_i(x)] = e_{i}(x)x^u(q^{ud_{ij}} - q^{-ud_{ij}}) 
\end{equation}
for all $i,j \in I$ and $u \neq 0$. The algebra $\tUU$ is graded by $\zz \times \BZ$, with
\begin{equation}
\label{eqn:grading loop}
\deg e_{i,d} = (\bs^i,d), \qquad \deg f_{i,d} = (-\bs^i,d), \qquad \deg \ph^\pm_{i,d'} = (0,\pm d')
\end{equation}
We will encounter the following subalgebras of $\tUU$
\begin{align*}
&\tUUp \text{ generated by } \{e_{i,d}\}_{i \in I, d \in \BZ} \\
&\tUUm \text{ generated by } \{f_{i,d}\}_{i \in I, d \in \BZ} \\
&\UUo \text{ generated by } \{\ph_{i,d'}^\pm\}_{i \in I, d' \geq 0} \\
&\tUUg \text{ generated by } \{e_{i,d},\ph_{i,d'}^+ \}_{i \in I, d \in \BZ, d' \geq 0} \\
&\tUUl \text{ generated by } \{f_{i,d}, \ph_{i,d'}^- \}_{i \in I, d \in \BZ, d' \geq 0} 
\end{align*}

\medskip

\subsection{The Hopf algebra structure} 
\label{sub:hopf pre-quantum}

It is well-known that $\tUU$ admits a topological Hopf algebra structure, with coproduct determined by
\begin{equation}
\label{eqn:coproduct ph}
\Delta(\ph^\pm_i(z)) = \ph^\pm_i(z) \otimes \ph^\pm_i(z)
\end{equation}
\begin{equation}
\label{eqn:coproduct e}
\Delta(e_i(z)) = \ph_i^+(z) \otimes e_i(z) + e_i(z) \otimes 1
\end{equation}
\begin{equation}
\label{eqn:coproduct f}
\Delta(f_i(z)) = 1 \otimes f_i(z) + f_i(z) \otimes \ph_i^-(z)
\end{equation}
and antipode $S$ given by
\begin{equation}
\label{eqn:antipode phi}
S\left(\ph_i^\pm(z) \right) = \left(\ph^\pm_i(z)\right)^{-1}
\end{equation}
\begin{equation}
\label{eqn:antipode e}
S\left(e_i(z) \right) = -\left(\ph^+_i(z)\right)^{-1} e_i(z)
\end{equation}
\begin{equation}
\label{eqn:antipode f}
S\left(f_i(z) \right) = - f_i(z) \left(\ph^-_i(z)\right)^{-1}
\end{equation}
There is also a Hopf pairing
\begin{equation}
\label{eqn:pairing pre-quantum}
\tUUg \otimes \tUUl \xrightarrow{\langle \cdot, \cdot \rangle} \BC
\end{equation}
generated by the assignments
\begin{align}
&\Big \langle e_i(x), f_j(y) \Big \rangle = \frac {\delta_{ij}\delta \left(\frac xy \right)}{q_i^{-1} - q_i}  \label{eqn:pairing ef}\\
&\Big \langle \ph^+_i(x), \ph^-_j(y) \Big \rangle =  \frac {\zeta_{ij} \left(\frac xy \right)}{\displaystyle \zeta_{ji} \left(\frac yx \right)} \label{eqn:pairing ph}
\end{align}
(the right-hand side of expression \eqref{eqn:pairing ph} is expanded as $|x| \gg |y|$) and all other pairings between generators being 0. Recall that a Hopf pairing must satisfy
\begin{align}
&\Big \langle a,b_1b_2 \Big \rangle = \Big \langle \Delta(a), b_1 \otimes b_2 \Big \rangle \label{eqn:bialgebra 1} \\
&\Big \langle a_1 a_2 ,b \Big \rangle = \Big \langle a_1 \otimes a_2, \Delta^{\text{op}}(b) \Big \rangle \label{eqn:bialgebra 2} 
\end{align}
($\Delta^{\text{op}}$ is the opposite coproduct) and that moreover it is preserved by the antipode
\begin{equation}
\label{eqn:antipode pairing}
\Big \langle S(a), S(b) \Big \rangle = \Big \langle a,b \Big \rangle
\end{equation}
for all $a,a_1,a_2 \in \tUUg$ and $b,b_1,b_2 \in \tUUl$. With this in mind, we have
\begin{equation}
\label{eqn:drinfeld double}
ba = \Big \langle a_1, S(b_1) \Big \rangle a_2b_2 \Big \langle a_3, b_3 \Big \rangle
\end{equation}
for all $a \in \tUUg$ and $b \in \tUUl$, where we use Sweedler notation 
$$
\Delta^{(2)}(a) = a_1 \otimes a_2 \otimes a_3, \quad \Delta^{(2)}(b) = b_1 \otimes b_2 \otimes b_3
$$
(recall that $\Delta^{(2)} = (\Delta \otimes \text{Id} ) \circ \Delta$) to avoid writing down the implied summation signs. Formula \eqref{eqn:drinfeld double} encodes the fact that $\tUU$ is the \textbf{Drinfeld double} 
\begin{equation}
\label{eqn:pre drinfeld double}
\tUU = \tUUg \otimes \tUUl
\end{equation}
with the Hopf algebra structure induced by formulas \eqref{eqn:coproduct ph}-\eqref{eqn:pairing ph} \footnote{We tacitly identify $\kappa_i \otimes 1$ with the inverse of $1 \otimes \kappa_i^{-1}$ in the tensor product \eqref{eqn:pre drinfeld double} and hereafter.}.

\medskip

\subsection{The quantum loop algebra}

The pairing \eqref{eqn:pairing pre-quantum} has a non-trivial kernel, i.e.
\begin{align}
&I^+ \subset \tUUp, \quad \text{defined by }a \in I^+ \ \Leftrightarrow \ \Big \langle a, \tUUm \Big \rangle = 0 \label{eqn:kernel 1} \\
&I^- \subset \tUUm, \quad \text{defined by }b \in I^- \ \Leftrightarrow \ \Big \langle \tUUp, b \Big \rangle = 0 \label{eqn:kernel 2}
\end{align}
As \eqref{eqn:pairing pre-quantum} is a Hopf pairing, $I^\pm$ are ideals, so we may consider the quotient algebras 
$$
\UUpm = \tUUpm  \Big / I^\pm
$$

\medskip

\begin{definition}
\label{def:quantum loop}

The quantum loop algebra associated to $\fg$ is
\begin{equation}
\label{eqn:quantum loop}
\UU = \UUp \otimes \UUo \otimes \UUm
\end{equation}
with the topological Hopf algebra structure induced from that of $\tUU$. Explicitly, $\UU$ is defined by generators $e_{i,d}, f_{i,d}, \ph^\pm_{i,d'}$ modulo relations \eqref{eqn:rel 0 loop}-\eqref{eqn:rel 3 loop} together with additional relations that correspond to the generators of the ideals $I^\pm$. 

\end{definition}

\medskip

\noindent The topological Hopf algebra structure and Hopf pairing defined in Subsection \ref{sub:hopf pre-quantum} descend to the subalgebras 
\begin{align*}
&\UUg \text{ generated by }\UUp \text{ and }\{\ph_{i,d}^+\}_{i \in I, d \geq 0} \\
&\UUl \text{ generated by }\UUm \text{ and }\{\ph_{i,d}^-\}_{i \in I, d \geq 0}
\end{align*}
In particular, we obtain a pairing
\begin{equation}
\label{eqn:pairing quantum}
\UUg \otimes \UUl \xrightarrow{\langle \cdot, \cdot \rangle} \BC
\end{equation}
with respect to which $\UU$ is the Drinfeld double of $\UUg$ and $\UUl$. 

\medskip

\noindent Explicit formulas for the generators of the ideal $I^+$ (the ones of $I^-$ are obtained by replacing all $e$'s by $f$'s and reordering all products) are known when 

\medskip

\begin{itemize}[leftmargin=*]

\item $\fg$ is of finite type. These are Drinfeld's $q$-Serre relations of \cite{Dr} for all $i \neq j$ in $I$ (see also \cite{Da1}):
\begin{equation}
\label{eqn:serre}
  \sum_{\sigma \in S_n} \sum_{k=0}^{n} (-1)^k {n \choose k}_i e_i(x_{\sigma(1)})\dots e_i(x_{\sigma(k)}) e_j(y) e_i(x_{\sigma(k+1)}) \dots e_i(x_{\sigma(n)}) = 0
\end{equation}
where $n = 1 - \frac {2d_{ij}}{d_{ii}}$, ${n \choose k}_i = \frac {[n]_i!}{[k]_i![n-k]_i!}$ with $[k]_i! = [1]_i \dots [k]_i$ and $[k]_i = \frac {q_i^k - q_i^{-k}}{q_i - q_i^{-1}}$.

\medskip

\item $\fg$ is simply laced. Explicit formulas for the generators of $I^+$ were worked out in \cite[Subsection 1.3]{N Loop}.

\end{itemize}

\medskip

\noindent The problem of calculating explicit generators of the ideals $I^\pm$ is still open (and very interesting) for a non-simply laced Kac-Moody Lie algebra $\fg$, see \cite{N Arbitrary} for an overview. Once one has such a collection of generators, then one would obtain a complete and explicit generators-and-relations presentation of $\UU$. 

\medskip

\subsection{The quantum affine algebra} 
\label{sub:quantum affine}

When $\fg$ is of finite type, an important indication that \eqref{eqn:serre} are the ``correct" relations comes from the fact that $\UU$ thus defined provides an incarnation of the Drinfeld-Jimbo quantum group of type $\wfg$. 

\medskip

\begin{definition}
\label{def:quantum affine}

Set $\wI = I \sqcup \{0\}$ and extend the finite type Cartan matrix of $\fg$ by adding one more row and column corresponding to the affine root $0 \in \wI$. Define 
$$
\UUaff = \BC \Big \langle e_i, f_i, \kappa^{\pm 1}_i \Big \rangle_{i \in \wI} \Big/
\left(  \text{relations \eqref{eqn:rel 0 affine}-\eqref{eqn:rel 3 affine}, and }  \prod_{i \in \wI} \kappa_i^{\eta_i} = 1  \right)
$$
where $\{\eta_i\}_{i\in \wI}$ are the dual Kac labels, and we impose the relations
\begin{equation}
\label{eqn:rel 0 affine}
\sum_{k=0}^{n} (-1)^k {n \choose k}_{i}  e_i^k e_j e_i^{n-k} = \sum_{k=0}^{n} (-1)^k {n \choose k}_{i}  f_i^k f_j f_i^{n-k} = 0
\end{equation}
for all $i\neq j$ in $\wI$ (with the notation as in \eqref{eqn:serre}), as well as for all $i,j \in \wI$
\begin{equation}
\label{eqn:rel 1 affine}
\kappa_j e_i = e_i \kappa_j q^{d_{ij}}, \quad f_i \kappa_j  = \kappa_j f_i q^{d_{ij}}, \quad \kappa_i \kappa_j = \kappa_j \kappa_i
\end{equation}
\begin{equation}
\label{eqn:rel 3 affine}
[e_i,f_j] = \frac {\delta_{ij}}{q_i - q_i^{-1}}  \left( \kappa_i - \kappa_i^{-1} \right)
\end{equation}

\end{definition}

\medskip

\noindent The connection between quantum affine and quantum loop algebras is given by
\begin{equation}
\label{eqn:iso drinfeld}
\Phi : \UUaff \xrightarrow{\sim} \UU
\end{equation}
for any finite type Lie algebra $\fg$, which was claimed by Drinfeld (\cite{Dr}) and proved to be an isomorphism in complete detail by Beck (\cite{B}) and Damiani (\cite{Da2, Da3}). Explicitly, $\Phi$ sends
$$
\Phi(e_i) = e_{i,0}, \quad \Phi(f_i) = f_{i,0}, \quad \Phi(\kappa_i) = \kappa_i,
$$
for all $i \in I$, while
\begin{align}
&\Phi(e_0) = [f_{i_1,0},[f_{i_2,0},[ \dots, [f_{i_k,0}, f_{j,1}]_q\dots]_q]_q]_q \label{eqn:iterated commutator e} \\ 
&\Phi(f_0) = [e_{i_1,0},[e_{i_2,0},[ \dots, [e_{i_k,0}, e_{j,-1}]_q\dots]_q]_q]_q \label{eqn:iterated commutator f}
\end{align}
for certain $i_1,\dots,i_k,j \in I$. In the formulas above, the $q$-commutator is defined by 
\begin{equation}
\label{eqn:q commutator}
[a,b]_q = ab - q^{(\hdeg a, \hdeg b)} ba 
\end{equation}
where hdeg denotes the $\zz$ component of the grading \eqref{eqn:grading loop} and $(\cdot,\cdot)$ denotes the symmetric pairing \eqref{eqn:symmetric pairing}. The isomorphism \eqref{eqn:iso drinfeld} was the initial motivation for the introduction of quantum loop algebras, as well as a primary catalyst for the study of the representations that we will now recall.

\medskip

\subsection{Representations} 
\label{sub:representations}

For $\fg$ of finite type, the representation theory of $\UUaff$ has been studied for decades. A classification of its finite-dimensional simple modules in terms of the joint eigenvalues of the commuting operators $\ph_{i,d}^\pm$ (we tacitly invoke the isomorphism \eqref{eqn:iso drinfeld}) was given in \cite{CP}. Let us consider the subalgebra
\begin{equation}
\label{eqn:borel}
\UUaffg \subset \UUaff
\end{equation}
generated by $\{e_i, \kappa_i\}_{i \in \wI}$. The algebras $\UUaff$ and $\UUaffg$ have the same category of so-called type 1 finite-dimensional representations (up to tensoring by one-dimensional representations, see \cite[Remark 3.11]{FH}). With this in mind, Hernandez-Jimbo initiated the study of the following larger category.

\medskip

\begin{definition} 
\label{def:category o affine}

(\cite{HJ}) Consider the category $\CO$ of complex representations 
\begin{equation}
\label{eqn:rep affine}
\UUaffg \curvearrowright V
\end{equation}
which admit a decomposition
\begin{equation}
\label{eqn:weight decomposition}
V = \bigoplus_{\bom \in \cup_{s=1}^t (\bom^s - \nn) } V_{\bom}
\end{equation}
for finitely many $\bom^1,\dots,\bom^t \in \cc$, such that every weight space
\begin{equation}
\label{eqn:weight}
V_{\bom} = \Big\{v \in V \text{ s.t. } \kappa_i \cdot v = q^{(\bom, \bs^i)}v, \ \forall i\in I \Big\}
\end{equation}
is finite-dimensional.

\end{definition}

\medskip

\noindent If $v \in V_\bom$ as above, then we call $\bom$ the weight of $v$. We can talk about maximal weights with respect to the partial order $\bom \geq \bom'$ if $\bom-\bom' \in \nn$. This notion is relevant because if $v$ is a maximal weight vector in a representation \eqref{eqn:rep affine}, then 
$$
e_{i,d}\cdot v = 0
$$
for all $i \in I$, $d \geq 0$. Moreover, if the maximal weight space is spanned by $v$, then 
$$
\ph_{i,d}^+ \cdot v = \psi_{i,d}v
$$
for all $i \in I$, $d \geq 0$, where $\{\psi_{i,d}\}_{i \in I, d \geq 0}$ are certain complex numbers.

\medskip

\begin{definition} 
\label{def:l weights}

An \textbf{$\ell$-weight} is an $I$-tuple of invertible power series
$$
\bpsi = (\psi_i(z) )_{i \in I} \in \left( \BC[[z^{-1}]]^* \right)^I
$$
If every $\psi_i(z)$ is the expansion of a rational function, then $\bpsi$ is called \textbf{rational}.

\end{definition}

\medskip

\begin{theorem} 
\label{thm:simple}

(\cite{HJ}) Up to isomorphism, there is a unique simple representation 
$$
\UUaffg \curvearrowright L(\bpsi)
$$
generated by a single vector $\vac$ that satisfies the identities
$$
e_{i,d}\cdot \vac= 0
$$
$$
\ph^+_{i}(z)\cdot \vac = \psi_i(z)\vac
$$
for all $i \in I$, $d \geq 0$. This representation is in category $\CO$ if and only if $\bpsi$ is rational. 

\end{theorem}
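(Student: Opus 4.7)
The plan is to first establish existence and uniqueness of the simple highest-$\ell$-weight module for arbitrary $\bpsi$, and then separately address the rationality equivalence for category $\CO$.

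For existence and uniqueness, I would build a Verma-type module $M(\bpsi)$ by inducing up to $\UUaffg$ the one-dimensional representation $\BC\vac$ on which $e_{i,d}\cdot \vac = 0$ and $\ph_i^+(z)\cdot \vac = \psi_i(z)\vac$. Using the triangular decomposition of $\UUaffg$ inherited from Definition \ref{def:quantum loop}, $M(\bpsi)$ is graded by $\bom-\nn$ (with $\bom$ determined by $q^{(\bom,\bs^i)} = \psi_{i,0}$), and is generated by $\vac\in M(\bpsi)_{\bom}$. Because any proper submodule of $M(\bpsi)$ must miss $\vac$ by cyclicity, and the sum of two such submodules still misses $\vac$ by weight considerations, $M(\bpsi)$ has a unique maximal proper submodule $N(\bpsi)$; I would then set $L(\bpsi):=M(\bpsi)/N(\bpsi)$. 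Uniqueness follows by the universal property: any simple module generated by a highest-$\ell$-weight vector of $\ell$-weight $\bpsi$ is a simple quotient of $M(\bpsi)$ and hence agrees with $L(\bpsi)$. Note that $M(\bpsi)$ itself generally fails to lie in $\CO$, so the subtlety is to show its simple quotient does when $\bpsi$ is rational.

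For the ``if'' direction (rational $\bpsi\Rightarrow L(\bpsi)\in\CO$), I would follow Hernandez-Jimbo and produce an alternative construction of $L(\bpsi)$ as a subquotient of a tensor product of modules that are manifestly in $\CO$: by the uniqueness already established, any such construction must coincide with $L(\bpsi)$, giving $L(\bpsi)\in\CO$ for free. Concretely, any rational $\bpsi$ admits a factorization involving a one-dimensional $\UUaffo$-twist, a polynomial part contributing via positive prefundamental representations $L(\btau)$ (constructed in \emph{loc.~cit.} as inductive limits of renormalized Kirillov-Reshetikhin modules, and lying in $\CO$ by a direct combinatorial weight-space enumeration), and a finite-dimensional part handled by the Chari-Pressley classification. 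Category $\CO$ is closed under tensor products, so the cyclic subquotient generated by the tensor product of highest-$\ell$-weight vectors stays in $\CO$.

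The main obstacle is the converse: $L(\bpsi)\in\CO\Rightarrow\bpsi$ rational. Here I would exploit the finite-dimensionality of the weight spaces immediately below $\bom$ in $L(\bpsi)$. For each $i\in I$, the weight space $L(\bpsi)_{\bom-\bs^i}$ has some finite dimension $N_i$ and is spanned by vectors of the form $y\cdot \vac$ where $y\in\UUaffg$ has $\hdeg y = -\bs^i$; such elements are produced inside the Borel by $q$-commutators of the form \eqref{eqn:iterated commutator e}. Finite-dimensionality forces a nontrivial linear relation among $N_i+1$ such vectors, and commuting this relation through $\ph_i^+(z)$ by means of \eqref{eqn:rel 1 loop} — and extracting the scalar eigenvalues that $\ph_i^+(z)$ acts by on $\vac$ — produces a linear recurrence on the coefficients $\{\psi_{i,d}\}_{d\geq 0}$, which is precisely the rationality condition on $\psi_i(z)$. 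The delicate step is ensuring one obtains a sufficiently strong recurrence to pin down each $\psi_i$ individually; the cleanest way to do this uniformly in $i$ is via the shuffle-algebra realization developed later in Section \ref{sec:shuffle}, where the slope-zero generators of $\CS_{<0}^-$ make the necessary relations explicit.
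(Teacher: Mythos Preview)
This theorem is cited from \cite{HJ} and is not given an independent proof in the paper; the paper instead proves the generalization to arbitrary Kac-Moody $\fg$ (Theorem~\ref{thm:is rational}), using the shuffle-algebra machinery of Sections~\ref{sec:shuffle} and~\ref{sec:representation}. So there is no paper-proof of Theorem~\ref{thm:simple} per se to compare against, but your outline can be compared to both the original \cite{HJ} argument and to the paper's treatment of the general case.

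Your existence and uniqueness argument is the standard Verma-quotient story and is fine; the paper's version (Proposition~\ref{prop:ideal} and Corollary~\ref{cor:simple module}) is more explicit in that it identifies the maximal submodule as $J(\bpsi)\vac$ via the pairing condition~\eqref{eqn:psi pairing}, rather than abstractly as the sum of proper submodules. One quibble: the ``triangular decomposition of $\UUaffg$ inherited from Definition~\ref{def:quantum loop}'' is not quite the right object --- the relevant decomposition is the slope-based one $\CA^\geq = \CS^-_{<0} \otimes \CB^+_\infty \otimes \CS^+_{\geq 0}$ of~\eqref{eqn:slope zero plus}, matched to $\UUaffg$ via Proposition~\ref{prop:slope affine}. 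You seem aware of this by the end, but it would be cleaner to say so upfront.

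For the ``if'' direction you follow the original \cite{HJ} strategy (tensor products of prefundamentals and finite-dimensional modules). The paper's generalized proof takes a genuinely different route: it shows directly, via a residue computation on the integral~\eqref{eqn:the computation}, that rationality of $\bpsi$ forces $J(\bpsi)_{\bn}$ to have finite codimension in $\CS_{<0|-\bn}$ for each $\bn$. This avoids constructing prefundamentals and KR-limits altogether, and is what allows the argument to go through for general $\fg$ where no such constructions are available. Your approach is correct in finite type but does not generalize.

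For the ``only if'' direction your idea is essentially the paper's: the paper takes $0\neq Q(z_{i1})\in J(\bpsi)_{\bs^i}$ (which must exist by finite-dimensionality of $L(\bpsi)_{\bom-\bs^i}$) and reads off from the pairing condition that $\psi_i(z)Q(z)$ is a polynomial. Your phrasing in terms of ``commuting relations through $\ph_i^+(z)$'' and recurrences is morally the same but less direct; the shuffle pairing makes this step a one-liner.
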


\medskip

\subsection{$q$-characters} 
\label{sub:q-characters affine}

For any representation $\UUaffg \curvearrowright V$ in category $\CO$, the weight spaces (i.e. the joint eigenspaces of the commutative subalgebra $\{\kappa_i\}_{i \in I}$) are finite-dimensional by definition. Therefore, the bigger commutative subalgebra $\{\ph_{i,d}^+\}_{i \in I, d \geq 0}$ also has well-defined joint generalized eigenspaces, namely
\begin{equation}
\label{eqn:ell weight decomposition}
V = \bigoplus_{\bpsi \in \left( \BC[[z^{-1}]]^* \right)^I} V_{\bpsi}
\end{equation}
\begin{equation}
\label{eqn:ell weight}
V_{\bpsi} = \Big\{v \in V \text{ s.t. } (\ph_{i,d}^+ - \psi_{i,d} \cdot \text{Id}_V)^N \cdot v = 0 \text{ for }N \gg 1, \forall i\in I, d \geq 0\Big\}
\end{equation}
In particular, if $\psi_{i,0} = q^{(\bom,\bs^i)}, \forall i \in I$ for a weight $\bom \in \cc$, then $V_{\bpsi} \subseteq V_{\bom}$. In this case, we will call $\bom$ the leading weight of $\bpsi$, and denote this by $\bom = \lead(\bpsi)$.

\medskip

\begin{definition}
\label{def:q-character}

(\cite{FR}) The $q$-character of a representation $V$ in category $\CO$ is
\begin{equation}
\label{eqn:q-character definition}
\chi_q(V) = \sum_{\bpsi \in  \left( \BC[[z^{-1}]]^* \right)^I} \dim_{\BC} (V_{\bpsi}) [\bpsi]
\end{equation}
By \cite[Lemma 3.9]{HJ}, the sum above actually only goes over rational $\ell$-weights.

\end{definition}

\medskip

\noindent It is clear that $\chi_q(V)$ descends by linearity to the Grothendieck group of category $\CO$, which we will denote by $\text{Rep}(\CO)$. It is shown in \cite{HJ} that the map
$$
\chi_q : \text{Rep}(\CO) \rightarrow \prod_{\bpsi \in  \left( \BC[[z^{-1}]]^* \right)^I} \BZ [ \bpsi ]
$$
is injective. Moreover, $\chi_q$ is a ring homomorphism, with respect to the usual coproduct of quantum affine algebras in the LHS and the component-wise multiplication in the RHS \footnote{To be rigorous, in order for the multiplication \eqref{eqn:product} to be well-defined, one needs to replace $\prod_{\bpsi} \BZ  [\bpsi]$ by its subgroup $G$ of countable sums of $\bpsi$'s, whose leading weights lie in a finite union of the form $\cup_{s=1}^t (\bom^s - \nn)$, and such that only finitely many $\bpsi$'s have leading weight $\bom$ for any $\bom \in \cc$ (see \cite{HJ} for details). The $q$-character of any representation in category $\CO$ lies in $G$.}
\begin{equation}
\label{eqn:product}
\left[ \Big(\psi_i(z) \Big)_{i\in I} \right] \cdot \left[ \Big(\psi_i'(z) \Big)_{i\in I} \right] = \left[ \Big(\psi_i(z)\psi_i'(z) \Big)_{i\in I} \right]
\end{equation}
In the present paper, we will generalize the notions above to arbitrary Kac-Moody Lie algebras $\fg$, by finding a suitable generalization of the Borel subalgebra \eqref{eqn:borel}. We recall that the initial stumbling block to this is the fact that $\UUaff$ is not defined for general $\fg$, so we must instead define a subalgebra
$$
\CA^{\geq} \subset \UU
$$
which will play the role of \eqref{eqn:borel}. The key to constructing $\CA^{\geq}$, as well as providing the technical tools to prove analogues of the results in Subsections \ref{sub:representations} and \ref{sub:q-characters affine}, is to use the shuffle algebra incarnation of $\UU$. We will now review shuffle algebras.

\bigskip

\section{Shuffle algebras}
\label{sec:shuffle}

\medskip

\subsection{The big shuffle algebra}
\label{sub:big shuffle}

We now review the trigonometric degeneration (\cite{E1, E2}) of the Feigin-Odesskii shuffle algebra (\cite{FO}) associated to an arbitrary Kac-Moody Lie algebra $\fg$. Consider the following vector space of rational functions in arbitrarily many variables
\begin{equation}
\label{eqn:big shuffle}
\CV = \bigoplus_{\bn \in \nn} \CV_{\bn}, \quad \text{where} \quad \CV_{\bn} = \frac {\BC[z_{i1}^{\pm 1},\dots,z_{in_i}^{\pm 1}]^{\text{sym}}_{i \in I}}{\prod^{\text{unordered}}_{\{i \neq j\} \subset I} \prod_{1\leq a \leq n_i, 1\leq b \leq n_j} (z_{ia} - z_{jb})}
\end{equation}
Above, ``sym" refers to \textbf{color-symmetric} Laurent polynomials, meaning that they are symmetric in the variables $z_{i1},\dots,z_{in_i}$ for each $i \in I$ separately (the terminology is inspired by the fact that $i \in I$ is called the color of the variable $z_{ia}$). We make the vector space $\CV$ into a $\BC$-algebra via the following shuffle product:
\begin{equation}
\label{eqn:mult}
E( z_{i1}, \dots, z_{i n_i}) * E'(z_{i1}, \dots,z_{i n'_i}) = \frac 1{\bn!  \bn'!}\,\cdot
\end{equation}
$$
  \textrm{Sym} \left[ E(z_{i1}, \dots, z_{in_i}) E'(z_{i,n_i+1}, \dots, z_{i,n_i+n'_i})
               \prod_{i,j \in I}\prod_{1 \leq a \leq n_i, n_j < b \leq n_j+n_j'} \zeta_{ij} \left( \frac {z_{ia}}{z_{jb}} \right) \right]
$$
with $\zeta_{ij}$ as in \eqref{eqn:zeta}. In \eqref{eqn:mult}, $\sym$ denotes symmetrization with respect to the
\begin{equation*}
  (\bn+\bn')! := \prod_{i\in I} (n_i+n'_i)!
\end{equation*}
permutations of the variables $\{z_{i1}, \dots, z_{i,n_i+n'_i}\}$ for each $i$ independently. The shuffle product \eqref{eqn:mult} is easily seen to be associative and well-defined (the latter claim is not completely trivial, as it involves showing that the apparent poles at $z_{ia}=z_{ib}$ produced by the denominators of $\zeta_{ii}$ are eliminated by the symmetrization). 

\medskip

\subsection{The (small) shuffle algebra}
\label{sub:small shuffle}

The main motivation for the multiplication \eqref{eqn:mult} is to ensure the existence of an algebra homomorphism 
\begin{equation}
\label{eqn:upsilon tilde}
\widetilde{\Upsilon}^+ : \tUUp \rightarrow \CV, \qquad e_{i,d} \mapsto z_{i1}^d \in \CV_{\bs^i}, \quad \forall i \in I, d \in \BZ
\end{equation}
The following result is a particular case of \cite[Theorem 1.2]{N Arbitrary}, where the zeta function of \loccit is taken to be our $\zeta_{ij}(x)(1-x)^{\delta_{i<j}}$ with respect to any total order on $I$.

\medskip

\begin{theorem}
\label{thm:quantum to shuffle}

We have $\emph{Ker }\widetilde{\Upsilon}^+ = I^+$ of \eqref{eqn:kernel 1}, so $\widetilde{\Upsilon}^+$ induces an isomorphism
\begin{equation}
\label{eqn:upsilon plus}
\Upsilon^+ : \UUp \xrightarrow{\sim} \CS^+
\end{equation}
where the \textbf{(small) shuffle algebra} is defined as $\CS^+ = \emph{Im }\widetilde{\Upsilon}^+ \subseteq \CV$.

\end{theorem}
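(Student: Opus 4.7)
The plan is to first verify that $\widetilde{\Upsilon}^+$ is a well-defined algebra homomorphism and then identify its kernel through the Hopf pairing \eqref{eqn:pairing pre-quantum}. Well-definedness reduces to checking the compatibility of the shuffle product \eqref{eqn:mult} with the Drinfeld relation \eqref{eqn:rel 0 loop}: after substituting $e_{i,d} \mapsto z_{i1}^d$ and symmetrizing, the zeta-factors in \eqref{eqn:mult} produce exactly the quadratic relation, while \eqref{eqn:rel 1 loop}--\eqref{eqn:rel 3 loop} impose no constraints on $\tUUp$ as they involve Cartan or negative generators.

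To control the kernel, I would construct a mirror homomorphism $\widetilde{\Upsilon}^- : \tUUm \to \CV^{\mathrm{op}}$ (into the shuffle algebra built with $\zeta_{ji}$ replacing $\zeta_{ij}$) and equip $\CV$ and $\CV^{\mathrm{op}}$ with an explicit residue-type pairing $\langle \cdot , \cdot \rangle_{\CV}$ with Cauchy kernel $\prod \zeta_{ij}(z_{ia}/w_{jb})^{-1}$, in the style of \cite{E1, E2, FO, N Arbitrary}. The central identity
\[
\Big\langle \widetilde{\Upsilon}^+(a), \widetilde{\Upsilon}^-(b) \Big\rangle_{\CV} = \Big\langle a , b \Big\rangle, \qquad a \in \tUUp, \ b \in \tUUm,
\]
is proved by induction on degree: the base case follows from \eqref{eqn:pairing ef} by a direct delta-function computation, and the inductive step propagates through \eqref{eqn:bialgebra 1}--\eqref{eqn:bialgebra 2}, once one checks that the natural shuffle coproduct on $\CV$ (partitioning variables into two groups) is intertwined by $\widetilde{\Upsilon}^+$ with the coproduct on $\tUUp$ read off from \eqref{eqn:coproduct e}.

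This intertwining yields $\text{Ker }\widetilde{\Upsilon}^+ \subseteq I^+$ immediately. The reverse inclusion $I^+ \subseteq \text{Ker }\widetilde{\Upsilon}^+$ is the main obstacle, as it requires non-degeneracy of $\langle \cdot , \cdot \rangle_{\CV}$ in the first argument restricted to $\text{Im }\widetilde{\Upsilon}^+$. To handle this, I would observe that pairing with shuffle images of ordered monomials $f_{i_1,d_1} \cdots f_{i_n,d_n}$ amounts, via \eqref{eqn:pairing ef}, to extracting specific Laurent coefficients of a given shuffle element after symmetrization; varying the tuples $(d_1,\dots,d_n) \in \BZ^n$ recovers every coefficient of the underlying color-symmetric Laurent polynomial in $\CV_\bn$, so no non-zero shuffle can pair trivially with all such monomials. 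Finally, since our $\zeta_{ij}$ matches the normalization $\zeta_{ij}(x)(1-x)^{\delta_{i<j}}$ of \cite[Theorem 1.2]{N Arbitrary} for any chosen total order on $I$, the full argument is encapsulated by \loccit, which I would ultimately invoke to obtain the isomorphism $\Upsilon^+: \UUp \xrightarrow{\sim} \CS^+$.
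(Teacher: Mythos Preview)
Your proposal is correct and aligns with the paper's treatment: the paper does not give an independent proof but simply states that the result is a particular case of \cite[Theorem 1.2]{N Arbitrary} with the zeta function normalized as $\zeta_{ij}(x)(1-x)^{\delta_{i<j}}$, which is exactly the reference you invoke at the end. Your sketch of the underlying argument (intertwining the Hopf pairing with an explicit residue pairing on $\CV$, then reading off non-degeneracy from the fact that pairings against ordered monomials extract all Laurent coefficients, as in \eqref{eqn:pairing shuffle plus}) is an accurate summary of how that cited theorem is proved, so nothing is missing.
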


\medskip 

\noindent In other words, $\CS^+$ is the subalgebra generated by $z_{i1}^d$. For $\fg$ of finite type, we have
\begin{equation}
\label{eqn:e}
\CS^+ = \left\{  \frac {\rho(z_{i1},\dots,z_{in_i})}
  {\prod^{\text{unordered}}_{\{i \neq j\} \subset I} \prod_{1\leq a \leq n_i, 1\leq b \leq n_{j}} (z_{ia} - z_{jb})} \right\}
\end{equation}
where $\rho$ goes over the set of color-symmetric Laurent polynomials that satisfy the Feigin-Odesskii \textbf{wheel conditions}:
\begin{equation}
\label{eqn:wheel}
\rho(\dots, z_{ia}, \dots,z_{jb},\dots)\Big|_{(z_{i1},z_{i2}, \dots, z_{in}) \mapsto (w, w q^{d_{ii}},  \dots, w q^{(n-1)d_{ii}}),\, z_{j1} \mapsto w q^{-d_{ij}}} =  0
\end{equation}
for any $i \neq j$ in $I$, where $n = 1 - \frac {2d_{ij}}{d_{ii}}$ (the inclusion $\subseteq$ in \eqref{eqn:e} was established by \cite{E1, E2} following \cite{FO}, and the inclusion $\supseteq$ was proved in \cite{NT}). 

\medskip

\begin{remark}

For $\fg$ of simply laced Kac-Moody type, a complete description of $\CS^+$ was given in \cite{N Loop}. We do not know a complete and explicit description of $\CS^+$ for $\fg$ of arbitrary Kac-Moody type, and this is a very interesting open problem. This problem is dual to finding an explicit set of generators for the ideal \eqref{eqn:kernel 1}, see \cite{N Arbitrary}.

\end{remark}

\medskip

\subsection{The double shuffle algebra}
\label{sub:double shuffle}

We define $\CS^-$ analogously to $\CS^+$, but with respect to the opposite algebra structure on $\CV$. Therefore, we have an isomorphism 
\begin{equation}
\label{eqn:upsilon minus}
\Upsilon^- : \UUm \xrightarrow{\sim} \CS^-, \qquad f_{i,d} \mapsto z_{i1}^d \in \CV_{\bs^i}, \quad \forall i \in I, d \in \BZ
\end{equation}
Elements of either $\CS^+$ or $\CS^-$ will be referred to as \textbf{shuffle elements}. The isomorphisms $\Upsilon^+$ and $\Upsilon^-$ glue to produce an algebra isomorphism
\begin{equation}
\label{eqn:double shuffle}
\Upsilon : \UU \xrightarrow{\sim} \CS = \CS^+ \otimes \frac {\BC [\ph_{i,d}^\pm]_{i \in I, d \geq 0}}{\ph_{i,0}^+ \ph_{i,0}^- - 1} \otimes \CS^-
\end{equation}
where the \textbf{double shuffle algebra} $\CS$ is required to satisfy relations \eqref{eqn:shuffle plus commute}, \eqref{eqn:shuffle minus commute} and \eqref{eqn:shuffle plus minus commute}. Explicitly, these relations read for all $E \in \CS^+$, $F \in \CS^-$, $j \in I$
\begin{align}
&\ph_j^\pm(y) E(z_{i1},\dots,z_{in_i}) = E(z_{i1},\dots,z_{in_i}) \ph_j^\pm(y) \prod_{i \in I} \prod_{a=1}^{n_i}\frac {\zeta_{ji} \left(\frac y{z_{ia}} \right)}{\zeta_{ij} \left(\frac {z_{ia}}y \right)} \label{eqn:shuffle plus commute} \\
&F(z_{i1},\dots,z_{in_i}) \ph_j^\pm(y)  = \ph_j^\pm(y) F(z_{i1},\dots,z_{in_i}) \prod_{i \in I} \prod_{a=1}^{n_i}\frac {\zeta_{ji} \left(\frac y{z_{ia}} \right)}{\zeta_{ij} \left(\frac {z_{ia}}y \right)} \label{eqn:shuffle minus commute}
\end{align}
The right-hand sides of the expressions above are expanded in non-positive powers of $y^{\pm 1}$. In terms of the elements $\kappa_j, p_{j,u}$ of \eqref{eqn:k and p}, it is straightforward to show that the relations above are equivalent to
\begin{equation}
\label{eqn:k shuffle}
\kappa_j X = X \kappa_j q^{(\pm \bn, \bs^j)}
\end{equation}
\begin{equation}
\label{eqn:p shuffle}
\left[p_{j,u}, X \right] = \pm X \sum_{i \in I} \left(z_{i1}^u+\dots+z_{in_i}^u\right)(q^{ud_{ij}} - q^{-ud_{ij}})
\end{equation}
for any $X(z_{i1},\dots,z_{in_i}) \in \CS^\pm$, with $\bn = (n_i)_{i \in I}$. Finally, we need to prescribe how to commute elements of $\CS^+$ and $\CS^-$. Since $\CS^\pm$ are generated by $\{z_{i1}^d\}_{i\in I}^{d \in \BZ}$, one uses 
\begin{equation}
\label{eqn:shuffle plus minus commute}
\left[ (z_{i1}^d)^+, (z_{j1}^{d'})^- \right] = \frac {\delta_{ij}}{q_i - q_i^{-1}} \cdot \begin{cases} \ph_{i,d+d'}^+ &\text{if }d+d' > 0 \\ \ph_{i,0}^+ - \ph_{i,0}^- &\text{if }d+d' = 0 \\ - \ph_{i,-d-d'}^- &\text{if }d+d' < 0 \end{cases}
\end{equation}
where $(z_{i1}^d)^\pm$ refers to $z_{i1}^d \in \CV_{\bs^i}$ interpreted as an element of the algebra $\CS^\pm$. To show that \eqref{eqn:double shuffle} is an algebra homomorphism, one needs to compare formulas \eqref{eqn:shuffle plus commute} and \eqref{eqn:shuffle plus minus commute} with \eqref{eqn:rel 1 loop} and \eqref{eqn:rel 3 loop}, respectively. This is a straightforward exercise.

\medskip

\subsection{The grading}
\label{sub:grading}

The isomorphism \eqref{eqn:double shuffle} matches the grading \eqref{eqn:grading loop} with the following $\zz \times \BZ$ grading on the double shuffle algebra $\CS$:
\begin{equation}
\label{eqn:deg}
\deg X = (\pm \bn, d)
\end{equation}
for any $X(z_{i1},\dots,z_{in_i}) \in \CS^\pm$ of total homogeneous degree $d$, where $\bn = (n_i)_{i \in I}$. We will call $\pm \bn$ and $d$ the \textbf{horizontal} and \textbf{vertical} degrees of $X$, and denote them
\begin{equation}
\label{eqn:hdeg vdeg}
\hdeg X = \pm \bn \quad \text{and} \quad \vdeg X = d
\end{equation}
We will write
\begin{equation}
\label{eqn:graded pieces}
\CS^\pm = \bigoplus_{\bn \in \nn} \CS_{\pm \bn} = \bigoplus_{\bn \in \nn} \bigoplus_{d \in \BZ} \CS_{\pm \bn, d}
\end{equation}
for the graded pieces of the shuffle algebras $\CS^\pm$. With this in mind, \eqref{eqn:k shuffle} implies
\begin{equation}
\label{eqn:k shuffle bis}
\kappa_\bm X = X \kappa_\bm q^{(\hdeg X, \bm)}
\end{equation}
for all $X \in \CS$ and $\bm = (m_i)_{i \in I} \in \zz$, where we denote $\kappa_\bm = \prod_{i\in I} \kappa_i^{m_i}$.

\medskip

\subsection{The coproduct}
\label{sub:coproduct shuffle}

The contents of the present Subsection closely follow the analogous statements in \cite[Section 3]{N Toroidal} and \cite[Section 3]{N Wheel}, so we leave their proofs as exercises to the reader. There exist topological coproducts on
\begin{align}
\CS^{\geq} = \CS^+ \otimes \BC [\ph_{i,d}^+]_{i \in I, d \geq 0} \label{eqn:s geq} \\
\CS^{\leq} = \BC [\ph_{i,d}^-]_{i \in I, d \geq 0} \otimes \CS^- \label{eqn:s leq}
\end{align}
(we tacitly assume that $\ph_{i,0}^\pm$ are replaced by $\kappa_i^{\pm 1}$ in the formulas above, so they are assumed to be invertible) defined by formula \eqref{eqn:coproduct ph} and
\begin{align}
\Delta(E) = \sum_{\b0 \leq \bm \leq \bn} \frac {\prod^{j \in I}_{m_j < b \leq n_j} \ph^+_j(z_{jb}) E(z_{i1},\dots , z_{im_i} \otimes z_{i,m_i+1}, \dots, z_{in_i})}{\prod^{i \in I}_{1\leq a \leq m_i} \prod^{j \in I}_{m_j < b \leq n_j} \zeta_{ji} \left( \frac {z_{jb}}{z_{ia}} \right)} \label{eqn:coproduct shuffle plus} \\
\Delta(F) = \sum_{\b0 \leq \bm \leq \bn} \frac {F(z_{i1},\dots , z_{im_i} \otimes z_{i,m_i+1}, \dots, z_{in_i}) \prod^{j \in I}_{1 \leq b \leq m_j} \ph^-_j(z_{jb})}{\prod^{i \in I}_{1\leq a \leq m_i} \prod^{j \in I}_{m_j < b \leq n_j} \zeta_{ij} \left( \frac {z_{ia}}{z_{jb}} \right)} \label{eqn:coproduct shuffle minus} 
\end{align}
for all $E \in \CS_{\bn}$, $F \in \CS_{-\bn}$. To make sense of the right-hand side of formulas \eqref{eqn:coproduct shuffle plus} and \eqref{eqn:coproduct shuffle minus}, we expand the denominator as a power series in the range $|z_{ia}| \ll |z_{jb}|$, and place all the powers of $z_{ia}$ to the left of the $\otimes$ sign and all the powers of $z_{jb}$ to the right of the $\otimes$ sign (for all $i,j \in I$, $1 \leq a \leq m_i$, $m_j < b \leq n_j$). Thus,
\begin{align}
&\Delta(E) = \prod_{i \in I} \prod_{a=1}^{n_i} \ph^+_i(z_{ia}) \otimes E(z_{i1},\dots,z_{in_i}) + \dots \label{eqn:leading delta plus} \\
&\Delta(F) = F(z_{i1},\dots,z_{in_i}) \otimes \prod_{i \in I} \prod_{a=1}^{n_i} \ph^-_i(z_{ia}) + \dots \label{eqn:leading delta minus}
\end{align}
where the ellipsis denotes tensors in which the second (respectively first) factor has smaller (respectively larger) horizontal degree than $E$ (respectively $F$). While they seem complicated, formulas \eqref{eqn:coproduct shuffle plus}-\eqref{eqn:coproduct shuffle minus} are forced upon us by the multiplicativity of the Drinfeld coproduct.

\medskip

\subsection{The pairing}
\label{sub:pairing shuffle}

 Throughout the present paper, the notation 
$$
\int_{|z_1| \ll \dots \ll |z_n|} R(z_1,\dots,z_n) \quad \text{is short for} \ \int_{|z_1| \ll \dots \ll |z_n|} R(z_1,\dots,z_n)  \prod_{a=1}^n \frac {dz_a}{2 \pi i z_a}
$$
and refers to the contour integral over concentric circles centered at the origin of the complex plane (the notation $|z_a| \ll |z_b|$ means that the these circles are very far away from each other when compared to any constants that might appear in the formula for $R(z_1,\dots,z_n)$). 

\begin{definition}
\label{def:i ordering}

We will call $i_1,\dots,i_n \in I$ an \textbf{ordering} of $\bn \in \nn$ if
$$
\bs^{i_1} + \dots + \bs^{i_n} = \bn
$$
If this is the case, we will employ for any $X(z_{i1},\dots,z_{in_i}) \in \CS_{\pm \bn}$ the notation
\begin{equation}
\label{eqn:notation ordering}
X(z_1,\dots,z_n)
\end{equation} 
to indicate the fact that each symbol $z_a$ is plugged into a variable of the form $z_{i_a \bullet}$ of $X$, where the choice of $\bullet$ does not matter due to the color-symmetry of $X$.

\end{definition}

\medskip

\noindent Following \cite{N Wheel, NT}, we may define a Hopf pairing
\begin{equation}
\label{eqn:pairing shuffle}
\CS^\geq \otimes \CS^\leq \xrightarrow{\langle \cdot, \cdot \rangle} \BC
\end{equation}
by formula \eqref{eqn:pairing ph} together with \footnote{We will abuse notation in our formulas for the pairing by writing $e_{i,d}, f_{i,d}$ instead of $z_{i1}^d$.}
\begin{align}
&\Big \langle E, f_{i_1,d_1} * \dots * f_{i_n,d_n} \Big \rangle = \text{ct} \int_{|z_1| \ll \dots \ll |z_n|} \frac {z_1^{d_1} \dots z_n^{d_n} E(z_1,\dots,z_n)}{\prod_{1\leq a < b \leq n} \zeta_{i_ai_b} \left(\frac {z_a}{z_b} \right)} 
\label{eqn:pairing shuffle plus} \\
&\Big \langle e_{i_1,d_1} * \dots * e_{i_n,d_n}, F \Big \rangle = \text{ct} \int_{|z_1| \gg \dots \gg |z_n|} \frac {z_1^{d_1} \dots z_n^{d_n} F(z_1,\dots,z_n)}{\prod_{1\leq a < b \leq n} \zeta_{i_bi_a} \left(\frac {z_b}{z_a} \right)} \label{eqn:pairing shuffle minus}
\end{align}
for all $E \in \CS_{\bn}$, $F \in \CS_{-\bn}$, any $d_1,\dots,d_n \in \BZ$ and any ordering $i_1,\dots,i_n$ of $\bn$ (the notation in the RHS of \eqref{eqn:pairing shuffle plus}-\eqref{eqn:pairing shuffle minus} is defined in accordance with \eqref{eqn:notation ordering}). For $E$ and $F$ whose horizontal degrees do not add up to 0, we set $\langle E, F \rangle = 0$. The ``ct" in relations \eqref{eqn:pairing shuffle plus}-\eqref{eqn:pairing shuffle minus} should be interpreted as follows: in order for the formulas above to match \eqref{eqn:pairing ef}, we would need
$$
\text{ct} = \prod_{a=1}^n \left(q_{i_a}^{-1} - q_{i_a}\right)^{-1}
$$
However, to keep our formulas simple, we henceforth assume \eqref{eqn:pairing shuffle plus}-\eqref{eqn:pairing shuffle minus} to have $\text{ct} = 1$. One cannot explicitly write down the antipode in terms of shuffle elements without an in-depth discussion of completions, and since this will not be necessary, we will instead be content with the following analogue of \eqref{eqn:pairing shuffle minus}.

\medskip

\begin{lemma}
\label{lem:antipode pairing shuffle}

We have for all $F \in \CS_{-\bn}$, any ordering $i_1,\dots,i_n$ of $\bn$ and any $d_1,\dots,d_n \in \BZ$, the following formula  \footnote{We also have the analogous formula \begin{equation}
\label{eqn:antipode pairing shuffle analogy}
\Big \langle S^{-1}(E), f_{i_1,d_1} * \dots * f_{i_n,d_n} \Big \rangle =(-1)^n  \int_{|z_1| \gg  \dots \gg |z_n|} \frac {z_1^{d_1} \dots z_n^{d_n} E(z_1,\dots,z_n)}{\prod_{1\leq a < b \leq n} \zeta_{i_ai_b} \left(\frac {z_a}{z_b} \right)}
\end{equation}
which we will not need in the present paper, and thus will not prove.}
\begin{equation}
\label{eqn:antipode pairing shuffle}
\Big \langle e_{i_1,d_1} * \dots * e_{i_n,d_n}, S(F) \Big\rangle = (-1)^n \int_{|z_1| \ll \dots \ll |z_n|} \frac {z_1^{d_1} \dots z_n^{d_n} F(z_1,\dots,z_n)}{\prod_{1\leq a < b \leq n} \zeta_{i_bi_a} \left(\frac {z_b}{z_a} \right)}
\end{equation}

\end{lemma}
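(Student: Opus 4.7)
The plan is to reduce to the already-established formula \eqref{eqn:pairing shuffle minus} via the identity
$$\Big\langle E, S(F) \Big\rangle = \Big\langle S^{-1}(E), F \Big\rangle,$$
which follows from \eqref{eqn:antipode pairing} by replacing $(a,b)$ with $(S^{-1}(E), F)$. Applying $S^{-1}$ to \eqref{eqn:antipode e} and using \eqref{eqn:antipode phi}, one obtains $S^{-1}(e_i(z)) = -e_i(z)(\ph_i^+(z))^{-1}$; since $S^{-1}$ is an antihomomorphism, it follows that
$$S^{-1}\Big(e_{i_1}(x_1)\cdots e_{i_n}(x_n)\Big) = (-1)^n e_{i_n}(x_n)(\ph_{i_n}^+(x_n))^{-1} \cdots e_{i_1}(x_1)(\ph_{i_1}^+(x_1))^{-1}.$$

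Next, I would commute every $(\ph_{i_b}^+(x_b))^{-1}$ to the right past each $e_{i_a}(x_a)$ with $a<b$, using the identity $(\ph_j^+(y))^{-1} e_i(x) = e_i(x)(\ph_j^+(y))^{-1} \cdot \zeta_{ij}(x/y)/\zeta_{ji}(y/x)$ derived from \eqref{eqn:rel 1 loop}. This collects all Cartan elements on the right and produces the scalar prefactor $\prod_{a<b}\zeta_{i_ai_b}(x_a/x_b)/\zeta_{i_bi_a}(x_b/x_a)$. To eliminate the Cartans from the pairing with $F$, I invoke the bialgebra property \eqref{eqn:bialgebra 2} with $A = e_{i_n}(x_n)\cdots e_{i_1}(x_1)$ and $B = \prod_a (\ph_{i_a}^+(x_a))^{-1}$: a horizontal-degree count applied to \eqref{eqn:coproduct shuffle minus} shows that the only $\bm$ for which $\langle A, R_\bm \rangle \langle B, L_\bm \rangle$ can be non-zero (where $\Delta(F) = \sum_\bm L_\bm \otimes R_\bm$) is $\bm = \b0$, at which $L_{\b0} = 1$ and $R_{\b0} = F$. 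Since $\varepsilon((\ph_j^+(y))^{-1}) = 1$, one concludes $\langle AB, F \rangle = \langle A, F \rangle \cdot \varepsilon(B) = \langle A, F \rangle$.

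Finally, I would apply \eqref{eqn:pairing shuffle minus} to the reversed product $\langle e_{i_n}(x_n)\cdots e_{i_1}(x_1), F \rangle$ in generating-function form and relabel $y_a := x_{n+1-a}$: this reverses the contour to $|x_1| \ll \cdots \ll |x_n|$ and, by color-symmetry of $F$, converts the pairing into
$$\Big\langle e_{i_n}(x_n)\cdots e_{i_1}(x_1), F \Big\rangle = \frac{F(x_1,\dots,x_n)}{\prod_{a<b}\zeta_{i_ai_b}(x_a/x_b)}$$
expanded in $|x_1| \ll \cdots \ll |x_n|$. Multiplying by the scalar prefactor from the previous paragraph cancels the $\zeta_{i_ai_b}(x_a/x_b)$ factors, leaving precisely $(-1)^n/\prod_{a<b}\zeta_{i_bi_a}(x_b/x_a)$, and extracting the coefficient of $\prod_a x_a^{d_a}$ (equivalently, taking the contour integral) yields \eqref{eqn:antipode pairing shuffle}. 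The main obstacle is the careful bookkeeping of the two families of zeta factors---those produced by commuting Cartans past the $e$'s, and those arising in \eqref{eqn:pairing shuffle minus} after the contour reversal---and verifying that they combine exactly to cancel one pair; the rest is a direct consequence of the Hopf-pairing axioms.
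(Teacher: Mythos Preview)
Your proof is correct and follows essentially the same strategy as the paper's: reduce via $\langle E, S(F)\rangle = \langle S^{-1}(E), F\rangle$, compute $S^{-1}$ on the product of $e$'s using $S^{-1}(e_i(z)) = -e_i(z)(\ph_i^+(z))^{-1}$, commute the Cartan factors to the right, eliminate them via the bialgebra property and a horizontal-degree argument, and reduce to \eqref{eqn:pairing shuffle minus}. The only difference is that the paper works mode-by-mode (expanding $(\ph_i^+(z))^{-1}$ in components $\bar{\ph}_{i,k}^+$ and tracking the coefficients $\gamma_{ij}^{(m)}$ of $\zeta_{ij}(x/y)/\zeta_{ji}(y/x)$ explicitly), whereas you carry out the entire computation at the level of generating series; this is a presentational rather than a mathematical difference.
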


\medskip

\noindent Before we prove Lemma \ref{lem:antipode pairing shuffle}, let us note that formulas \eqref{eqn:pairing shuffle minus} and \eqref{eqn:antipode pairing shuffle} are sufficient to realize $\CS$ as the Drinfeld double of $\CS^\geq$ and $\CS^\leq$, i.e. for all $E \in \CS^\geq$ and $F \in \CS^\leq$
\begin{equation}
\label{eqn:drinfeld double shuffle}
FE = \Big \langle E_1, S(F_1) \Big \rangle E_2 F_2 \Big \langle E_3, F_3 \Big \rangle
\end{equation}
as well as the analogous formula
\begin{equation}
\label{eqn:drinfeld double shuffle reversed}
EF = \Big \langle E_1, F_1 \Big \rangle F_2 E_2 \Big \langle E_3, S(F_3) \Big \rangle
\end{equation}

\medskip

\begin{proof} \emph{of Lemma \ref{lem:antipode pairing shuffle}:} The antipode satisfies $S^{\pm 1}(ab) = S^{\pm 1}(b)S^{\pm 1}(a)$, hence
\begin{equation}
\label{eqn:antipode pairing shuffle temp}
\Big \langle e_{i_1,d_1} * \dots * e_{i_n,d_n}, S(F) \Big \rangle  \stackrel{\eqref{eqn:antipode pairing}}= \Big \langle S^{-1}(e_{i_n,d_n}) * \dots * S^{-1}(e_{i_1,d_1}), F \Big \rangle
\end{equation}
If we apply $S^{-1}$ to \eqref{eqn:antipode e}, we observe that for all $i \in I$ and $d \in \BZ$
$$
S^{-1}(e_i(z)) = - e_i(z)  \left(\ph^+_i(z)\right)^{-1} \quad \Rightarrow \quad S^{-1}(e_{i,d}) = - \sum_{k=0}^{\infty} e_{i,d-k} \bar{\ph}_{i,k}^+
$$
where we write $\left(\ph^+_i(z)\right)^{-1} = \sum_{k=0}^{\infty} \frac {\bar{\ph}^+_{i,k}}{z^k}$. Using formula \eqref{eqn:rel 1 loop}, we have
\begin{equation}
\label{eqn:phi bar past e}
\left(\ph_j^+(y)\right)^{-1} e_i(x) = e_i(x) \left(\ph_j^+(y)\right)^{-1} \frac {\zeta_{ij} \left(\frac xy \right)}{\zeta_{ji} \left( \frac yx \right)}  \Rightarrow  \bar{\ph}_{j,k}^+ e_{i,d} = \sum_{m=0}^k \gamma_{ij}^{(m)} e_{i,d+m} \bar{\ph}_{j,k-m}^+
\end{equation}
where the complex numbers $\gamma_{ij}^{(m)}$ are defined by
\begin{equation}
\label{eqn:expansion zeta}
\frac {\zeta_{ij} \left(\frac xy \right)}{\zeta_{ji} \left( \frac yx \right)} = \sum_{m=0}^{\infty} \gamma_{ij}^{(m)} \frac {x^m}{y^m}
\end{equation}
We can use \eqref{eqn:phi bar past e} to move $\bar{\ph}$'s to the right in \eqref{eqn:antipode pairing shuffle temp}, so the LHS of \eqref{eqn:antipode pairing shuffle} equals
$$
(-1)^n \sum_{\{m_{a,b}\geq 0\}_{1 \leq a < b \leq n}}^{k_1,\dots,k_n \geq 0} \prod_{1 \leq a < b \leq n} \gamma_{i_ai_b}^{(m_{a,b})} \left\langle \prod_{a=n}^1 e_{i_{a},d_{a} - k_{a}+m_{a,a+1} + \dots + m_{a,n}} \prod_{a=1}^n \bar{\ph}^+_{i_a,k_a-m_{1,a}-\dots-m_{a-1,a}} , F \right\rangle
$$
Because of \eqref{eqn:bialgebra 2} and the fact that $\Delta^{\text{op}}(F) = F \otimes 1$ plus terms whose second tensor factor has $\text{hdeg} < \b0$ (and thus has pairing 0 with any product of $\bar{\ph}$'s), we see that only the terms for which $k_a = m_{1,a}+\dots+m_{a-1,a}$ for all $a \in \{1,\dots,n\}$ survive in the formula above. We therefore conclude that the LHS of \eqref{eqn:antipode pairing shuffle} equals
$$
(-1)^n \sum_{\{m_{a,b} \geq 0\}_{1 \leq a < b \leq n}} \prod_{1 \leq a < b \leq n} \gamma_{i_ai_b}^{(m_{a,b})} \left \langle\prod_{a=n}^1 e_{i_a, d_a - m_{1,a} - \dots - m_{a-1,a}+ m_{a,a+1}+ \dots+m_{a,n}}  , F \right \rangle
$$
By \eqref{eqn:pairing shuffle minus}, each pairing in the sum above equals (recall that the product of $e$'s is ordered from $i_n$ to $i_1$)
$$
(-1)^n \int_{|z_n| \gg \dots \gg |z_1|} \frac {\prod_{a=1}^n z_a^{d_a - m_{1,a} - \dots - m_{a-1,a}+ m_{a,a+1}+ \dots+m_{a,n}} F(z_1,\dots,z_n)}{\prod_{1\leq a < b \leq n} \zeta_{i_ai_b} \left(\frac {z_a}{z_b} \right)}
$$
If we recall the definition of $\gamma_{i_ai_b}^{(m_{a,b})}$ in \eqref{eqn:expansion zeta}, we obtain precisely the RHS of \eqref{eqn:antipode pairing shuffle}.

\end{proof}

\subsection{Slopes of shuffle elements}
\label{sub:slope}

Recall the horizontal and vertical degrees of an element $X \in \CS$, as defined in \eqref{eqn:hdeg vdeg}. We henceforth fix
\begin{equation}
\label{eqn:the r}
\br \in \mathbb{Q}_{>0}^I
\end{equation}
(we would not lose anything by assuming $\br = (1,\dots,1)$ throughout the whole paper, except for Subsection \ref{sub:positive} where we need any $\br \in \BZ_{>0}^I$). The \textbf{naive slope} of $X$ is
\begin{equation}
\label{eqn:naive slope}
\frac {\vdeg X}{\br \cdot \hdeg X}
\end{equation}
where the dot product is defined in \eqref{eqn:dot product}. This notion is motivated by the following picture, in which a shuffle element $X$ is thought to lie at the lattice point $(\br \cdot \hdeg X, \vdeg X)$. For example, the slanted line drawn corresponds to shuffle elements of naive slope $\frac 23$.

\begin{picture}(100,210)(-120,-55)
\label{figure}

\put(0,0){\circle*{2}}\put(20,0){\circle*{2}}\put(40,0){\circle*{2}}\put(60,0){\circle*{2}}\put(80,0){\circle*{2}}\put(100,0){\circle*{2}}\put(120,0){\circle*{2}}\put(0,20){\circle*{2}}\put(20,20){\circle*{2}}\put(40,20){\circle*{2}}\put(60,20){\circle*{2}}\put(80,20){\circle*{2}}\put(100,20){\circle*{2}}\put(120,20){\circle*{2}}\put(0,40){\circle*{2}}\put(20,40){\circle*{2}}\put(40,40){\circle*{2}}\put(60,40){\circle*{2}}\put(80,40){\circle*{2}}\put(100,40){\circle*{2}}\put(120,40){\circle*{2}}\put(0,60){\circle*{2}}\put(20,60){\circle*{2}}\put(40,60){\circle*{2}}\put(60,60){\circle*{2}}\put(80,60){\circle*{2}}\put(100,60){\circle*{2}}\put(120,60){\circle*{2}}\put(0,80){\circle*{2}}\put(20,80){\circle*{2}}\put(40,80){\circle*{2}}\put(60,80){\circle*{2}}\put(80,80){\circle*{2}}\put(100,80){\circle*{2}}\put(120,80){\circle*{2}}\put(0,100){\circle*{2}}\put(20,100){\circle*{2}}\put(40,100){\circle*{2}}\put(60,100){\circle*{2}}\put(80,100){\circle*{2}}\put(100,100){\circle*{2}}\put(120,100){\circle*{2}}\put(0,120){\circle*{2}}\put(20,120){\circle*{2}}\put(40,120){\circle*{2}}\put(60,120){\circle*{2}}\put(80,120){\circle*{2}}\put(100,120){\circle*{2}}\put(120,120){\circle*{2}}

\put(60,-10){\vector(0,1){140}}
\put(-10,60){\vector(1,0){140}}

\put(60,60){\line(3,2){80}}
\put(60,60){\line(-3,-2){80}}

\put(47,135){$\vdeg$}
\put(135,57){$\br \cdot \text{hdeg}$}

\put(-40,10){\scalebox{4}{$\{$}}
\put(-40,90){\scalebox{4}{$\{$}}
\put(0,-15){\scalebox{4}{\rotatebox{270}{$\}$}}}
\put(80,-15){\scalebox{4}{\rotatebox{270}{$\}$}}}

\put(-53,15){$\CA^\leq$}
\put(-53,95){$\CA^\geq$}

\put(15,-43){$\CS^-$}
\put(88,-43){$\CS^+$}

\end{picture}

\noindent The elements $\ph_{i,d}^\pm$ are thought to lie on the vertical axis (since they have horizontal degree equal to 0). The following notion is a more subtle version of naive slope. 

\medskip

\begin{definition}
\label{def:slope}

(following \cite{N Shuffle, N R-matrix}) For any $\mu \in \BQ$, we will say that

\medskip

\begin{itemize}[leftmargin=*]

\item $E \in \CS_\bn$ has \textbf{slope} $\geq \mu$ if the following limit is finite for all $\b0 < \bm \leq \bn$
\begin{equation}
\label{eqn:slope e geq}
  \lim_{\xi \rightarrow 0} \frac {E(\xi z_{i1},\dots,\xi z_{im_i},z_{i,m_{i+1}},\dots,z_{in_i})}{\xi^{\mu(\br \cdot \bm)}}
\end{equation}

\medskip

\item $E \in \CS_\bn$ has \textbf{slope} $\leq \mu$ if the following limit is finite for all $\b0 < \bm \leq \bn$
\begin{equation}
\label{eqn:slope e leq}
  \lim_{\xi \rightarrow \infty} \frac {E(\xi z_{i1},\dots,\xi z_{im_i},z_{i,m_{i+1}},\dots,z_{in_i})}{\xi^{\mu(\br \cdot \bm)}}
\end{equation}

\medskip

\item $F \in \CS_{-\bn}$ has \textbf{slope} $\leq \mu$ if the following limit is finite for all $\b0 < \bm \leq \bn$
\begin{equation}
\label{eqn:slope f leq}
  \lim_{\xi \rightarrow 0} \frac {F(\xi z_{i1},\dots,\xi z_{im_i},z_{i,m_{i+1}},\dots,z_{in_i})}{\xi^{\mu(-\br \cdot \bm)}}
\end{equation}

\medskip

\item $F \in \CS_{-\bn}$ has \textbf{slope} $\geq \mu$ if the following limit is finite for all $\b0 < \bm \leq \bn$
\begin{equation}
\label{eqn:slope f geq}
  \lim_{\xi \rightarrow \infty} \frac {F(\xi z_{i1},\dots,\xi z_{im_i},z_{i,m_{i+1}},\dots,z_{in_i})}{\xi^{\mu(-\br \cdot \bm)}}
\end{equation}

\end{itemize}

\end{definition}

\medskip

\noindent We will say that a shuffle element has slope $> \mu$ (respectively $<\mu$) if it has slope $\geq \mu+\varepsilon$ (respectively $\leq \mu-\varepsilon$) for some small enough $\varepsilon \in \BQ_{>0}$, or in other words if the respective limit in \eqref{eqn:slope e geq}-\eqref{eqn:slope f geq} is zero for all $\b0 < \bm \leq \bn$. We will write
\begin{align}
&\CS^\pm_{\geq \mu}=\bigoplus_{\bn\in \nn} \CS_{\geq \mu|\pm \bn}=\bigoplus_{\bn\in \nn} \bigoplus_{d\in \BZ} \CS_{\geq \mu|\pm \bn,d} \label{eqn:shuffle slope geq} \\
&\CS^\pm_{\leq \mu}=\bigoplus_{\bn\in \nn} \CS_{\leq \mu|\pm \bn}=\bigoplus_{\bn\in \nn} \bigoplus_{d\in \BZ} \CS_{\leq \mu|\pm \bn,d} \label{eqn:shuffle slope leq}
\end{align}
for the subsets of elements of $\CS^\pm$ of slopes $\geq \mu$ and $\leq \mu$, respectively, as well as for their graded pieces. We will similarly write $\CS^\pm_{> \mu}$ and $\CS^\pm_{< \mu}$ etc for the analogous notions. 

\medskip

\begin{example}
\label{ex:1}

When $\fg = \fsl_2$, we have $I = \{\bullet\}$ and $d_{\bullet\bullet}=2$. Because the set $I$ has a single element, we will write $n,r,\dots$ instead of $\bn,\br,\dots$ We have
$$
\CS_{\pm n} = \BC[z_1^{\pm 1}, \dots, z_n^{\pm 1}]^\emph{sym}
$$
Letting $r=1$, we have
\begin{align*}
&\CS_{\geq \mu|n} = \CS_{\leq -\mu|-n} = (z_1\dots,z_n)^{\lceil \mu \rceil} \BC[z_1,\dots,z_n]^{\emph{sym}} \\
&\CS_{\leq \mu|n} = \CS_{\geq -\mu|-n} = (z_1\dots,z_n)^{\lfloor \mu \rfloor} \BC[z^{-1}_1,\dots,z^{-1}_n]^{\emph{sym}} 
\end{align*}
In particular, $\CS_{\geq 0|n} = \BC[z_1,\dots,z_n]^{\emph{sym}}$ and $\CS_{<0|-n} = z_1\dots z_n \BC[z_1,\dots,z_n]^{\emph{sym}}$.

\end{example}

\medskip

\noindent Recall the topological coproduct \eqref{eqn:coproduct shuffle plus}-\eqref{eqn:coproduct shuffle minus}, which we will denote by $\woo$ to indicate the fact that $\Delta(E)$ and $\Delta(F)$ are infinite sums for general $E \in \CS^+$ and $F\in \CS^-$.  It is easy to see that
\begin{align}
&E \in \CS^+_{\geq \mu} \quad \Leftrightarrow \quad \Delta(E) \in  (\text{naive slope} \geq \mu) \woo (\text{anything}) \label{eqn:naive slope 1} \\
&E \in \CS^+_{\leq \mu} \quad \Leftrightarrow \quad \Delta(E) \in (\text{anything}) \woo (\text{naive slope} \leq \mu) \label{eqn:naive slope 3} \\
&F \in \CS^-_{\leq \mu} \quad \Leftrightarrow \quad \Delta(F) \in (\text{naive slope} \leq \mu) \woo (\text{anything}) \label{eqn:naive slope 2} \\
&F \in \CS^-_{\geq \mu} \quad \Leftrightarrow \quad \Delta(F) \in (\text{anything}) \woo (\text{naive slope} \geq \mu) \label{eqn:naive slope 4}
\end{align}
as well as the analogous notions with $\geq,\leq$ replaced by $>,<$. Since the shuffle product is additive in $(\br \cdot \text{hdeg}, \text{vdeg})$, the property of having naive slope $\geq \mu$ (resp. $\leq \mu$) is preserved by multiplication. Therefore, due to \eqref{eqn:naive slope 1}-\eqref{eqn:naive slope 4}, the property of having slope $\geq \mu$ (resp. $\leq \mu$) is also preserved by multiplication. In other words,  $\CS^\pm_{\geq \mu}$ and $\CS^\pm_{\leq \mu}$ are subalgebras of $\CS^\pm$ (as are $\CS^\pm_{> \mu}$ and $\CS^\pm_{< \mu}$, for the same reason). 

\medskip

\subsection{Slope subalgebras}
\label{sub:slope subalgebras}

Because of the $\bm = \bn$ case in \eqref{eqn:slope e geq}-\eqref{eqn:slope f geq}, we note that a shuffle element can simultaneously have slope $\geq \mu$ and $\leq \mu$ only if it has naive slope $\mu$. If this happens, we will say that the shuffle element in question has slope $\mu$. This property is preserved by the shuffle product, so we will write
\begin{equation}
\label{eqn:slope subalgebra}
\CB_{\mu}^\pm = \mathop{\bigoplus_{(\bn,d) \in \nn \times \BZ}}_{d = \pm \mu(\br \cdot \bn)} \CB_{\mu|\pm \bn}
\end{equation}
for the subalgebra of slope $\mu$ elements, which will be called a \textbf{slope subalgebra}. These algebras are the building blocks of shuffle algebras, in the sense that multiplication induces isomorphisms of vector spaces
\begin{equation}
\label{eqn:factorization 1}	
\bigotimes^{\rightarrow}_{\mu \in \BQ} \CB^\pm_{\mu} \xrightarrow{\sim} \CS^\pm \xleftarrow{\sim} \bigotimes^{\leftarrow}_{\mu \in \BQ} \CB^\pm_{\mu}  
\end{equation}
where $\rightarrow$ (respectively $\leftarrow$) means that we take the tensor product in increasing (respectively decreasing) order of $\mu$ \footnote{As vector spaces, $\otimes^{\rightarrow}_{\mu \in \BQ} \CB^\pm_{\mu}$ and $\otimes^{\leftarrow}_{\mu \in \BQ} \CB^\pm_{\mu}$ have bases $\otimes^{\rightarrow}_{\mu \in \BQ} b^\pm_{\mu, s_\mu}$ and $\otimes^{\leftarrow}_{\mu \in \BQ} b^\pm_{\mu, s_\mu}$ (respectively), where $b^\pm_{\mu,s_\mu}$ run over homogeneous bases of $\CB^\pm_{\mu}$ such that $b^\pm_{\mu,s_\mu} = 1$ for all but finitely many $\mu \in \BQ$.}. The first isomorphism in \eqref{eqn:factorization 1} is proved word for word as in \cite[Theorem 1.1]{N R-matrix}, and the second one is analogous; we thus leave the demonstration of the isomorphisms above as an exercise to the reader. To summarize, the isomorphisms in \eqref{eqn:factorization 1} state that any shuffle element can be uniquely written as a sum of products of shuffle elements in slope subalgebras, in either increasing or decreasing order of the slope. 

\medskip

\noindent There are natural analogues of \eqref{eqn:factorization 1} for all $\nu \in \BQ$, as follows
\begin{align}
&\text{ } \ \bigotimes^{\rightarrow }_{\mu \in [\nu, \infty)} \CB^\pm_{\mu} \xrightarrow{\sim} \CS^\pm_{\geq \nu} \xleftarrow{\sim} \bigotimes^{ \leftarrow}_{\mu \in [\nu, \infty)} \CB^\pm_{\mu} \label{eqn:factorization 3} \\
&\bigotimes^{\rightarrow}_{\mu \in (-\infty,\nu]} \CB^\pm_{\mu} \xrightarrow{\sim} \CS^\pm_{\leq \nu} \xleftarrow{\sim} \bigotimes^{\leftarrow}_{\mu \in (-\infty,\nu]} \CB_\mu^\pm \label{eqn:factorization 4}
\end{align}
as well as versions for $\CS^\pm_{> \nu}$ and $\CS^\pm_{< \nu}$, in which we replace the half-closed intervals by open intervals (see \cite[Proposition 3.14]{N R-matrix} for the proof of the first isomorphism in \eqref{eqn:factorization 4} in an closely related context; the rest are analogous). The following result is proved just like \cite[Proposition 3.12]{N R-matrix}, so we leave it as an exercise to the reader.

\medskip

\begin{proposition}
\label{prop:pair slopes}

The $\rightarrow$ factorization in \eqref{eqn:factorization 1} respects the pairing \eqref{eqn:pairing shuffle}, in the sense that for all collections of shuffle elements $\{E_\mu \in \CB^+_\mu, F_\mu \in \CB^-_\mu\}_{\mu \in \BQ}$ (almost all of which are 1) we have
\begin{equation}
\label{eqn:pair slopes}
\left \langle \prod_{\mu \in \BQ}^{\rightarrow} E_\mu, \prod_{\mu \in \BQ}^{\rightarrow} F_\mu \right \rangle = \prod_{\mu \in \BQ} \langle E_\mu, F_\mu \rangle 
\end{equation}

\end{proposition}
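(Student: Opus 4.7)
The plan is to induct on the number $N$ of slopes $\mu \in \BQ$ at which $E_\mu$ or $F_\mu$ is nontrivial. The base case $N \leq 1$ is immediate. For the inductive step, let $\mu_* \in \BQ$ be the smallest such slope and split $E = E_{\mu_*} \cdot E'$, $F = F_{\mu_*} \cdot F'$ with $E' = \prod_{\mu > \mu_*}^{\rightarrow} E_\mu \in \CS^+_{>\mu_*}$ and $F' = \prod_{\mu > \mu_*}^{\rightarrow} F_\mu \in \CS^-_{>\mu_*}$. The bialgebra axiom \eqref{eqn:bialgebra 2} gives
$$\Big\langle E_{\mu_*} \cdot E', \; F \Big\rangle = \sum \Big\langle E_{\mu_*}, F_{(2)} \Big\rangle \Big\langle E', F_{(1)} \Big\rangle$$
in Sweedler notation for $\Delta(F) = \sum F_{(1)} \otimes F_{(2)}$.

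The heart of the argument is a degree comparison isolating the contributions compatible with the slope decomposition. Since $E_{\mu_*} \in \CB^+_{\mu_*}$ has vertical degree $\mu_*(\br \cdot \bn_{\mu_*})$ where $\bn_{\mu_*} = \hdeg E_{\mu_*}$, the pairing $\langle E_{\mu_*}, F_{(2)} \rangle$ can only be nonzero when the $\CS^-$-part of $F_{(2)}$ has horizontal degree $-\bn_{\mu_*}$ and vertical degree $-\mu_*(\br \cdot \bn_{\mu_*})$. On the other hand, $F \in \CS^-_{\geq \mu_*}$, so \eqref{eqn:naive slope 4} ensures each $F_{(2)}$ has naive slope $\geq \mu_*$, giving $\vdeg F_{(2)} \leq -\mu_*(\br \cdot \bm)$ with $\bm = -\hdeg F_{(2)}$; the pairing constraint saturates this bound. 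Factoring $\Delta(F) = \Delta(F_{\mu_*}) \cdot \Delta(F')$ and writing $F_{(2)} = F_{\mu_*}^{(2)} \cdot {F'}^{(2)}$, the strict inequality $\mu > \mu_*$ controlling $F'$ forces $\vdeg {F'}^{(2)}$ to lie \emph{strictly} below the naive slope $\mu_*$ bound whenever the $\CS^-$-horizontal degree of ${F'}^{(2)}$ is nonzero---contradicting saturation. Thus only terms with $\hdeg {F'}^{(2)} = \b0$ in $\CS^-$ survive, and by \eqref{eqn:leading delta minus} these reduce ${F'}^{(2)}$ to its leading-term contribution $\prod_j \prod_a \ph^-_j(z_{ja})$ (pure Cartan, with ${F'}^{(1)} = F'$).

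Once this reduction is in hand, the Cartan factors commute past $E_{\mu_*}$ via \eqref{eqn:shuffle minus commute}, and a parallel application of \eqref{eqn:bialgebra 1} to $\langle E_{\mu_*}, F_{\mu_*}^{(2)} \cdot \prod \ph^- \rangle$ couples the Cartan output of $\Delta(F')$ against the Cartan contributions of $\Delta(E_{\mu_*})$ via \eqref{eqn:pairing ph}. After this bookkeeping, the sum collapses to
$$\Big\langle E_{\mu_*}, F_{\mu_*} \Big\rangle \cdot \Big\langle E', F' \Big\rangle,$$
and the inductive hypothesis on the second factor completes the proof. The main obstacle will be the careful bookkeeping of the Cartan scalars---one must verify that the $\zeta$-denominators in the coproduct formulas \eqref{eqn:coproduct shuffle plus}--\eqref{eqn:coproduct shuffle minus} and the explicit Cartan pairing \eqref{eqn:pairing ph} combine to contribute no residual correction, so that the factorization is exact. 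As indicated in the statement, this strategy parallels the proof of \cite[Proposition 3.12]{N R-matrix}.
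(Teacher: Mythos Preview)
The paper does not supply its own proof of this proposition; it simply declares the argument to be identical to \cite[Proposition 3.12]{N R-matrix} and leaves it as an exercise. Your inductive strategy---peel off the smallest active slope $\mu_*$, apply the bialgebra axiom \eqref{eqn:bialgebra 2}, and use slope/degree constraints to isolate the leading contributions---is exactly the approach of that reference, so you are on the right track.

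That said, your sketch has a genuine gap. You correctly argue that the strict slope $>\mu_*$ of $F'$ forces the $\CS^-$-part of ${F'}^{(2)}$ to vanish, but you then jump to the conclusion without explaining why, among all terms of $\Delta(F_{\mu_*})$, only the one with $F_{\mu_*}^{(1)}=1$ and $F_{\mu_*}^{(2)}=F_{\mu_*}$ survives. Your saturation argument does not rule out $\hdeg F_{\mu_*}^{(1)}\neq\b0$ (a priori there is no reason $\hdeg E_{\mu_*}$ and $-\hdeg F_{\mu_*}$ should agree). The missing step is a \emph{second} slope argument applied to the remaining pairing: expand $\langle E', F_{\mu_*}^{(1)}\cdot F'\rangle$ via \eqref{eqn:bialgebra 1}, note that $E'\in\CS^+_{>\mu_*}$ forces first tensor factors of $\Delta(E')$ to have naive slope $>\mu_*$ by \eqref{eqn:naive slope 1}, while $F_{\mu_*}^{(1)}$ has naive slope $\leq\mu_*$ by \eqref{eqn:naive slope 2}; these cannot pair unless $F_{\mu_*}^{(1)}$ has horizontal degree $\b0$, i.e.\ is the constant $1$.

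A smaller point: your description of the Cartan bookkeeping is off. Relation \eqref{eqn:shuffle minus commute} is an algebra commutation relation and plays no role here, and the Cartan--Cartan pairing \eqref{eqn:pairing ph} does not actually enter. The correct mechanism is cleaner than you suggest: for any $E\in\CS^+$ the second tensor factors of $\Delta(E)$ lie in $\CS^+$ (with no Cartan part), so $\langle E, G\cdot\ph^-\rangle = \langle E,G\rangle\,\varepsilon(\ph^-)$ for any Cartan $\ph^-$. Combined with the counit axiom $(\text{Id}\otimes\varepsilon)\circ\Delta=\text{Id}$, this shows the Cartan tails contribute exactly $1$, with no $\zeta$-factor residue to chase.
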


\medskip

\noindent Note that $\langle E,F\rangle = 0$ unless the shuffle elements $E$ and $F$ have opposite degrees in $\zz \times \BZ$. Therefore, the subalgebras $\CS^+_{\geq \nu}$ and $\CS^-_{<\nu}$ pair trivially, in the sense that
\begin{equation}
\label{eqn:orthogonal}
\Big \langle E, F \Big \rangle = \varepsilon(E) \varepsilon(F), \qquad \forall E \in \CS_{\geq \nu}^+, F \in \CS_{< \nu}^-
\end{equation}
where $\varepsilon$ denotes the counit. The analogous property holds for $\CS^+_{< \nu}$ and $\CS^-_{\geq \nu}$ etc. 

\medskip

\begin{proposition}
\label{prop:easy}

For any $\mu \in \BQ$ and $\bn \in \nn$, the vector space $\CB_{\mu|\pm \bn}$ is finite-dimensional. Moreover, if $\br \in \BZ_{>0}^I$, the assignment
\begin{equation}
\label{eqn:automorphism}
\sigma : \CS^\pm \rightarrow \CS^\pm, \qquad X(z_{i1},\dots,z_{in_i}) \mapsto X(z_{i1},\dots,z_{in_i}) \prod_{i \in I} \prod_{a=1}^{n_i} z_{ia}^{\pm r_i}
\end{equation}
is an algebra automorphism that takes $\CB_\mu^\pm$ to $\CB_{\mu + 1}^\pm$.

\end{proposition}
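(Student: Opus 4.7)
My plan is to treat the two claims separately. For the finite-dimensionality of $\CB^+_{\mu|\bn}$, I will apply the slope conditions only in the special case $\bm = \bs^i$, where we scale a single variable $z_{i1}$ of some color $i$ with $n_i \geq 1$. Any $X \in \CB^+_{\mu|\bn}$ can be written as $X = \rho/D$ with $\rho$ a color-symmetric Laurent polynomial and $D = \prod_{\{i \neq j\}}^{\mathrm{unord}} \prod_{a,b}(z_{ia} - z_{jb})$. Upon the substitution $z_{i1} \mapsto \xi z_{i1}$, the factors of $D$ involving $z_{i1}$ are precisely the $(\xi z_{i1} - z_{jb})$ for $j \neq i$ and $1 \leq b \leq n_j$, contributing $O(1)$ as $\xi \to 0$ and $O(\xi^{N_i})$ as $\xi \to \infty$, where $N_i = \sum_{j \neq i} n_j$; all other factors of $D$ are unaffected. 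Writing $\rho = \sum_k z_{i1}^k \rho_k$ and letting $k_{\min}$, $k_{\max}$ denote the minimum and maximum $z_{i1}$-degrees appearing in $\rho$, the leading $\xi$-powers of $X(\xi z_{i1}, z_{\mathrm{else}})$ are therefore $\xi^{k_{\min}}$ at $\xi = 0$ and $\xi^{k_{\max} - N_i}$ at $\xi = \infty$.

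The slope conditions \eqref{eqn:slope e geq} and \eqref{eqn:slope e leq} at $\bm = \bs^i$ then force $k_{\min} \geq \lceil \mu r_i \rceil$ and $k_{\max} \leq \lfloor \mu r_i + N_i \rfloor$, confining the $z_{i1}$-degrees in $\rho$ to at most $N_i + 1$ consecutive integers. By the color-symmetry of $\rho$, the same range constrains the degree of every variable $z_{ia}$, and iterating over all $i$ with $n_i \geq 1$ shows that $\rho$ lies in a finite-dimensional space of Laurent polynomials with bounded per-variable degrees. Hence $\CB^+_{\mu|\bn}$ is finite-dimensional, and an entirely analogous argument handles $\CB^-_{\mu|-\bn}$. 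The main thing to be careful about is the leading-order $\xi$-asymptotics of $X = \rho/D$, since one needs that the extremal $\xi$-coefficients of $D$ do not vanish generically in the $z_{\mathrm{else}}$ variables---this is clear because those coefficients are products of nonzero linear factors.

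For the second claim, assuming $\br \in \BZ_{>0}^I$, the monomial $\prod_{i,a} z_{ia}^{\pm r_i}$ is color-symmetric, so $\sigma$ is well-defined on $\CV$ and leaves the prescribed denominator in \eqref{eqn:big shuffle} intact. Multiplicativity follows because this monomial is invariant under the color-respecting permutations used in the symmetrization of \eqref{eqn:mult}, so it passes through the symmetrization of a shuffle product; the inverse of $\sigma$ is given by the analogous map with the sign of the exponent flipped. The integrality hypothesis ensures $\sigma(z_{i1}^d) = z_{i1}^{d \pm r_i}$ remains one of the generators of $\CS^\pm$, so $\sigma$ restricts to an algebra automorphism of $\CS^\pm$. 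For $X \in \CS^\pm_{\pm \bn}$, scaling the first $m_i$ variables of each color by $\xi$ in both sides of the equality
$$
\sigma(X) = X \cdot \prod_{i,a} z_{ia}^{\pm r_i}
$$
introduces the extra factor $\xi^{\pm \br \cdot \bm}$ on the right-hand side, so dividing by $\xi^{(\mu+1)(\pm \br \cdot \bm)}$ recovers $X/\xi^{\mu(\pm \br \cdot \bm)}$ up to a $\xi$-independent scalar. Thus the slope-$\mu$ conditions on $X$ translate into slope-$(\mu+1)$ conditions on $\sigma(X)$, giving $\sigma(\CB^\pm_\mu) = \CB^\pm_{\mu+1}$ after also applying the argument to $\sigma^{-1}$.
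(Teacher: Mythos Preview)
Your proof is correct and follows essentially the same approach as the paper, which merely sketches the argument by saying that the opposing slope conditions bound the powers of each $z_{ia}$ in the numerator, and that the slope shift under $\sigma$ is straightforward. You have simply filled in the details the paper left as an exercise: the single-variable ($\bm = \bs^i$) slope conditions already suffice to bound the per-variable degrees of $\rho$, and the multiplicativity of $\sigma$ follows from the color-symmetry of the monomial factor.
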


\medskip

\noindent The Proposition is an easy exercise: finite-dimensionality follows from the fact that having slope $\leq \mu$ and $\geq \mu$ places opposing bounds on the powers of $z_{ia}$ that can appear in the numerator of shuffle elements in $\CB_{\mu|\pm \bn}$. The fact that \eqref{eqn:automorphism} is an algebra automorphism which ``shifts" slopes by 1 is straightforward.

\medskip

\subsection{Half subalgebras}
\label{sub:half subalgebras}

The subalgebra $\CB_\mu^\pm$ consists of shuffle elements whose $(\br \cdot \text{hdeg}, \text{vdeg})$ lies on the ray of slope $\mu$ in the half-plane determined by $\pm$ the horizontal axis (see Figure \ref{figure}). It is customary to define 
\begin{equation}
\CB^\pm_{\infty} = \BC[\ph_{i,d}^\pm]_{i \in I, d \geq 0}
\end{equation}
which corresponds to the two vertical rays. By analogy with \eqref{eqn:s geq}-\eqref{eqn:s leq}, define
\begin{align}
&\CS^{\geq}_{\geq \mu} = \CS^{+}_{\geq \mu} \otimes \CB^+_{\infty} \\
&\CS^{\leq}_{\geq \mu} = \CB^-_{\infty} \otimes  \CS^{-}_{\geq \mu}
\end{align}
Because the coproduct \eqref{eqn:coproduct shuffle plus}-\eqref{eqn:coproduct shuffle minus} is coassociative (itself a straightforward exercise, which we leave to the reader), we may drop the word ``naive" from \eqref{eqn:naive slope 1}-\eqref{eqn:naive slope 4}; see \cite[Proposition 3.4]{N R-matrix} for the same statement in a very closely related context. The previous sentence translates in the following properties
\begin{align}
&\Delta(\CS^\geq_{\geq \mu}) \subset  \CS^\geq_{\geq \mu} \ \woo \ \CS^+ \label{eqn:slope 1} \\
&\Delta(\CS^+_{\leq \mu}) \subset \CS^\geq \ \woo \ \CS^+_{\leq \mu} \label{eqn:slope 3} \\
&\Delta(\CS^-_{\leq \mu}) \subset  \CS^-_{\leq \mu} \ \woo \ \CS^\leq \label{eqn:slope 2} \\
&\Delta(\CS^\leq_{\geq \mu}) \subset \CS^- \ \woo \ \CS^\leq_{\geq \mu} \label{eqn:slope 4}
\end{align}
The analogous formulas hold with the bottom indices $\geq$, $\leq$ replaced by $>$, $<$. When comparing the triangular decomposition $\CS = \CS^{\geq} \otimes \CS^{\leq}$ with the factorizations \eqref{eqn:factorization 1}, one obtains a factorization of $\CS$ as an infinite product of rays starting and ending with slope either $+\infty$ or $-\infty$. In the present Subsection, we will define a similar factorization starting and ending at any ray of slope $\nu \in \BQ$. To this end, let
\begin{align}
&\CA^{\geq}_\nu = \CS^-_{<\nu}  \otimes \CB_\infty^+ \otimes \CS^+_{\geq \nu}=  \bigotimes^{\leftarrow}_{\mu \in (-\infty,\nu)} \CB_\mu^- \otimes \bigotimes^{\leftarrow}_{\mu \in [\nu,\infty]} \CB_\mu^+ \label{eqn:a geq} \\
&\CA^{\leq}_\nu = \CS^+_{<\nu}  \otimes \CB_\infty^- \otimes \CS^-_{\geq \nu}= \bigotimes^{\leftarrow}_{\mu \in (-\infty,\nu)} \CB_\mu^+ \otimes \bigotimes^{\leftarrow}_{\mu \in [\nu,\infty]} \CB_\mu^-  \label{eqn:a leq}
\end{align}

\medskip

\begin{proposition}
\label{prop:a are algebras}

For any $\nu \in \BQ$, $\CA^{\geq}_\nu$ and $\CA^{\leq}_\nu$ are subalgebras of $\CS$.

\end{proposition}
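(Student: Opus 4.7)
My plan is to reduce Proposition \ref{prop:a are algebras} to a single "commutation" statement and then handle that statement by using the Drinfeld double formula together with the slope properties of the coproduct and the orthogonality of the pairing.

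First, I would observe that for $\CA^{\geq}_\nu = \CS^-_{<\nu} \otimes \CB^+_\infty \otimes \CS^+_{\geq\nu}$, each of the three tensor factors is already a subalgebra of $\CS$ (the first by the remark following \eqref{eqn:naive slope 4} applied to the $<\nu$-analog, the second by its definition, and the third by the same remark). The Cartan subalgebra $\CB^+_\infty$ commutes past any shuffle element via \eqref{eqn:shuffle plus commute}-\eqref{eqn:shuffle minus commute}, and these relations preserve both horizontal and vertical degree of the shuffle element, hence preserve slope. Consequently, the subalgebra claim reduces to showing that for any $E \in \CS^+_{\geq\nu}$ and $F \in \CS^-_{<\nu}$, the product $EF$ lies in $\CA^{\geq}_\nu$.

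To analyze $EF$, I would apply the reversed Drinfeld double formula \eqref{eqn:drinfeld double shuffle reversed}:
$$EF = \langle E_1, F_1 \rangle \, F_2 E_2 \, \langle E_3, S(F_3) \rangle,$$
summed over the Sweedler components of $\Delta^{(2)}(E)$ and $\Delta^{(2)}(F)$. The goal is then to argue that the only surviving terms have $F_2$ with purely-negative part in $\CS^-_{<\nu}$ and $E_2$ with purely-positive part in $\CS^+_{\geq\nu}$, so that after moving Cartan factors to the middle (again via \eqref{eqn:shuffle plus commute}-\eqref{eqn:shuffle minus commute}) and collapsing $\CB^-_\infty$ into $\CB^+_\infty$ using $\ph^+_{i,0}\ph^-_{i,0}=1$, the result lands in $\CS^-_{<\nu} \cdot \CB^+_\infty \cdot \CS^+_{\geq\nu} = \CA^{\geq}_\nu$.

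The surviving-term analysis proceeds via the slope constraints. By iterating \eqref{eqn:slope 1} and using coassociativity, I can arrange $E_1 \in \CS^\geq_{\geq\nu}$; by a dual argument using Proposition \ref{prop:pair slopes} together with the slope factorization \eqref{eqn:factorization 3}, one similarly controls the rightmost Sweedler component $E_3$. The analogous statements for $F$ via \eqref{eqn:slope 2}, \eqref{eqn:slope 4} and \eqref{eqn:factorization 4} give slope-$<\nu$ control on $F_1$ and $F_3$ modulo Cartan. The orthogonality \eqref{eqn:orthogonal} then implies that $\langle E_1, F_1 \rangle$ and $\langle E_3, S(F_3)\rangle$ reduce to products of counits applied to the purely-shuffle parts, eliminating all but the ``Cartan-only end'' contributions. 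These surviving terms are precisely what I need to conclude. The argument for $\CA^{\leq}_\nu$ is identical after interchanging the roles of $+$ and $-$.

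The main obstacle will be Paragraph 3: the explicitly stated slope-coproduct relations \eqref{eqn:slope 1}-\eqref{eqn:slope 4} provide slope control on only one factor of the coproduct at a time, whereas I need simultaneous control on both the leftmost and rightmost Sweedler components in the three-fold coproduct. Extracting the dual bound will require either a separate lemma derived from Proposition \ref{prop:pair slopes} (in the spirit of the factorization argument behind \cite[Proposition 3.12]{N R-matrix}) or a direct manipulation of \eqref{eqn:coproduct shuffle plus}-\eqref{eqn:coproduct shuffle minus} using the two factorizations in \eqref{eqn:factorization 3}-\eqref{eqn:factorization 4}. Once this two-sided slope bound is in place, the remainder of the proof is a routine application of the Drinfeld double identity combined with the commutation relations for Cartan elements.
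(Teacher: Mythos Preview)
Your overall strategy matches the paper's, but you have manufactured an obstacle that is not actually there. In the Drinfeld double identity
\[
EF = \langle E_1, F_1 \rangle \, F_2 E_2 \, \langle E_3, S(F_3) \rangle,
\]
you only need slope control on the \emph{leftmost} Sweedler components. By \eqref{eqn:slope 1} and \eqref{eqn:slope 2} (iterated once via coassociativity), one has $E_1 \in \CS^\geq_{\geq \nu}$ and $F_1 \in \CS^-_{<\nu}$, so the orthogonality \eqref{eqn:orthogonal} gives $\langle E_1, F_1 \rangle = \varepsilon(E_1)\varepsilon(F_1)$ for \emph{every} term. Applying $\varepsilon$ to the first tensor factor collapses $\Delta^{(2)}$ to $\Delta$, and the identity becomes simply
\[
EF = F_1 E_1 \, \langle E_2, S(F_2) \rangle
\]
with Sweedler notation now for the single coproduct. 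Here $\langle E_2, S(F_2)\rangle$ is a scalar, so it is irrelevant where $E_2, F_2$ live; meanwhile $F_1 \in \CS^-_{<\nu}$ and $E_1 \in \CS^\geq_{\geq \nu}$ again by \eqref{eqn:slope 1}--\eqref{eqn:slope 2}, so $F_1 E_1 \in \CS^-_{<\nu} \cdot \CB^+_\infty \cdot \CS^+_{\geq \nu} = \CA^\geq_\nu$ and you are done. There is no need to control $E_3$ or $F_3$, and hence no need for the ``dual bound'' you describe in your last paragraph. The paper's proof is exactly this.

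A smaller correction: your claim that the Cartan commutation relations \eqref{eqn:shuffle plus commute}--\eqref{eqn:shuffle minus commute} ``preserve both horizontal and vertical degree of the shuffle element'' is false for the $p_{j,u}$'s --- relation \eqref{eqn:p shuffle} shows that commuting $p_{j,u}$ past $X$ produces the extra term $X \cdot (\text{symmetric polynomial of degree } u)$, which raises vertical degree. What is true (and what the paper records as Claim \ref{claim:preserved}) is that multiplication by a color-symmetric \emph{polynomial} preserves both $\CS^+_{\geq \nu}$ and $\CS^-_{<\nu}$; this is what you actually need to reorder Cartan elements past shuffle elements while staying inside $\CA^\geq_\nu$.
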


\medskip

\begin{proof} We will only prove the statement about $\CA^{\geq}_\nu$, as the one about $\CA^{\leq}_\nu$ is analogous. As a vector space, $\CA^{\geq}_{\nu}$ is defined as the tensor product of three subalgebras:
\begin{equation}
\label{eqn:three}
\CS^-_{<\nu}  \quad \text{and} \quad \BC[\ph_{i,d}^+]_{i \in I, d\geq 0} \quad \text{and} \quad \CS^+_{\geq \nu}
\end{equation}
To show that this tensor product is itself a subalgebra of $\CS$ (i.e. closed under multiplication), we need to show that an arbitrary product of elements $x$ and $y$ from the above three subalgebras can be ``ordered", i.e. expressed as a sum of products of elements from the subalgebras \eqref{eqn:three}, in the given order. This is clear when one of $x$ and $y$ is from $\BC[\ph_{i,d}^+]$ and the other one is from $\CS^+_{\geq \nu}$ or $\CS^-_{<\nu}$, because of \eqref{eqn:k shuffle}, \eqref{eqn:p shuffle} and the following easy exercise (which we leave to the reader).

\medskip

\begin{claim}
\label{claim:preserved}

For any $\nu \in \BQ$ and $\bn \in \nn$, $\CS_{\geq \nu|\bn}$ and $\CS_{< \nu|-\bn}$ are modules over 
\begin{equation}
\label{eqn:color symmetric preserved}
\CP_{\bn} = \BC[z_{i1},\dots,z_{in_i}]^{\emph{sym}}_{i \in I}
\end{equation}
i.e. they are preserved by multiplication with any color-symmetric polynomial.

\end{claim}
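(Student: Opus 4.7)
The plan is to split the claim into two orthogonal parts: preservation of the slope condition under pointwise multiplication by $p \in \CP_\bn$, and preservation of the defining shuffle-algebra condition.

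For the slope part, I would just unwind Definition \ref{def:slope}. Because $p$ has only non-negative powers of the variables, the substitution $z_{ia} \mapsto \xi z_{ia}$ for $1 \le a \le m_i$ (for any $\b0 < \bm \le \bn$) yields
\[
\lim_{\xi \to 0} p(\xi z_{i1}, \dots, \xi z_{im_i}, z_{i,m_i+1}, \dots, z_{in_i}) = p(\b0, z_{i,m_i+1}, \dots, z_{in_i}),
\]
a finite quantity. Multiplying by the finite limit that defines slope $\ge \nu$ for $E$ (respectively, slope $\le \nu - \varepsilon$ for $F$, after fixing $\varepsilon > 0$ with $F \in \CS_{\le \nu - \varepsilon \, | \, -\bn}$) produces the corresponding finite limit for $pE$ and $pF$. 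The same routine computation works if one prefers to verify slope via the $\xi \to \infty$ half of Definition \ref{def:slope}.

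For the shuffle-algebra part, one needs $\CS^\pm$ to be closed under pointwise multiplication by $\CP_\bn$ on its graded pieces. In the types where $\CS^\pm$ has an explicit description — finite type via \cite{E1, E2, FO, NT} and simply-laced Kac-Moody via \cite{N Loop} — this is immediate: the cutting-out conditions amount to color-symmetry (preserved since $p$ is color-symmetric) together with the wheel conditions \eqref{eqn:wheel}, which are vanishing conditions on the numerator at specific variable substitutions and are therefore inherited by $p \cdot \rho$ from $\rho$. The denominator pattern in \eqref{eqn:big shuffle} is also untouched, since $p$ is a polynomial.

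The only step where I expect genuine friction is the analogous closure for arbitrary Kac-Moody $\fg$, where $\CS^\pm$ is defined only as the image of $\widetilde{\Upsilon}^\pm$ with no known explicit characterization. There the direct vanishing argument is unavailable, and one would instead need an intrinsic argument — for instance, translating multiplication by a color-symmetric power sum into the commutator action of $p_{j,u}$ via \eqref{eqn:p shuffle}, and then bootstrapping to arbitrary $p \in \CP_\bn$ by the standard generation of symmetric functions by power sums. For the types actually needed elsewhere in the paper, the explicit descriptions already suffice to close the argument cleanly.
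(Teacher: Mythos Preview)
The paper leaves Claim \ref{claim:preserved} as an exercise, so there is no explicit proof to compare against. Your slope argument is correct and is essentially the whole content: since $p \in \CP_\bn$ has only non-negative exponents, the limits \eqref{eqn:slope e geq} and \eqref{eqn:slope f leq} for $pE$ and $pF$ are just the corresponding limits for $E$ and $F$ multiplied by the finite quantity $p(\b0, z_{i,m_i+1},\dots,z_{in_i})$.

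Your treatment of the closure of $\CS^\pm$ under $\CP_\bn$ is fine in the explicitly-described types, but the suggested fix for general $\fg$ via \eqref{eqn:p shuffle} has a gap. The commutator with $p_{j,u}$ multiplies $X$ by the \emph{specific} combination $\sum_{i \in I} (q^{ud_{ij}}-q^{-ud_{ij}})\sum_a z_{ia}^u$, and varying $j$ only produces the linear span of these; recovering each single-color power sum $\sum_a z_{ia}^u$ individually requires inverting the matrix $(q^{ud_{ij}}-q^{-ud_{ij}})_{i,j \in I}$, which is not obviously nonsingular for all $u$ and all Cartan data.

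The clean, type-independent argument (which the paper itself spells out later, in the proof of Lemma \ref{lem:is module}) is to observe directly that pointwise multiplication by a single-color power sum $\rho_i^{(u)} = \sum_a z_{ia}^u$ is a \emph{derivation} of the shuffle product \eqref{eqn:mult}: inside the symmetrization one simply splits $\sum_{a=1}^{n_i+n_i'} z_{ia}^u$ as $\sum_{a \le n_i} + \sum_{a > n_i}$. Since $\CS^\pm$ is generated under $*$ by the $z_{j1}^d$, and this derivation sends $z_{i1}^d \mapsto z_{i1}^{d+u}$ and $z_{j1}^d \mapsto 0$ for $j \ne i$, it preserves $\CS^\pm$. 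As the $\rho_i^{(u)}$ generate $\CP_\bn$ as an algebra, closure under arbitrary $p \in \CP_\bn$ follows.
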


\medskip

\noindent It remains to show that if $E \in \CS^+_{\geq \nu}$ and $F \in \CS^-_{<\nu}$, then $EF$ can be expressed as a sum of products of elements from the three subalgebras \eqref{eqn:three}, in the given order. However, to this end, we note that \eqref{eqn:slope 1} and \eqref{eqn:slope 2} imply that 
\begin{align}
&\Delta^{(2)} (E) \in \CS^\geq_{\geq \nu} \ \woo \ \CS^\geq \ \woo \ \CS^+ \label{eqn:delta 2 e} \\
&\Delta^{(2)} (F) \in \CS^-_{< \nu} \ \woo \ \CS^\leq \ \woo \ \CS^\leq \label{eqn:delta 2 f}
\end{align}
By \eqref{eqn:orthogonal}, any first tensor of $\Delta^{(2)}(E)$ pairs trivially with any first tensor of $\Delta^{(2)}(F)$. Thus, \eqref{eqn:drinfeld double shuffle reversed} implies
$$
EF = F_1 E_1 \Big \langle E_2, S(F_2) \Big \rangle
$$
By \eqref{eqn:slope 1} and \eqref{eqn:slope 2}, we have $E_1 \in \CS^\geq_{\geq \nu}$ and $F_1\in \CS^-_{< \nu}$ in the formula above, as we were required to prove.

\end{proof}

\medskip

\subsection{Triangular decompositions}
\label{sub:triangular decompositions}

Compare the following result with \eqref{eqn:double shuffle}.

\medskip

\begin{proposition}
\label{prop:a triangular}

For any $\nu \in \BQ$, the multiplication map induces isomorphisms
\begin{equation}
\label{eqn:a triangular}
\CA^{\geq}_\nu \otimes \CA^{\leq}_\nu \xrightarrow{\sim} \CS \xleftarrow{\sim} \CA^{\leq}_\nu \otimes \CA^{\geq}_\nu
\end{equation}

\end{proposition}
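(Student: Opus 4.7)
The plan is to prove the first isomorphism $\CA^\geq_\nu \otimes \CA^\leq_\nu \xrightarrow{m} \CS$ by combining a graded dimension count (which reduces the problem to surjectivity of $m$) with an explicit reordering of subspace products inside $\CS$. The second isomorphism will follow from an entirely analogous argument, exchanging the roles of \eqref{eqn:drinfeld double shuffle} and \eqref{eqn:drinfeld double shuffle reversed} where appropriate.

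For the dimension count, I would combine the triangular decomposition \eqref{eqn:double shuffle} with the slope factorizations \eqref{eqn:factorization 1} applied to both $\CS^+$ and $\CS^-$, writing $\CS$ as the tensor product (via multiplication) of $\CB^+_\infty$, $\CB^-_\infty$, and a single copy of $\CB^\pm_\mu$ for each $\mu \in \BQ$. The definitions \eqref{eqn:a geq}-\eqref{eqn:a leq}, combined with the refined slope factorizations \eqref{eqn:factorization 3}-\eqref{eqn:factorization 4}, similarly realize $\CA^\geq_\nu \otimes \CA^\leq_\nu$ as the tensor product of exactly the same factors, merely reorganized according to whether $\mu < \nu$ or $\mu \geq \nu$. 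By Proposition \ref{prop:easy}, each slope subalgebra is finite-dimensional in each piece of $\zz \times \BZ$; consequently both sides have equal and finite graded dimensions on each piece of $\zz \times \zz \times \BZ$, so any surjective graded linear map between them is automatically an isomorphism.

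For surjectivity, the goal is to show $\CA^\geq_\nu \cdot \CA^\leq_\nu = \CS$ as subspaces. The plan is to reorder the generic product $\CS^+_{<\nu} \cdot \CS^+_{\geq\nu} \cdot \CB^+_\infty \cdot \CB^-_\infty \cdot \CS^-_{<\nu} \cdot \CS^-_{\geq\nu}$ (which spans $\CS$ by the triangular and slope factorizations) into the target arrangement $\CS^-_{<\nu} \cdot \CB^+_\infty \cdot \CS^+_{\geq\nu} \cdot \CS^+_{<\nu} \cdot \CB^-_\infty \cdot \CS^-_{\geq\nu}$ via three kinds of elementary moves: (a) swapping two same-signed slope pieces via \eqref{eqn:factorization 1}, which is already a subspace equality; (b) commuting a Cartan piece $\CB^\pm_\infty$ past a slope piece, which by \eqref{eqn:shuffle plus commute}-\eqref{eqn:shuffle minus commute} only multiplies the shuffle element by a color-symmetric polynomial and therefore, by Claim \ref{claim:preserved}, preserves its slope subspace; and (c) the Drinfeld double swaps across the threshold $\nu$. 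For the last, the proof of Proposition \ref{prop:a are algebras} already establishes $\CS^+_{\geq\nu} \cdot \CS^-_{<\nu} \subseteq \CS^-_{<\nu} \cdot \CS^\geq_{\geq\nu}$; the parallel analysis applied to $FE$ with $F \in \CS^-_{\geq\nu}$ and $E \in \CS^+_{<\nu}$ via \eqref{eqn:drinfeld double shuffle} yields $\CS^-_{\geq\nu} \cdot \CS^+_{<\nu} \subseteq \CS^+_{<\nu} \cdot \CS^\leq_{\geq\nu}$, exploiting the slope-pairing vanishing of Proposition \ref{prop:pair slopes} to trivialize the boundary Sweedler pairings.

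The main obstacle will be organizing the sequence of swaps so that the intermediate expressions stay inside the target subspace $\CA^\geq_\nu \cdot \CA^\leq_\nu$. My plan is to first use Cartan commutations to place the Cartan factors at their intended positions, then apply the two Drinfeld double swaps in (c) to interchange the opposite-signed slope pieces across the $\nu$-threshold, and finally clean up with same-sign slope swaps in (a). Combined with the dimension count, the resulting inclusion $\CS \subseteq \CA^\geq_\nu \cdot \CA^\leq_\nu$ completes the proof.
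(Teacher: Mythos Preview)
Your dimension-count reduction to surjectivity has a genuine gap: the graded pieces of $\CS$ (and of $\CA^\geq_\nu \otimes \CA^\leq_\nu$) with respect to the only grading preserved by the multiplication map, namely $\zz \times \BZ$, are \emph{not} finite-dimensional. Already the Cartan piece $\CB^+_\infty = \BC[\kappa_i^{\pm 1}, \ph^+_{i,d}]$ is infinite-dimensional in degree $(\b0,0)$, and more generally for $|\bn|\geq 2$ the pieces $\CS_{\bn,d}$ are infinite-dimensional (for $\fg=\fsl_2$, $\CS_{2,0}$ contains all $(z_1/z_2)^k+(z_2/z_1)^k$). The finer ``$\zz\times\zz\times\BZ$'' grading you invoke is not respected by $m$, since the Drinfeld double relations mix the $\pm$ horizontal degrees. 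So surjectivity does not give injectivity for free, and a separate argument is needed.

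The paper supplies exactly such an argument: for $F'\in\CS^-_{<\nu}$, $E\in\CS^\geq$, $F''\in\CS^-_{\geq\nu}$, one uses \eqref{eqn:drinfeld double shuffle} and the leading term of $\Delta^{(2)}(F')$ from \eqref{eqn:leading delta minus} to obtain a triangular relation $F'EF'' = EF'F'' + (\text{terms of strictly larger }|\text{hdeg}|\text{ or vdeg in the }\CS^-\text{ part})$. Since the products $EF'F''$ are linearly independent by \eqref{eqn:double shuffle} and \eqref{eqn:**}, so are the $F'EF''$, which is injectivity. Your surjectivity sketch via reordering is in a different spirit from the paper's (which proceeds by reverse induction on $|\hdeg F|$, applying \eqref{eqn:drinfeld double shuffle reversed} once per step), and could likely be made to work, but note that step (b) requires care: commuting $\CB^-_\infty$ past $\CS^-_{<\nu}$ involves multiplication by negative powers of the $z$-variables, to which Claim \ref{claim:preserved} does not apply.
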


\medskip

\begin{proof} We will only prove the fact that the left-most multiplication map
$$
m : \CA^{\geq}_\nu \otimes \CA^{\leq}_\nu \rightarrow \CS
$$
is an isomorphism, as the case of the right-most map is analogous. To prove that $m$ is surjective, we must show that for any $E \in \CS^\geq$ and $F \in \CS^-$, we have
\begin{equation}
\label{eqn:permutation}
EF \in \CS^-_{< \nu} \otimes \CS^{\geq} \otimes \CS^-_{\geq \nu}
\end{equation}
We will do so by reverse induction on $|\hdeg F| \in - \BN$. By \eqref{eqn:factorization 1}-\eqref{eqn:factorization 4}, we have
\begin{equation}
\label{eqn:**}
\CS^- = \CS^-_{<\nu} \otimes \CS^-_{\geq \nu}
\end{equation}
so it suffices to prove the induction step of \eqref{eqn:permutation} for $F \in \CS^-_{<\nu}$. However, relation \eqref{eqn:drinfeld double shuffle reversed} means that we may rewrite the shuffle element $EF$ from \eqref{eqn:permutation} as
\begin{equation}
\label{eqn:the sum}
\Big \langle E_1, F_1 \Big \rangle F_2 E_2 \Big \langle E_3, S(F_3) \Big \rangle
\end{equation}
By applying \eqref{eqn:leading delta minus} twice, we see that there are two types of terms $F_2 E_2$ above:

\medskip

\begin{itemize}[leftmargin=*]

\item those corresponding to the term $1 \otimes F(z_{ia}) \otimes \prod_{i,a} \ph^-_i(z_{ia})$ from $\Delta^{(2)}(F)$. By Claim \ref{claim:preserved}, the corresponding summand in \eqref{eqn:the sum} lies in  $\CS^-_{< \nu} \otimes \CS^\geq$.

\medskip

\item those corresponding to tensors with $|\hdeg F_2| > |\hdeg F|$; in this case, formula \eqref{eqn:drinfeld double shuffle} allows us to rewrite the corresponding terms in \eqref{eqn:the sum} as a linear combination of $E'F'$ with $|\hdeg F'| \geq |\hdeg F_2|$, which lie in $ \CS^-_{< \nu} \otimes \CS^{\geq} \otimes \CS^-_{\geq \nu}$ by the induction hypothesis.

\end{itemize}

\medskip 

\noindent Now that we showed that $m$ is surjective, let us also prove that it is injective. To this end, consider any $F' \in \CS^-_{< \nu}$, $E \in \CS^{\geq}$, $F'' \in\CS^-_{\geq \nu}$. Formula \eqref{eqn:drinfeld double shuffle} gives us
$$
F'EF'' = \Big \langle E_1, S(F'_1) \Big \rangle E_2 F'_2 F''\Big \langle E_3, F'_3 \Big \rangle
$$
However, by \eqref{eqn:leading delta minus} we have $\Delta^{(2)}(F') = 1 \otimes F' \otimes \kappa_{\hdeg F'} + \dots$, where the ellipsis denotes terms with middle tensor factor having $|\text{hdeg}|$ greater than $|\hdeg F'|$, or the same hdeg but greater vdeg. In the latter case, the aforementioned terms of greater vertical degree will still be in $\CS^-_{<\nu}$ by Claim \ref{claim:preserved}. Therefore, we have
$$
F'EF'' = EF'F'' + \dots
$$
where the ellipsis denotes terms which either have greater $|\text{hdeg}|$ than $F'F''$,  or the same hdeg but greater vdeg. Since there are no non-trivial linear relations between the products $EF'F''$ (according to \eqref{eqn:**}), there are no non-trivial linear relations between the products $F'EF''$, which implies the injectivity of $m$.

\end{proof}

\medskip

\noindent For any Kac-Moody $\fg$ and $\nu \in \BQ$, we expect that there exist topological coproducts
\begin{align}
&\Delta_\nu : \CA^{\geq}_{\nu} \rightarrow \CA^{\geq}_{\nu} \  \woo \ \CA^{\geq}_{\nu} 	\label{eqn:coproduct intro plus} \\
&\Delta_\nu : \CA^{\leq}_{\nu} \rightarrow \CA^{\leq}_{\nu} \  \woo \ \CA^{\leq}_{\nu} 	\label{eqn:coproduct intro minus}
\end{align}
which generalize the Drinfeld-Jimbo coproducts on $\UUaffgl$ for finite type $\fg$ (see Proposition \ref{prop:slope affine}). Such coproducts were defined for $\fg$ of affine type $A$ (meaning that the corresponding algebra $\UU$ is a quantum toroidal algebra) in \cite{N Tale}, but they are not known in general \footnote{We refer to \cite{Da} for a glimpse of how complicated the Drinfeld-Jimbo coproduct is in terms of the Drinfeld new presentation, even when $\fg$ is of finite type.}. In simply laced types, it is expected that the sought-for coproducts \eqref{eqn:coproduct intro plus}-\eqref{eqn:coproduct intro minus} correspond to the geometric coproducts defined using $K$-theoretic stable envelopes in \cite{OS}, just as our slope subalgebras $\CB_{\mu}$ are expected to match the analogous constructions of \loccit With this extra structure, the decompositions \eqref{eqn:a triangular} are expected to be Drinfeld doubles.

\medskip 

\subsection{The finite type case}
\label{sub:finite type}

An important role in the present paper will be played by the case when $\nu = 0$. We remark that the subalgebras
\begin{equation}
\label{eqn:do not depend}
\CS_{\geq 0}^\pm, \CS_{\leq 0}^\pm, \CS_{> 0}^\pm, \CS_{< 0}^\pm \text{ and } \CB_0^\pm 
\end{equation}
do not depend on the choice of $\br \in \BQ_{>0}^I$ in \eqref{eqn:the r}. Thus, neither do the subalgebras
\begin{align}
&\CA^\geq = \CA^\geq_0 = \CS^-_{<0}  \otimes \CB_\infty^+ \otimes \CS^+_{\geq 0} \label{eqn:slope zero plus} \\
&\CA^\leq = \CA^\leq_0 = \CS^+_{<0}  \otimes \CB_\infty^- \otimes \CS^-_{\geq 0} \label{eqn:slope zero minus}
\end{align}
of $\CS$. The motivation for the study of these subalgebras stems from the following. 

\medskip

\begin{proposition}
\label{prop:slope affine}

If $\fg$ is of finite type, then the isomorphism
\begin{equation}
\label{eqn:xi}
\Xi : \UUaff \xrightarrow{\Phi} \UU \xrightarrow{\Upsilon} \CS
\end{equation}
sends the subalgebra $\UUaffg$ onto $\CA^\geq$ and the subalgebra $\UUaffl$ onto $\CA^\leq$.

\end{proposition}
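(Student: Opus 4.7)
The plan is to establish the two inclusions $\Xi(\UUaffg) \subseteq \CA^\geq$ and $\CA^\geq \subseteq \Xi(\UUaffg)$; the statement about $\UUaffl$ and $\CA^\leq$ then follows by the evident symmetry $e \leftrightarrow f$, $+ \leftrightarrow -$. For the first inclusion it suffices to check that each Drinfeld--Jimbo generator $e_i$ and $\kappa_i^{\pm 1}$ ($i \in \wI$) of $\UUaffg$ maps into $\CA^\geq$. For $i \in I$ this is immediate: $\Xi(e_i) = \Upsilon(e_{i,0})$ is a constant function in a single variable, hence lies in $\CB^+_0 \subset \CS^+_{\geq 0}$, while $\Xi(\kappa_i) = \kappa_i \in \CB^+_\infty$. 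The relation $\prod_{i \in \wI} \kappa_i^{\eta_i} = 1$ forces $\kappa_0$ to be a monomial in the remaining $\kappa_i^{\pm 1}$, so $\Xi(\kappa_0) \in \CB^+_\infty$ as well.

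The crucial step is to show $\Xi(e_0) \in \CS^-_{<0}$. By \eqref{eqn:iterated commutator e}, $\Phi(e_0)$ is a nested $q$-commutator of $\{f_{i_s,0}\}_{1 \leq s \leq k}$ and $f_{j,1}$ with $\bs^{i_1} + \dots + \bs^{i_k} + \bs^j = \theta$ the highest root of $\fg$, so $\Upsilon(\Phi(e_0))$ has horizontal degree $-\theta$ and vertical degree $1$. To verify the slope condition \eqref{eqn:slope f leq}, I would compute $\Upsilon(\Phi(e_0))$ explicitly by iterating the shuffle product \eqref{eqn:mult}: for $\fsl_2$ the result reduces to $z_1$, and in general the nested $q$-commutator structure is precisely what forces the numerator of the resulting rational function to vanish to sufficient order under each partial specialization $z_{ia} \mapsto \xi z_{ia}$, mirroring the cancellations that produce Drinfeld--Serre type identities. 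A careful induction on $k$ should show that $\Upsilon(\Phi(e_0))$ in fact lies in the slope subalgebra $\CB^-_{\mu}$ for $\mu = -1/(\br \cdot \theta) < 0$, hence in $\CS^-_{<0}$.

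For the reverse inclusion $\CA^\geq \subseteq \Xi(\UUaffg)$, I would appeal to Beck's PBW theorem \cite{B, Da2, Da3}: $\UUaffg$ admits a PBW basis indexed by positive affine roots $\wDelta^+$, partitioned into real roots $\{\alpha + n\delta : \alpha \in \Delta^+, n \geq 0\}$, real roots $\{-\alpha + n\delta : \alpha \in \Delta^+, n \geq 1\}$, and imaginary roots $\{n\delta\}_{n \geq 1}$ together with $\{\kappa_i^{\pm 1}\}_{i \in I}$. Under $\Xi$, these three families correspond respectively to the slope factorizations \eqref{eqn:factorization 3}--\eqref{eqn:factorization 4} of $\CS^+_{\geq 0}$, $\CS^-_{<0}$, and $\CB^+_\infty$, since the shuffle-theoretic root vector for $\pm\alpha + n\delta$ has slope $\pm n/(\br \cdot \alpha)$, which is $\geq 0$ or $<0$ according to the sign of $\pm\alpha$. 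Assembling these three pieces via the triangular decomposition \eqref{eqn:slope zero plus} of $\CA^\geq$ then yields $\Xi(\UUaffg) = \CA^\geq$.

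The main obstacle is the slope bound on $\Phi(e_0)$: while the degree arithmetic makes the conclusion intuitively plausible, a fully rigorous argument requires either the explicit iterated shuffle calculation sketched above, or the identification of $\Upsilon(\Phi(e_0))$ with the standard affine root vector for $\delta - \theta$ inside the slope subalgebra $\CB^-_{-1/(\br \cdot \theta)}$; either route then immediately places it in $\CS^-_{<0}$ and dovetails with the PBW correspondence needed for the second inclusion.
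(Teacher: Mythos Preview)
Your proposal has the right overall shape but leaves the key step incomplete, and the paper resolves it by a cleaner structural argument that you should know about.

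For the inclusion $\Xi(e_0)\in\CS^-_{<0}$, you propose an explicit inductive computation of the iterated shuffle product, or identifying $\Upsilon(\Phi(e_0))$ with a specific root vector in $\CB^-_{-1/(\br\cdot\theta)}$, but you do not carry either out. The paper bypasses all computation by proving the closure statement
\[
R\in\CS^-_{<0}\ \Longrightarrow\ [f_{i,0},R]_q\in\CS^-_{<0}
\]
directly from the coproduct characterization \eqref{eqn:naive slope 2} of slope $<0$. Indeed, $\Delta(R)=1\otimes R+(\text{vdeg}>0)\woo(\text{anything})$ and $\Delta(f_{i,0})=1\otimes f_{i,0}+f_{i,0}\otimes\kappa_i^{-1}+(\text{vdeg}>0)\woo(\text{anything})$, so in $\Delta([f_{i,0},R]_q)$ the only potentially troublesome term is $f_{i,0}\otimes\bigl(\kappa_i^{-1}R-q^{(\hdeg R,-\bs^i)}R\kappa_i^{-1}\bigr)$, and this vanishes by \eqref{eqn:k shuffle}. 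Since $f_{j,1}\mapsto z_{j1}\in\CS^-_{<0}$ trivially, iterating gives $\Xi(e_0)\in\CS^-_{<0}$ with no explicit shuffle calculation and no need to pin down the exact slope subalgebra.

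For the reverse inclusion, your PBW approach would require proving that Beck's root vectors land in the correct slope pieces, which is nontrivial and close to circular. The paper avoids this entirely: once $\Xi(\UUaffg)\subseteq\CA^\geq$ and (by symmetry) $\Xi(\UUaffl)\subseteq\CA^\leq$ are established, the triangular decompositions $\UUaff=\UUaffg\otimes\UUaffl$ and $\CS=\CA^\geq\otimes\CA^\leq$ (the latter being \eqref{eqn:a triangular}) together with the fact that $\Xi$ is a vector space isomorphism force both inclusions to be equalities. This is a two-line argument and makes your PBW matching unnecessary.
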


\medskip

\begin{proof} It is clear that 
\begin{align}
&\Xi(e_i) = (z_{i1}^0)^+ \in \CB^+_0 \subset \CA^\geq \\
&\Xi(\kappa_i) = \kappa_i \in \CB_{\infty}^+ \subset \CA^\geq
\end{align}
for all $i \in I$.  In the next paragraph, we will show that
\begin{equation}
\label{eqn:want}
\Xi(e_0) \in \CA^\geq
\end{equation}
Once we have done this, we will have shown that 
\begin{equation}
\label{eqn:first inclusion}
\Xi(\UUaffg) \subseteq \CA^\geq
\end{equation}
By analogy, one can also prove that
\begin{equation}
\label{eqn:second inclusion}
\Xi(\UUaffl) \subseteq \CA^\leq
\end{equation}
where $\UUaffl \subset \UUaff$ is generated by $\{f_i,\kappa_i^{-1}\}_{i \in \wI}$. However, the facts that
$$
\UUaff = \UUaffg \otimes \UUaffl \quad \text{and} \quad \CS = \CA^\geq \otimes \CA^\leq
$$
(the former being the well-known triangular decomposition of quantum affine algebras, and the latter being \eqref{eqn:a triangular}) together with the fact that $\Xi$ is an isomorphism, would imply that the inclusions in \eqref{eqn:first inclusion} and \eqref{eqn:second inclusion} are actually equalities.

\medskip

\noindent It remains to prove \eqref{eqn:want}. Because of \eqref{eqn:iterated commutator e}, this boils down to showing that
\begin{equation}
\label{eqn:anto}
R \in \CS^-_{<0} \quad \Rightarrow \quad \left[ f, R \right]_q \in \CS^-_{<0}
\end{equation}
where $f = (z_{i1}^0)^-$ for any given $i \in I$. Formula \eqref{eqn:naive slope 2} implies that
$$
\Delta(R) = 1 \otimes R + (\text{vdeg} > 0) \woo (\text{anything})
$$
while either \eqref{eqn:coproduct f} or \eqref{eqn:coproduct shuffle minus} imply that
$$
\Delta(f) \in 1 \otimes f + f \otimes \kappa_i^{-1} + (\text{vdeg} > 0) \woo (\text{anything})
$$
Therefore, we conclude that
$$
\Delta \left( f R- q^{(\hdeg R, -\bs^i)} R f \right) \in 1 \otimes \left( f R- q^{(\hdeg R, -\bs^i)} R f \right) + 
$$
$$
+ f \otimes \Big[ \kappa_i^{-1}R - q^{(\hdeg R, -\bs^i)}  R\kappa_i^{-1}\Big] + (\text{vdeg} > 0) \woo (\text{anything})
$$
By \eqref{eqn:k shuffle}, the term in square brackets vanishes. Then \eqref{eqn:naive slope 2} implies that 
$$
\left[ f, R \right]_q = f R - q^{(\hdeg R, -\bs^i)} R f
$$
lies in $\CS^-_{<0}$, as required.

\end{proof}

\medskip

\noindent Note that Proposition \ref{prop:slope affine} proves Conjecture 2.25 of \cite{NT2}. 

\medskip

\subsection{Subalgebras and generators}
\label{sub:gen}

Let us consider the subalgebra
\begin{equation}
\label{eqn:the inclusion}
\mathring{\CS}_{\geq 0}^+ \subseteq \CS_{\geq 0}^+
\end{equation}
generated by $\{z_{i1}^d\}_{i \in I, d \geq 0}$, and its graded summands $\mathring{\CS}_{\geq 0|\bn} \subseteq \CS_{\geq 0|\bn}$.

\medskip

\begin{proposition}
\label{prop:non-negative}

For $\fg$ of finite type, the inclusion \eqref{eqn:the inclusion} is an equality.

\end{proposition}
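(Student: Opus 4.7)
The inclusion $\mathring{\CS}^+_{\geq 0} \subseteq \CS^+_{\geq 0}$ is immediate, since each generator $z_{i1}^d$ with $d \geq 0$ has naive slope $d/r_i \geq 0$ and $\CS^+_{\geq 0}$ is a subalgebra. For the reverse inclusion, the plan is to combine Proposition \ref{prop:slope affine} with the slope factorization \eqref{eqn:factorization 3}. By Proposition \ref{prop:slope affine}, $\Xi(\UUaffg) = \CA^{\geq}$; intersecting with $\CS^+$ via the triangular decomposition in \eqref{eqn:slope zero plus} yields $\CS^+_{\geq 0} = \Upsilon^+(\UUaffg \cap \UUp)$. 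Setting $B = \langle e_{i,d} \rangle_{i \in I,\, d \geq 0} \subseteq \UUp$ so that $\Upsilon^+(B) = \mathring{\CS}^+_{\geq 0}$ by definition, the problem reduces to proving $B = \UUaffg \cap \UUp$. The inclusion $B \subseteq \UUaffg \cap \UUp$ is immediate: clearly $B \subseteq \UUp$, while $e_{i,d} \in \UUaffg$ for $d \geq 0$ because $\Upsilon^+(e_{i,d}) = z_{i1}^d \in \CS^+_{\geq 0} \subseteq \CA^{\geq} = \Xi(\UUaffg)$.

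For the reverse inclusion, equivalently $\CS^+_{\geq 0} \subseteq \mathring{\CS}^+_{\geq 0}$, I would fix $\br = (1,\ldots,1)$, permitted since the subalgebras in \eqref{eqn:do not depend} do not depend on the choice of $\br$. Then \eqref{eqn:factorization 3} gives $\CS^+_{\geq 0} = \bigotimes^{\leftarrow}_{\mu \in \BQ_{\geq 0}} \CB^+_\mu$ as a vector space, so it suffices to show $\CB^+_\mu \subseteq \mathring{\CS}^+_{\geq 0}$ for every $\mu \geq 0$. For the base case $\mu = 0$, the subalgebra $\CB^+_0$ consists of vertical-degree-zero shuffle elements, and identifies under $\Upsilon^+$ with the positive part $U_q^+(\fg)$ of the finite-type quantum group; thus $\CB^+_0$ is generated by $\{z_{i1}^0\}_{i \in I}$ and lies in $\mathring{\CS}^+_{\geq 0}$. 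For $\mu = k \in \BZ_{>0}$, the automorphism $\sigma$ from Proposition \ref{prop:easy} sends $\CB^+_\mu$ to $\CB^+_{\mu+1}$ and $z_{i1}^d$ to $z_{i1}^{d+1}$, so iterating the base case gives $\CB^+_k \subseteq \mathring{\CS}^+_{\geq 0}$.

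The hard part I anticipate is treating the non-integer slopes $\mu \in \BQ_{\geq 0} \setminus \BZ$, for which the subalgebras $\CB^+_\mu$ can already be non-trivial in type $A_2$: for instance, $\CB^+_{1/2 \mid (1,1),\,1} = \BC \cdot z_{11} z_{21}/(z_{11}-z_{21})$, and the explicit identity
\begin{equation*}
\frac{z_{11}\,z_{21}}{z_{11} - z_{21}} \;=\; \frac{1}{1 - q^2}\,\Big( z_{11}^{\,0} * z_{21}^{\,1} \;-\; q\, z_{21}^{\,1} * z_{11}^{\,0} \Big)
\end{equation*}
witnesses such a fractional-slope element as lying in $\mathring{\CS}^+_{\geq 0}$. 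In general, I would handle these by induction on $|\bn|$, using the coproduct \eqref{eqn:coproduct shuffle plus} together with its slope compatibility \eqref{eqn:slope 1} to peel off a single colour of variable and reduce to strictly smaller horizontal degree, and invoking the wheel conditions \eqref{eqn:wheel} (available because $\fg$ is of finite type) to tightly control the numerator polynomials of shuffle elements. This inductive step is the technical heart of the argument and is the place where the finite-type hypothesis is used in an essential way.
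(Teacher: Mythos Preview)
Your reduction in the first paragraph is exactly what the paper does: via Proposition~\ref{prop:slope affine} and the triangular decomposition \eqref{eqn:slope zero plus}, the question becomes whether $\Phi(\UUaffg)\cap\UUp$ is generated by $\{e_{i,d}\}_{i\in I,\,d\geq 0}$. At that point the paper simply invokes this as a known fact about quantum affine algebras (\cite[Section~2.3]{HJ}), which ultimately rests on Beck's PBW theory and the convex ordering of affine roots. That single citation finishes the proof; no further shuffle-algebra work is needed.

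Your second and third paragraphs take a genuinely different route, attempting to prove $\CS^+_{\geq 0}\subseteq\mathring{\CS}^+_{\geq 0}$ directly through the slope factorization \eqref{eqn:factorization 3}. The integer-slope part is correct (and your $A_2$ computation is right), but the non-integer case is a real gap, as you acknowledge. Your proposed induction on $|\bn|$ via the coproduct does not obviously close: applying \eqref{eqn:coproduct shuffle plus} to an element of $\CB^+_\mu$ produces first tensor factors in $\CS^{\geq}_{\geq\mu}$, not a priori in $\mathring{\CS}^+_{\geq 0}$, so the inductive hypothesis does not immediately apply; and the wheel conditions \eqref{eqn:wheel} constrain specializations, not the slope decomposition, so it is unclear how they would drive the induction. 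Carrying this through would essentially amount to reproving the relevant part of Beck's PBW basis inside the shuffle algebra, which is possible but considerably more work than the one-line citation the paper uses.
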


\medskip

\begin{proof}

It is well-known that $\Phi(\UUaffg) \cap \UUp$ is generated by $\{e_{i,d}\}_{i \in I, d \geq 0}$ (see \cite[Section 2.3]{HJ}). Passing this statement under the isomorphism $\Upsilon$ implies 
$$
\CA^\geq \cap \CS^+ \stackrel{\eqref{eqn:a geq}}= \CS_{\geq 0}^+
$$
is generated by $\{z_{i1}^d\}_{i \in I, d \geq 0}$, thus implying that $\mathring{\CS}_{\geq 0}^+ = \CS_{\geq 0}^+$. 

\end{proof}

\medskip

\noindent The inclusion \eqref{eqn:the inclusion} is strict even for $\fg$ of affine type (for example, imaginary root vectors in the horizontal subalgebra of $U_q(L \widehat{\fsl}_n)$ lie in $\CB_0^+$ but not in $\oCS^+_{\geq 0}$, see \cite{N Toroidal}). 

\medskip

\begin{proposition}
\label{prop:weaker}

For any Kac-Moody Lie algebra $\fg$ and any $\bn \in \nn$, we have
\begin{equation}
\label{eqn:span of positive}
\CS_{\geq 0|\bn} \subseteq \emph{span } \Big\{z_{i_11}^{d_1} * \dots * z_{i_n1}^{d_n}\Big\}_{\bs^{i_1}+\dots+\bs^{i_n} = \bn}^{d_1,\dots,d_n \geq -N}
\end{equation}
for large enough $N$ (which may depend on $\bn$).

\end{proposition}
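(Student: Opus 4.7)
The plan is to combine the slope factorization \eqref{eqn:factorization 3} with the finite-dimensionality of slope subalgebras from Proposition \ref{prop:easy} and the slope-shifting automorphism $\sigma$ of \eqref{eqn:automorphism}. Fix $\br \in \BZ_{>0}^I$ throughout; this choice does not affect the subalgebra $\CS^+_{\geq 0|\bn}$ by Subsection \ref{sub:finite type}, but it is needed for $\sigma$ to be well-defined as an automorphism.

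First, invoke \eqref{eqn:factorization 3} to write any $E \in \CS^+_{\geq 0|\bn}$ as a finite linear combination of shuffle products $B_{\mu_1} * \dots * B_{\mu_k}$, where $B_{\mu_j} \in \CB^+_{\mu_j|\bn_j}$, with slopes $0 \leq \mu_1 < \dots < \mu_k$ and $\bn_1+\dots+\bn_k = \bn$. Since, by associativity, a shuffle product of shuffle products of generators $z_{i1}^d$ is itself a single shuffle product of such generators, it suffices to prove that for every $\bn' \leq \bn$ and every $\mu \geq 0$, the slope subalgebra $\CB^+_{\mu|\bn'}$ lies in the span of shuffle products $z_{i_11}^{d_1} * \dots * z_{i_{|\bn'|}1}^{d_{|\bn'|}}$ with $d_\ell \geq -N_{\bn'}$, for some constant $N_{\bn'}$ depending only on $\bn'$.

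To produce such a bound, observe that $\sigma$ acts on generators by $\sigma(z_{i1}^d) = z_{i1}^{d+r_i}$ and, by Proposition \ref{prop:easy}, restricts to an isomorphism $\CB^+_{\mu|\bn'} \xrightarrow{\sim} \CB^+_{\mu+1|\bn'}$. The space $\CB^+_{\mu|\bn'}$ is nonzero only when $\mu (\br \cdot \bn') \in \BZ$, so up to the $\sigma$-action the slopes $\mu \geq 0$ relevant to a fixed $\bn'$ split into finitely many equivalence classes, represented by $\mu_0 \in [0,1) \cap \frac{1}{\br \cdot \bn'}\BZ$. For each such representative $\mu_0$, the space $\CB^+_{\mu_0|\bn'}$ is finite-dimensional by Proposition \ref{prop:easy}; pick a basis, and since $\CS^+$ is generated by $\{z_{i1}^d\}_{i \in I, d \in \BZ}$ by Theorem \ref{thm:quantum to shuffle}, each basis element admits some finite expression as a linear combination of shuffle products of these generators. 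Let $N_0(\mu_0, \bn')$ be a lower bound on all exponents $d_\ell$ appearing across this finite collection.

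For any $\mu = \mu_0 + k$ with $k \in \BN$, an arbitrary element of $\CB^+_{\mu|\bn'}$ equals $\sigma^k$ applied to an element of $\CB^+_{\mu_0|\bn'}$; applying $\sigma^k$ to the chosen shuffle-product expressions produces new ones whose exponents are shifted up by $k r_{i_\ell} \geq 0$, so the lower bound $-N_0(\mu_0, \bn')$ is preserved. Setting $N_{\bn'} = \max_{\mu_0} N_0(\mu_0, \bn')$ and then $N = \max_{\bn' \leq \bn} N_{\bn'}$, both maxima being over finite sets, completes the argument. The key subtlety is obtaining uniformity in the slope $\mu$: without the $\sigma$-shift, the slopes appearing in \eqref{eqn:factorization 3} could range over all of $\BQ_{\geq 0}$ with no a priori exponent bound, and it is precisely the fact that $\sigma$ only increases exponents that converts a finite-dimensional base case into a uniform global bound.
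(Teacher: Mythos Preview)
Your proof is correct and follows essentially the same approach as the paper's: both invoke the slope factorization \eqref{eqn:factorization 3}, use the finite-dimensionality of the $\CB^+_{\mu|\bn'}$ for $\mu \in [0,1)$ and $\bn' \leq \bn$ (Proposition \ref{prop:easy}), and then apply the slope-shifting automorphism $\sigma$ to extend the exponent bound to all $\mu \geq 0$. Your version is a bit more explicit in separating the argument by $\bn' \leq \bn$ and in spelling out that $\sigma$ only increases the exponents $d_\ell$, but the underlying idea is identical.
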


\medskip

\begin{proof}

 Assume $\br \in \BZ_{>0}^I$. By the definition of $\CS^+$ in Theorem \ref{thm:quantum to shuffle}, it is generated as an algebra by $\{z_{i1}^d\}_{i \in I,d \in \BZ}$. In horizontal degrees up to $\bn$, only finitely many of the subalgebras $\{\CB_{\mu}^+\}_{\mu \in [0,1)}$ are non-zero, and the non-zero ones are finite-dimensional by the first statement of Proposition \ref{prop:easy}. Therefore, we can choose $N$ large enough (depending on $\bn$) so that any shuffle element generated by $\{\CB_{\mu}^+\}_{\mu \in [0,1)}$ up to horizontal degree $\bn$ lies in the span in the right-hand side of \eqref{eqn:span of positive}. By the second statement of Proposition \ref{prop:easy}, the same is true for any shuffle element generated by $\{\CB_{\mu}^+\}_{\mu \geq 0}$. As such shuffle elements span the whole of $\CS_{\geq 0}^+$ by \eqref{eqn:factorization 3}, we are done.

\end{proof}

\medskip

\section{Representation theory via shuffle algebras}
\label{sec:representation}

\medskip

\subsection{Highest $\ell$-weight modules}
\label{sub:highest weight general}

In the present Section, we will study representations of $\CA^\geq$ (which generalizes the Borel subalgebra $\UUaffg$ of quantum affine algebras for finite type $\fg$, as we saw in Proposition \ref{prop:slope affine}). Most of the basic notions here are direct adaptations of those of Subsections \ref{sub:representations} and \ref{sub:q-characters affine}.

\medskip

\begin{definition}
\label{def:ell weight general}

An $\ell$-\textbf{weight} is a collection of invertible power series
\begin{equation}
\label{eqn:ell weight general}
\bpsi = \left(\psi_i(z) = \sum_{d=0}^{\infty} \frac {\psi_{i,d}}{z^d} \in \BC[[z^{-1}]]^* \right)_{i \in I}
\end{equation}
Such a $\bpsi$ is called 

\medskip

\begin{itemize}[leftmargin=*]

\item \textbf{rational} if every $\psi_i(z)$ is the expansion of a rational function

\medskip

\item \textbf{regular} if every $\psi_i(z)$ is the expansion of a rational function which is regular at $z = 0$ (this rational function is already regular and non-zero at $z = \infty$ by \eqref{eqn:ell weight general})

\medskip

\item \textbf{polynomial} if every $\psi_i(z)$ is a polynomial in $z^{-1}$

\medskip

\item \textbf{constant} if there exists $\bom \in \cc$ such that $\psi_i(z) = \psi_{i,0} = q^{(\bom,\bs^i)}$ for all $i \in I$.

\end{itemize}

\medskip

\end{definition}

\medskip 

\noindent Consider any $\ell$-weight $\bpsi$ and any $\bom \in \cc$. If we have
\begin{equation}
\label{eqn:psi to gamma}
\psi_{i,0} = q^{(\bom,\bs^i)}, \quad \forall i \in I
\end{equation}
then we will write $\bom = \lead(\bpsi)$.  Above and henceforth, we identify with each other those direct summands $V_{\bom}$ whose subscripts differ by an element in the kernel of the symmetric pairing \eqref{eqn:symmetric pairing}. One defines $\ell$-weight decompositions of representations of $\CA^\geq$ by direct analogy with \eqref{eqn:ell weight decomposition}-\eqref{eqn:ell weight}. For $\bom \in \cc$, the symbol $[\bom]$ will denote the corresponding constant $\ell$-weight, as in Definition \ref{def:ell weight general}.

\medskip

\begin{definition}
\label{def:verma module}

For any $\ell$-weight $\bpsi$, consider the so-called Verma-like module
\begin{equation}
\CA^\geq \curvearrowright W(\bpsi)
\end{equation}
generated by a single vector $\vac$ modulo the relations
\begin{equation}
\ph_i^+(z) \cdot \vac = \psi_i(z) \vac
\end{equation}
for all $i \in I$, and $\CS_{\geq 0|\bn} \cdot \vac = 0$ for any $\bn > \b0$.

\end{definition}

\medskip

\noindent Because of the triangular decomposition \eqref{eqn:slope zero plus}, we have a vector space isomorphism
\begin{equation}
\label{eqn:iso verma}
W({\bpsi}) \cong \CS^-_{<0}
\end{equation}
Thus, $W(\bpsi)$ inherits a horizontal grading from $\CS^-_{<0}$, which we will shift as
$$
\text{hdeg} \left( F\vac \right) = \bom - \bn
$$
for any $F \in \CS_{<0|-\bn}$, where $\bom = \lead(\bpsi)$. We make this choice so that the weight of $F\vac$ matches the horizontal degree defined above, i.e. $\kappa_iF\vac = q^{(\bom - \bn,\bs^i)} F\vac$.

\medskip

\subsection{Simple modules}
\label{sub:simple}

As is often the case in representation theory, simple modules arise as quotients of Verma-like modules such as the ones of Definition \ref{def:verma module}.

\medskip

\begin{proposition}
\label{prop:ideal} 

For any $\ell$-weight $\bpsi$, consider the linear subspace
\begin{equation}
	\label{eqn:graded j}
J(\bpsi) = \bigoplus_{\bn \in \nn}	J(\bpsi)_{\bn} \subseteq  \bigoplus_{\bn \in \nn} \CS_{<0|-\bn} = \CS^-_{<0} 
\end{equation}
consisting of those shuffle elements $F(z_{i1},\dots,z_{in_i})_{i \in I} \in \CS_{<0|-\bn}$ such that \footnote{Note that the left-hand side of \eqref{eqn:psi pairing} is well-defined for all $E$ and $F$, because the pairing is trivial unless the total vertical degree of its arguments is 0. Thus, only finitely many terms of the power series $\psi_i(z)$ actually contribute to the left-hand side of \eqref{eqn:psi pairing} for any given $E$ and $F$.}
\begin{equation}
\label{eqn:psi pairing}
\left \langle E(z_{i1},\dots,z_{in_i}) \prod_{i \in I} \prod_{a=1}^{n_i} \psi_i(z_{ia}) , S \left( F(z_{i1},\dots,z_{in_i}) \right) \right \rangle = 0
\end{equation}
$\forall E \in \CS_{\geq 0|\bn}$. Then $J(\bpsi) \vac$ is the unique maximal graded $\CA^\geq$ submodule of $W(\bpsi)$. 

\end{proposition}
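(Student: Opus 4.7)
The plan is to identify the maximal proper graded $\CA^\geq$-submodule of $W(\bpsi)$ as the left kernel of a Shapovalov-type bilinear pairing $\CS^+_{\geq 0} \times W(\bpsi) \to \BC\vac$, and then to recognise this kernel as $J(\bpsi)\vac$ via formula \eqref{eqn:psi pairing}.

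First, I would observe that the unique maximal proper graded $\CA^\geq$-submodule of $W(\bpsi)$ admits the intrinsic description $M = \{v \in W(\bpsi) : \pi_\vac(X \cdot v) = 0 \text{ for all } X \in \CA^\geq\}$, where $\pi_\vac : W(\bpsi) \twoheadrightarrow \BC\vac$ denotes the top-weight projection. Indeed the right-hand side is a submodule (the defining condition is invariant under left multiplication by $\CA^\geq$), it is proper (since $\pi_\vac(1 \cdot \vac) = \vac \neq 0$), and it contains every proper graded submodule $N$ (which must satisfy $N \cap \BC\vac = 0$, so that $\pi_\vac(Xv) = \pi_\vac(N) = 0$ for every $v \in N$). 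Decomposing $X = F' \varphi E'$ in the triangular decomposition $\CA^\geq = \CS^-_{<0} \otimes \CB^+_\infty \otimes \CS^+_{\geq 0}$ of \eqref{eqn:slope zero plus} and using that $E' \cdot v$ cannot exceed weight $\bom$ in $W(\bpsi)$, the only summands contributing to $\pi_\vac(Xv)$ are those with $F' = 1$ and $\hdeg E' = \bn$ whenever $v \in W(\bpsi)_{\bom - \bn}$; hence $v = F\vac \in M$ if and only if $\pi_\vac(E \cdot F\vac) = 0$ for every $E \in \CS_{\geq 0|\bn}$.

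The main technical step is then the identity
\begin{equation}
\pi_\vac\big(E \cdot F\vac\big) = \Big\langle E(z_{i1},\dots,z_{in_i}) \prod_{i \in I}\prod_{a=1}^{n_i} \psi_i(z_{ia}),\ S\big(F(z_{i1},\dots,z_{in_i})\big) \Big\rangle \vac \tag{$\star$}
\end{equation}
for every $E \in \CS_{\geq 0|\bn}$ and $F \in \CS_{<0|-\bn}$. I would derive $(\star)$ by applying the Drinfeld double identity \eqref{eqn:drinfeld double shuffle reversed} to $EF$, expanding $\Delta^{(2)}(E)$ and $\Delta^{(2)}(F)$ via the coproduct formulas \eqref{eqn:coproduct shuffle plus}--\eqref{eqn:coproduct shuffle minus}, and isolating the summands with $E_2 \in \BC$ and $F_2 \in \CB^-_\infty$ -- the only ones whose action on $\vac$ lands in $\BC\vac$. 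The surviving $\ph^-$-content of $F_2$ pairs against the Cartan content of $E_1$ through \eqref{eqn:pairing ph}, while the eigenvalue condition $\ph_i^+(z)\vac = \psi_i(z)\vac$ converts each $\ph_i^+$ insertion arising from $E_1$ into the factor $\psi_i(z_{ia})$; organising the resulting contour integrals via the antipode formula \eqref{eqn:antipode pairing shuffle} then produces precisely the right-hand side of $(\star)$. Once $(\star)$ is in hand, the above description of $M$ yields $M = J(\bpsi)\vac$, which is the desired conclusion.

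The main obstacle will be the verification of $(\star)$: the iterated coproducts $\Delta^{(2)}$ produce many summands, and one must track carefully which ones contribute to the top weight and check that the $\ph^-$-content of $F_2$ recombines with the $\ph^+$-content of $E_1$ to produce exactly the insertion $\prod_{i,a} \psi_i(z_{ia})$ on the right of $(\star)$. A cleaner alternative would be to prove $(\star)$ by induction on $|\bn|$: the base case $F = z_{i1}^d$ with $\bn = \bs^i$ reduces to \eqref{eqn:rel 3 loop}, \eqref{eqn:pairing ef} and the eigenvalue $\ph_{i,d}^+\vac = \psi_{i,d}\vac$, while the inductive step uses coassociativity of the shuffle coproducts \eqref{eqn:coproduct shuffle plus}--\eqref{eqn:coproduct shuffle minus} together with the naturality of the Hopf pairing in each argument.
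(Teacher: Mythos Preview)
Your approach is correct and closely parallels the paper's, repackaged around the Shapovalov-radical characterization of the maximal submodule. The paper instead verifies directly that $J(\bpsi)\vac$ is stable under each of the three pieces $\CS^-_{<0}$, $\CB^+_\infty$, $\CS^+_{\geq 0}$ of the triangular decomposition \eqref{eqn:slope zero plus}, and then deduces maximality from exactly your identity $(\star)$ (this is the argument surrounding \eqref{eqn:r on vac}). Your organization is slightly more economical, since $M$ is manifestly a submodule and you avoid the three separate stability checks; the technical core is the same.

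One correction to your sketch of $(\star)$: the clean route (which is what the paper does) is to first invoke the slope orthogonality \eqref{eqn:orthogonal} between $\CS^{\geq}_{\geq 0}$ and $\CS^-_{<0}$ to kill the factor $\langle E_1,F_1\rangle$ in \eqref{eqn:drinfeld double shuffle reversed}, collapsing it to $EF = F_1E_1\langle E_2,S(F_2)\rangle$ with $F_1 \in \CS^-_{<0}$ and $E_1 \in \CS^{\geq}_{\geq 0}$ (both now in $\CA^\geq$, so the term-by-term action on $\vac$ makes sense). Then the only surviving summand of $\Delta(E)$ is the leading term $\prod_{i,a}\ph_i^+(z_{ia}) \otimes E$ from \eqref{eqn:leading delta plus}, whose first factor acts on $\vac$ by $\prod_{i,a}\psi_i(z_{ia})$; combined with $\Delta(F) = 1 \otimes F + (\text{higher hdeg in the second slot})$ this gives $(\star)$ on the nose. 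Your description ``$E_2 \in \BC$, $F_2 \in \CB^-_\infty$, and the $\ph^-$-content of $F_2$ pairs against the Cartan content of $E_1$'' is off: no $\ph^-$ ever acts on $W(\bpsi)$ (it is not in $\CA^\geq$), and the $\psi$ factors come from $\ph^+$ acting on $\vac$, not from a $\langle\ph^+,\ph^-\rangle$ pairing. The orthogonality step is what keeps all intermediate terms inside $\CA^\geq$ and should replace your $\Delta^{(2)}$ bookkeeping.
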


\medskip

\begin{example} 
\label{ex:2}

For $\fg = \fsl_2$, $J(\psi)$ is the set of $F \in \CS_{<0|-n}$ such that
\begin{equation}
\label{eqn:formula ex 2}
\int_{|z_1| \ll \dots \ll |z_n|} \frac {z_1^{d_1} \dots z_n^{d_n} F(z_1,\dots,z_n)}{\prod_{1\leq a < b \leq n} \frac {z_b - z_a q^{-2}}{z_b-z_a}} \psi(z_1) \dots \psi (z_n) = 0
\end{equation}
$\forall d_1,\dots,d_n \geq 0$. This is a consequence of \eqref{eqn:antipode pairing shuffle} and Proposition \ref{prop:non-negative}.

\end{example}

\medskip 

\begin{proof} We need to show that the subspace $J(\bpsi) \vac \subseteq W(\bpsi)$ is preserved by 

\medskip

\begin{enumerate}[leftmargin=*]

\item left multiplication with $\CS^-_{<0}$

\medskip

\item left multiplication with the $\ph_{i,d}^+$'s, or equivalently, with $\{\kappa_j,p_{j,u}\}_{j \in I, u > 0}$

\medskip

\item left multiplication with $\CS^+_{\geq 0}$

\end{enumerate}

\medskip

\noindent To prove (1), let us consider any $F' \in \CS^-_{<0}$, $F'' \in  J(\bpsi)$ and $E \in \CS^+_{\geq 0}$. The fact that the antipode map is an antiautomorphism and \eqref{eqn:bialgebra 1} imply that
\begin{equation}
\label{eqn:*}
\left \langle E \prod \psi , S(F' * F'') \right \rangle = \left \langle \Delta(E \prod \psi) , S(F'') \otimes S(F') \right \rangle
\end{equation}
where $E\prod \psi$ is shorthand for $E(z_{i1},\dots,z_{in_i})\prod_{i \in I} \prod_{a=1}^{n_i} \psi_i(z_{ia})$. The following Claim is an easy consequence of \eqref{eqn:bialgebra 2} and \eqref{eqn:antipode pairing}, which we leave as an exercise to the reader. It is the reason we used $S(F)$ instead of $F$ in \eqref{eqn:psi pairing}.

\medskip

\begin{claim}
\label{claim:pairing}

If \eqref{eqn:psi pairing} holds for all $E \in \CS_{\geq 0}^+$, then it also holds for all $E \in \CS_{\geq 0}^{\geq}$.

\end{claim}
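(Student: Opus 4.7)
The plan is to use the triangular decomposition $\CS^\geq_{\geq 0} = \CS^+_{\geq 0} \otimes \BC[\ph^+_{i,d}]_{i\in I, d\geq 0}$ to reduce the claim, by linearity, to showing that $\langle (E\prod\psi)\cdot \Phi, S(F)\rangle = 0$ whenever $E \in \CS^+_{\geq 0}$ and $\Phi$ is a monomial in the $\ph^+_{i,d}$'s, under the natural convention that the insertion $\prod \psi$ twists the variables of the shuffle part $E$ only, leaving $\Phi$ unaltered. First I would apply the Hopf pairing axiom \eqref{eqn:bialgebra 2} to rewrite the pairing as
$$\Big\langle (E\prod\psi)\otimes \Phi, \Delta^{\text{op}}(S(F))\Big\rangle,$$
and then invoke the identity $\Delta^{\text{op}}(S(F)) = (S\otimes S)(\Delta(F))$, which follows from $\Delta\circ S = (S\otimes S)\circ \Delta^{\text{op}}$ together with $\tau\circ(S\otimes S) = (S\otimes S)\circ \tau$, to reduce everything to the coproduct $\Delta(F)$.

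By the leading-term formula \eqref{eqn:leading delta minus}, the only term of $\Delta(F)$ whose second tensor factor has horizontal degree $0$ is $F \otimes \prod_{i,a}\ph^-_i(z_{ia})$; applying $(S\otimes S)$ yields the corresponding term $S(F) \otimes \prod_{i,a}(\ph^-_i(z_{ia}))^{-1}$ in $\Delta^{\text{op}}(S(F))$. Since $\Phi$ has horizontal degree $0$, the bigraded structure of the pairing forces only this leading term to contribute. Next I would evaluate the auxiliary pairing $\langle \Phi, \prod_{i,a}(\ph^-_i(z_{ia}))^{-1}\rangle$ by using the multiplicativity of the Hopf pairing on products of group-like series together with \eqref{eqn:pairing ph}: the result is a color-symmetric Laurent polynomial $P_\Phi(z_{ia})$, obtained via coefficient extraction from the generating series $\prod_{i,a}\zeta_{ij}(z_{ia}/y)/\zeta_{ji}(y/z_{ia})$ expanded in $y^{-1}$. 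Substituting back, the full pairing collapses to $\langle (E\cdot P_\Phi)\prod\psi, S(F)\rangle$, and Claim \ref{claim:preserved} ensures that $E\cdot P_\Phi$ still lies in $\CS^+_{\geq 0|\bn}$; the right-hand side therefore vanishes by the hypothesis \eqref{eqn:psi pairing}.

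The main obstacle in executing this plan lies in the bookkeeping required for two checks: first, that the horizontal-degree analysis really does isolate a unique term of $\Delta^{\text{op}}(S(F))$ pairing non-trivially with $\Phi$, which requires a careful reading of the full sum \eqref{eqn:coproduct shuffle minus} and not merely its leading term; and second, that the coefficient extraction producing $P_\Phi$ yields a genuine color-symmetric Laurent polynomial rather than a rational function with uncancellable poles. Both items are combinatorial once the expansion convention $|y|\gg|z_{ia}|$ of \eqref{eqn:pairing ph} is fixed, and a direct computation in the $\fsl_2$ case of Example \ref{ex:1} shows that no denominators survive; however, the general Kac-Moody case deserves explicit verification.
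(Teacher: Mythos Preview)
Your approach is correct and matches the paper's hint (which leaves the claim as an exercise citing only \eqref{eqn:bialgebra 2} and \eqref{eqn:antipode pairing}). Both of the concerns you raise at the end are easily settled: for the first, the full formula \eqref{eqn:coproduct shuffle minus} shows that every $\bm < \bn$ summand of $\Delta(F)$ has second tensor factor of horizontal degree $-(\bn - \bm) < \b0$, so only $\bm = \bn$ survives the pairing with $\Phi$; for the second, since $(\ph^-_i(z))^{-1} = \kappa_i\exp\bigl(-\sum_{u\geq 1}p_{i,-u}z^u/u\bigr)$ is a series in nonnegative powers of $z$ with coefficients in the Cartan, pairing with any monomial $\Phi$ in the $\ph^+_{j,d}$'s extracts a genuine color-symmetric polynomial in the $z_{ia}$.

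There is one step that deserves more care than you give it. When you write ``applying $(S\otimes S)$ yields $S(F)\otimes\prod_{i,a}(\ph^-_i(z_{ia}))^{-1}$'', you are treating the $z_{ia}$ as formal parameters decoupled from the algebra; but the honest expansion of the $\bm=\bn$ term of $\Delta(F)$ is $\sum_\beta(F\cdot P_\beta)\otimes C_\beta$ with $P_\beta$ products of color power sums and $C_\beta\in\CB^-_\infty$, and applying $S\otimes S$ gives $\sum_\beta S(F\cdot P_\beta)\otimes S(C_\beta)$. Collapsing this to $\langle(E\cdot P_\Phi)\prod\psi, S(F)\rangle$ requires the identity $\langle A, S(F\cdot P)\rangle = \langle A\cdot P, S(F)\rangle$ for $A\in\CS^+$ and color-symmetric $P$. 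This does hold, and follows from Lemma \ref{lem:antipode pairing shuffle}: for $A = e_{i_1,d_1}*\dots*e_{i_n,d_n}$ and $P$ a single-color power sum both sides equal the same integral, and one extends by linearity and iteration. Alternatively, you can bypass the collapse entirely by observing that each $F\cdot P_\beta$ lies in $J(\bpsi)$ (this is exactly the content of part (2) of the proof of Proposition \ref{prop:ideal}, which is established independently of the present claim), whence each summand $\langle E\prod\psi, S(F\cdot P_\beta)\rangle$ already vanishes by hypothesis.
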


\medskip

\noindent By Claim \ref{claim:preserved} and \eqref{eqn:slope 1}, we have $\Delta(E \prod \psi) \in \CS^\geq_{\geq 0} \otimes \CS^+$. Then Claim \ref{claim:pairing} and the fact that $F'' \in J(\bpsi)$ imply that the first tensor factors in the RHS of \eqref{eqn:*} have pairing 0. Thus, the whole pairing in \eqref{eqn:*} is 0, hence $F' * F'' \in J(\bpsi)$, as required.

\medskip

\noindent To prove (2), recall from \eqref{eqn:k shuffle}-\eqref{eqn:p shuffle} that commuting $F$ with $\kappa_j$ and $p_{j,u}$ amounts to multiplying $F$ by either a scalar or a color-symmetric polynomial $\rho = \rho(z_{ia})_{i \in I, 1 \leq a \leq n_i}$. As can be readily seen from formulas \eqref{eqn:pairing shuffle plus}-\eqref{eqn:pairing shuffle minus}, 
$$
\Big \langle E \rho, F \Big \rangle = \Big \langle E, F \rho \Big \rangle, \quad \forall  E \in \CS^+, \ F \in \CS^-, \ \rho \text{ Laurent polynomial}
$$
Therefore, replacing $F \leadsto F \rho$ in \eqref{eqn:psi pairing} has the same effect as replacing $E \leadsto E \rho$. By Claim \ref{claim:preserved}, we have $E \in \CS_{\geq 0}^+ \Rightarrow E\rho \in \CS_{\geq 0}^+$ for any color-symmetric polynomial $\rho$. Thus, the fact that relation \eqref{eqn:psi pairing} holds $\forall E \in \CS_{\geq 0}^+$ and given $F$ implies that \eqref{eqn:psi pairing} holds $\forall E \in \CS_{\geq 0}^+$ and given $F\rho$; this establishes (2).

\medskip

\noindent For statement (3), we invoke \eqref{eqn:drinfeld double shuffle reversed} for any $E \in \CS_{\geq 0|\bn}$ and $F \in \CS^-_{<0}$:
\begin{equation}
\label{eqn:base}
EF = \Big \langle E_1, F_1 \Big \rangle F_2E_2 \Big \langle E_3, S(F_3) \Big \rangle = F_1E_1 \Big \langle E_2, S(F_2) \Big \rangle
\end{equation}
The second equality is due to the fact that $E_1$ and $F_1$ have slopes $\geq 0$ and $<0$ (respectively, see \eqref{eqn:delta 2 e}-\eqref{eqn:delta 2 f}), and formula \eqref{eqn:orthogonal}. Applying \eqref{eqn:base} to $\vac$ gives us
\begin{equation}
\label{eqn:r on vac}
EF \vac = F_1 \vac \cdot \left \langle E(z_{i1},\dots,z_{in_i}) \prod_{i \in I} \prod_{a=1}^{n_i} \psi_i(z_{ia}) , S ( F_2 ) \right \rangle 
\end{equation}
because $\Delta(E) = \prod_{i\in I} \prod_{a=1}^{n_i} \ph_i^+(z_{ia}) \otimes E$ plus terms whose first tensor factor has $\text{hdeg} > \b0$ (see \eqref{eqn:leading delta plus}), and thus annihilates $\vac$. Recall from \eqref{eqn:slope 2} that all $F_1$ which appear in the formula above lie in $\CS^-_{<0}$. To show that the right-hand side of \eqref{eqn:r on vac} lies in $J(\bpsi) \vac$, it suffices to show that for any $E' \in \CS_{\geq 0|\bm}$ we have 
$$
0 = \left \langle E'(z_{i1},\dots,z_{im_i}) \prod_{i \in I} \prod_{a=1}^{m_i} \psi_i(z_{ia}) , S ( F_1 ) \right \rangle   \left \langle E(z_{i1},\dots,z_{in_i}) \prod_{i \in I} \prod_{a=1}^{n_i} \psi_i(z_{ia}) , S ( F_2 ) \right \rangle 
$$
$$
\stackrel{\eqref{eqn:bialgebra 2}}= \left \langle (E' * E)(z_{i1},\dots,z_{i,m_i+n_i}) \prod_{i \in I} \prod_{a=1}^{m_i+n_i} \psi_i(z_{ia}) , S ( F ) \right \rangle
$$
However, the pairing above is 0 because $\CS^+_{\geq 0}$ is closed under $*$, and $F \in J(\bpsi)$.

\medskip

\noindent Having showed that $J(\bpsi) \vac$ is a graded $\CA^\geq$ submodule of $W(\bpsi)$, it remains to show that it is the unique such maximal graded submodule. To this end, choose any $F \in \CS_{<0|-\bn} \backslash J(\bpsi)_{\bn}$, which means that there exists $E \in \CS_{\geq 0|\bn}$ such that
$$
\left \langle E(z_{i1},\dots,z_{in_i}) \prod_{i \in I} \prod_{a=1}^{n_i} \psi_i(z_{ia}) , S ( F(z_{i1},\dots,z_{in_i}) ) \right \rangle =: \alpha \neq 0
$$
Because $\Delta(F) = 1 \otimes F$ plus tensors whose second factor has hdeg $> \hdeg F$, formula \eqref{eqn:r on vac} reads precisely $EF \vac = \alpha \vac$. This implies that any graded submodule of $W(\bpsi)$ which strictly contains $J(\bpsi) \vac$ must contain the highest weight vector $\vac$, and thus must be the whole of $W(\bpsi)$. 

\medskip

\end{proof}

\begin{corollary}
\label{cor:simple module}

For any $\ell$-weight $\bpsi = (\psi_i(z))_{i \in I}$, the quotient
\begin{equation}
\label{eqn:simple module}
L(\bpsi) = W(\bpsi) \Big/ J(\bpsi) \vac 
\end{equation}
is the unique (up to isomorphism) simple graded $\CA^\geq$ module generated by a single vector $v$ subject to
$$
\ph_i^+(z) \cdot v  = \psi_i(z) v
$$
for all $i \in I$, and $\CS_{\geq 0|\bn} \cdot v = 0$ for all $\bn > \b0$.  

\end{corollary}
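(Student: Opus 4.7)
The plan is to deduce the corollary essentially immediately from Proposition \ref{prop:ideal}. Since that proposition established $J(\bpsi)\vac$ to be a graded $\CA^\geq$-submodule of $W(\bpsi)$, the quotient $L(\bpsi) := W(\bpsi)/J(\bpsi)\vac$ inherits a graded $\CA^\geq$-module structure. Let $v$ denote the image of $\vac$. The defining relations of $W(\bpsi)$ (Definition \ref{def:verma module}) descend to yield $\ph_i^+(z) \cdot v = \psi_i(z) v$ and $\CS_{\geq 0|\bn} \cdot v = 0$ for all $\bn > \b0$. Moreover $v \neq 0$: indeed, the direct sum in \eqref{eqn:graded j} runs over $\bn \in \nn$ for which $\CS_{<0|-\bn}$ is nontrivial, forcing $\bn > \b0$, so $\vac$, which lives in horizontal degree $\bom = \lead(\bpsi)$, does not lie in $J(\bpsi)\vac$.

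\medskip

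For simplicity, let $N \subseteq L(\bpsi)$ be a nonzero graded $\CA^\geq$-submodule, and let $\widetilde{N} \subseteq W(\bpsi)$ be its preimage under the quotient map. Then $\widetilde{N}$ is a graded $\CA^\geq$-submodule which strictly contains $J(\bpsi)\vac$. By the maximality assertion in Proposition \ref{prop:ideal}, $\widetilde{N} = W(\bpsi)$, and hence $N = L(\bpsi)$.

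\medskip

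For uniqueness, suppose $M$ is a simple graded $\CA^\geq$-module generated by some $v'$ satisfying the same two relations. Since $W(\bpsi)$ is defined by exactly this presentation, there is a unique graded $\CA^\geq$-module map $\Phi : W(\bpsi) \to M$ sending $\vac \mapsto v'$; it is surjective because $M$ is generated by $v'$. Its kernel $K$ does not contain $\vac$ (as $v' \neq 0$) and is therefore a proper graded submodule; by the uniqueness clause of Proposition \ref{prop:ideal} it must be contained in $J(\bpsi)\vac$. Simplicity of $M$ forces $K$ to be a maximal graded submodule, so $K = J(\bpsi)\vac$, yielding $M \cong L(\bpsi)$. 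The only mild subtlety is invoking the \emph{uniqueness} clause of Proposition \ref{prop:ideal}---namely that every proper graded submodule is contained in $J(\bpsi)\vac$, rather than merely that $J(\bpsi)\vac$ is some maximal submodule---but this is exactly what the end of the cited proof establishes by exhibiting, for any $F \notin J(\bpsi)$, an element $E \in \CS_{\geq 0}^+$ with $EF\vac = \alpha\vac$ for some $\alpha \neq 0$. There is no substantive obstacle beyond what Proposition \ref{prop:ideal} already provides.
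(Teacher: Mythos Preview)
Your proof is correct and follows the standard deduction that the paper leaves implicit (the corollary is stated without proof immediately after Proposition \ref{prop:ideal}). One minor quibble: your justification that $v \neq 0$ is slightly off, since $\CS_{<0|-\b0} = \BC$ is \emph{not} trivial (the unit lies in every slope subalgebra); the correct reason is that $J(\bpsi)_{\b0} = 0$, as the pairing $\langle 1,S(1)\rangle = 1$ forces any scalar in $J(\bpsi)_{\b0}$ to vanish---equivalently, Proposition \ref{prop:ideal} already asserts $J(\bpsi)\vac$ is \emph{maximal}, hence proper. This does not affect the rest of the argument.
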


\medskip

\noindent For finite type $\fg$, the (type 1) simple finite-dimensional $\UUaff$ modules were shown in \cite{CP} to be isomorphic to $L(\bpsi)$ for 
\begin{equation}
\label{eqn:finite dimensional psi}
\bpsi = \left( \psi_i(z) =  \prod_{\alpha \in \BC^*} \frac {z q_i - \alpha q^{-1}_i }{z - \alpha}\right)_{i \in I}
\end{equation}
(with finitely many $\alpha$'s in the product above, not necessarily distinct). In particular, such $\bpsi$ are regular $\ell$-weights, in the sense of Definition \ref{def:ell weight general}.

\medskip

\subsection{Category $\CO$}
\label{sub:category O}

The following is the natural generalization of Definition \ref{def:category o affine}.

\medskip

\begin{definition}
\label{def:category o general}

A complex representation $\CA^\geq \curvearrowright V$ is said to be in \textbf{category $\CO$} if
\begin{equation}
\label{eqn:direct sum category O}
V = \bigoplus_{\bom \in \cup_{s=1}^t (\bom^s - \nn) } V_{\bom}
\end{equation}
for finitely many $\bom^1,\dots,\bom^t \in \cc$, such that every
\begin{equation}
\label{eqn:weight general}
V_{\bom} = \Big\{v \in V \text{ s.t. } \kappa_i \cdot v = q^{(\bom, \bs^i)}v, \ \forall i\in I \Big\}
\end{equation}
is finite-dimensional.

\end{definition}

\medskip

\noindent The following is the natural generalization of the last sentence in Theorem \ref{thm:simple}.

\medskip

\begin{theorem}
\label{thm:is rational} 

The simple $\CA^\geq$-modules in category $\CO$ are precisely $L(\bpsi)$ for rational $\ell$-weights $\bpsi$ (in the sense of Definition \ref{def:ell weight general}).

\end{theorem}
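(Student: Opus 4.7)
The plan is to prove the two implications separately, in each case working with the graded-vector-space identification $L(\bpsi)_{\bom-\bn} \cong \CS_{<0|-\bn}/J(\bpsi)_\bn$ provided by Corollary \ref{cor:simple module} together with \eqref{eqn:iso verma}, where $\bom = \lead(\bpsi)$. The weight decomposition of $L(\bpsi)$ already has the shape required in Definition \ref{def:category o general}, with $t=1$ and $\bom^1 = \bom$, so the entire content of category $\CO$-membership reduces to finite-dimensionality of each individual weight space $L(\bpsi)_{\bom-\bn}$.

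For the implication $L(\bpsi) \in \CO \Longrightarrow \bpsi$ rational, fix $i \in I$ and examine the finite-dimensional weight space $L(\bpsi)_{\bom-\bs^i}$. In horizontal degree $-\bs^i$ only one variable $z_1$ of color $i$ appears, and the unique admissible $\bm$ in Definition \ref{def:slope} shows that $z_1^d$ has slope exactly $-d/r_i$; consequently $\CS_{<0|-\bs^i}$ is spanned by $\{f_{i,d}\}_{d\geq 1}$ while $\CS_{\geq 0|\bs^i}$ is spanned by $\{e_{i,d'}\}_{d'\geq 0}$. Finite-dimensionality of the quotient forces a non-trivial relation $\sum_{d=1}^N c_d f_{i,d} \in J(\bpsi)_{\bs^i}$ with $c_N \neq 0$; testing the defining property \eqref{eqn:psi pairing} against $E = e_{i,d'}$ and evaluating by Lemma \ref{lem:antipode pairing shuffle} (whose one-variable version just reads off the coefficient of $z^{-(d'+d)}$ in the expansion of $\psi_i(z)$) yields
\begin{equation*}
\sum_{d=1}^N c_d\, \psi_{i,d'+d} = 0, \qquad \forall\, d' \geq 0.
\end{equation*}
This linear recurrence of order $N$ on the Taylor coefficients of $\psi_i(z)$ is the classical criterion for $\psi_i(z)$ to be the expansion of a rational function.

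For the converse, the goal is to bound the codimension of $J(\bpsi)_\bn$ in $\CS_{<0|-\bn}$, which equals the rank of the bilinear pairing $(E,F) \mapsto \langle E \prod\psi, S(F)\rangle$ on $\CS_{\geq 0|\bn} \otimes \CS_{<0|-\bn}$. By Proposition \ref{prop:weaker}, $\CS_{\geq 0|\bn}$ is spanned by shuffle products $E_{\mathbf{d}} = e_{i_1,d_1} * \cdots * e_{i_n,d_n}$ with $d_a \geq -N(\bn)$ for a constant $N$ depending only on $\bn$; for such $E$, Lemma \ref{lem:antipode pairing shuffle} (after incorporating the factor $\prod_a \psi_{i_a}(z_a)$ into the integrand) gives
\begin{equation*}
\left\langle E_{\mathbf{d}}\textstyle\prod\psi,\, S(F) \right\rangle = (-1)^n \int_{|z_1|\ll\cdots\ll|z_n|} \frac{\prod_a z_a^{d_a}\psi_{i_a}(z_a)\, F(z_1,\dots,z_n)}{\prod_{a<b}\zeta_{i_bi_a}(z_b/z_a)}.
\end{equation*}
When $\bpsi$ is rational, each $\psi_{i_a}$ has only finitely many poles, so the contour integral expands as a finite sum of residues taken at $z_a=0$, at the poles of the $\psi_{i_a}$, and at the cross-diagonals $z_b = z_a q^{-d_{i_ai_b}}$ produced by $\zeta$. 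Varying the exponents $d_a$ only multiplies each residue at a fixed pole $p$ by $p^{d_a}$, so the family of linear functionals on $F$ produced by varying $E_{\mathbf{d}}$ spans a finite-dimensional subspace of $(\CS_{<0|-\bn})^*$, bounding $\dim L(\bpsi)_{\bom-\bn}$ from above.

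The main obstacle is in this converse direction: one must carefully track the cross-residues where a specialization $z_b = z_a q^{-d_{i_ai_b}}$ drops the integral to $n-1$ variables and propagates the poles of $\bpsi$ through $q^{-d_{ij}}$-shifts. An induction on $|\bn|$ organizes this bookkeeping, in effect producing the combinatorial structure of admissible evaluation points $\bx$ underlying the multiplicity formulas \eqref{eqn:intro q-character simple}; for the present theorem only the qualitative finite upper bound on the rank is needed.
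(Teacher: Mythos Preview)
The proposal is correct and follows essentially the same approach as the paper's proof: both reduce the ``only if'' direction to the weight space at $\bom-\bs^i$ and extract rationality of $\psi_i$ from a nonzero element of $J(\bpsi)_{\bs^i}$, and both establish the ``if'' direction by invoking Proposition \ref{prop:weaker} together with Lemma \ref{lem:antipode pairing shuffle} and then contracting the contour integral to a finite collection of iterated residues at $0$, at the poles of the $\psi_{i_a}$, and at the $\zeta$-diagonals. Your phrasing ``varying the exponents $d_a$ only multiplies each residue at a fixed pole $p$ by $p^{d_a}$'' is an oversimplification (higher-order poles contribute polynomial-in-$d_a$ combinations of finitely many fixed functionals, and the pole at $0$ requires the bound $d_a\geq -N$ to cap the order), but you correctly flag that this bookkeeping is the remaining obstacle, and the finiteness conclusion is exactly what the paper obtains.
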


\medskip

\begin{proof} As a vector space, the simple module \eqref{eqn:simple module} is isomorphic to the quotient
\begin{equation}
\label{eqn:simple}
L(\bpsi) \cong \CS^-_{<0} \Big / J(\bpsi)
\end{equation}
It is easy to see that its weight spaces are given by (let $\bom = \lead(\bpsi)$)
\begin{equation}
\label{eqn:weight simple}
L(\bpsi)_{\bom - \bn} \cong \CS_{<0|-\bn} \Big / J(\bpsi)_{\bn}
\end{equation}
as $\bn$ varies over $\nn$. Thus, $L(\bpsi)$ does indeed decompose as a direct sum \eqref{eqn:direct sum category O} with $t = 1$ and $\bom^1 = \bom$. Therefore, it remains to show that the weight spaces \eqref{eqn:weight simple} are finite-dimensional if and only if $\bpsi$ is rational. We start with the ``only if" statement: for any $i \in I$, the vector space $\CS_{<0|-\bs^i} = z_{i1}\BC[z_{i1}]$ is infinite dimensional. If $L(\bpsi)_{\bom - \bs^i}$ is to be finite dimensional, then there must exist a polynomial
$$
0 \neq Q(z_{i1}) \in J(\bpsi)_{\bs^i}
$$
The defining property \eqref{eqn:psi pairing} for $\bn = \bs^i$ implies that for all $d \geq 0$, we have
$$
0 = \left \langle z_{i1}^{d} \psi_i(z_{i1}) , S(Q(z_{i1})) \right \rangle = \text{constant term of } \Big[ - z_{i1}^{d} \psi_i(z_{i1}) Q(z_{i1}) \Big]
$$
The equality above can hold for all $d \geq 0$ only if $\psi_i(z_{i1}) Q(z_{i1})$ equals a polynomial $P(z_{i1})$, which would imply that $\psi_i(z)$ is the expansion of a rational function.

\medskip

\noindent For the ``if" statement, we must prove that for any $\bn \in \nn$, finitely many linear conditions on $F \in \CS_{<0|-\bn}$ will ensure that \eqref{eqn:psi pairing} holds for all $E \in \CS_{\geq 0|\bn}$. By \eqref{eqn:span of positive}, it suffices to show that finitely many linear conditions on $F \in \CS_{<0|-\bn}$ ensure that
\begin{multline}
\label{eqn:the computation}
0 = \left\langle e_{i_1,d_1} * \dots * e_{i_n,d_n}  \prod \psi, S(F) \right \rangle \stackrel{\eqref{eqn:antipode pairing shuffle}}= \\ (-1)^n \int_{1 \ll |z_1| \ll \dots \ll |z_n|} \frac {z_1^{d_1} \dots z_n^{d_n} F (z_1,\dots,z_n)}{\prod_{1\leq a < b \leq n} \zeta_{i_bi_a} \left(\frac {z_b}{z_a} \right)} \prod_{a=1}^n \psi_{i_a}(z_a)
\end{multline}
for all $d_1,\dots,d_n \geq -N$ (we may henceforth fix any one of the finitely many orderings $i_1,\dots,i_n$ of $\bn$, see Definition \ref{def:i ordering}). The symbol $1 \ll$ in the subscript of $\int$ means that the contours of integration of the variables $z_1,\dots,z_n$ must be very large relative to the finitely many poles of the rational functions $\psi_{i_a}(z_a)$. Therefore, we may compute \eqref{eqn:the computation} by sending the variable $z_1$ to 0, then sending $z_2$ to 0, $\dots$, then sending $z_n$ to $0$. At step $b$ of this process, we must account for the residues when

\medskip

\begin{itemize}[leftmargin=*]

\item $z_b$ is one of the poles of $\psi_{i_b}$ 

\medskip

\item $z_b = z_aq^{-d_{i_ai_b}}$ for some $a<b$

\medskip 

\item $z_b = 0$

\end{itemize}

\medskip 

\noindent In all three cases above, finitely many linear conditions on $F$ ensure the vanishing of the iterated residue in the variables $z_1,\dots,z_n$ (for example, in the third bullet we require the vanishing of $F$ up to and including order $N+r_{i_b}$ at $z_b = 0$, where $r_{i_b}$ is the order of the pole of $\psi_{i_b}(z_b)$ at $z_b = 0$). This concludes our proof.

\end{proof}

\medskip

\subsection{The polynomial case}
\label{sub:integral}

An important instance of simple modules are the ones with polynomial highest $\ell$-weight, namely
$$
\btau = (\tau_i(z))_{i \in I} \in \left( \BC^* + z^{-1} \BC[z^{-1}] \right)^I
$$
The following shows that as a graded vector space, $L(\btau)$ only depends on the $I$-tuple $\br = \ord \btau$ of degrees of the $\tau_i$ in $z^{-1}$ (up to a grading shift by $\bom = \lead(\btau)$).

\medskip

\begin{proposition}
\label{prop:integral}

For any polynomial $\ell$-weight $\btau$ as above, $J(\btau)$ coincides with \begin{equation}
\label{eqn:j integral}
J^{\br} := \left\{F \in \CS_{<0}^- \text{ s.t. } \left \langle E (z_{ia}) \prod_{i,a} z_{ia}^{-r_i}, S(F(z_{ia})) \right \rangle = 0, \forall E \in \CS_{\geq 0}^+ \right\} 
\end{equation}
and so inherits an extra grading by $d = \emph{vdeg }F$ from $J^{\br}$. In other words, if we let
\begin{equation}
	\label{eqn:l integral}
	L^{\br} = \bigoplus_{\bn \in \nn} \bigoplus_{d \in \BN} L^{\br}_{-\bn,d} :=  \bigoplus_{\bn \in \nn} \bigoplus_{d \in \BN} \CS_{<0|-\bn,d} \Big/ J^{\br}_{\bn,d} =  \CS^-_{<0} \Big/ J^{\br} 
\end{equation}
(which is well-defined for any $\br \in \zz$), then we may define
\begin{equation}
\label{eqn:grading ol}
L(\btau) = \bigoplus_{\bn \in \nn} \bigoplus_{d \in \BN} L(\btau)_{\bom-\bn,d}
\end{equation}
where $L(\btau)_{\bom-\bn,d}$ matches $L^{\br}_{-\bn,d}$ under the equality of vector spaces $L(\btau) = L^\br$.

\end{proposition}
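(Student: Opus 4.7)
The plan is to establish the set equality $J(\btau) = J^\br$ inside $\CS^-_{<0}$; once this is done, the extra $\BN$-grading on $J(\btau)$ and the decomposition \eqref{eqn:grading ol} follow immediately, since the defining condition of $J^\br$ in \eqref{eqn:j integral} is bigrade-homogeneous --- the pairing is nonzero only when the bigrades of its two arguments cancel, so $J^\br = \bigoplus_{\bn,d} J^\br_{\bn,d}$ tautologically. The key input is the factorization
$$
\prod_{i \in I}\prod_{a=1}^{n_i} \tau_i(z_{ia}) \;=\; Q_\btau(z_{ia}) \cdot \prod_{i \in I}\prod_{a=1}^{n_i} z_{ia}^{-r_i}, \qquad Q_\btau(z_{ia}) := \prod_{i,a} z_{ia}^{r_i}\tau_i(z_{ia}),
$$
where $Q_\btau$ is a color-symmetric polynomial in the $z_{ia}$'s (each $z^{r_i}\tau_i(z) \in \BC[z]$ has degree exactly $r_i$), and whose constant term $C$ --- the product, over $i \in I$, of the top $z^{-r_i}$-coefficient of $\tau_i$ raised to the power $n_i$ --- is nonzero precisely by the hypothesis $r_i = \ord \tau_i$.

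For the inclusion $J^\br \subseteq J(\btau)$, I take $F \in J^\br \cap \CS_{<0|-\bn}$ and any $E \in \CS_{\geq 0|\bn}$. Claim \ref{claim:preserved} ensures $EQ_\btau \in \CS_{\geq 0|\bn}$ (as $Q_\btau \in \CP_\bn$), so
$$
\Big\langle E(z_{ia})\textstyle\prod_{i,a}\tau_i(z_{ia}),\, S(F)\Big\rangle \;=\; \Big\langle (EQ_\btau)(z_{ia})\textstyle\prod_{i,a} z_{ia}^{-r_i},\, S(F)\Big\rangle \;=\; 0
$$
by the defining property of $J^\br$, hence $F \in J(\btau)$. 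For the reverse inclusion $J(\btau) \subseteq J^\br$, I fix $F \in J(\btau) \cap \CS_{<0|-\bn,d}$ and any $E' \in \CS_{\geq 0|\bn}$. By bigrade matching, I may assume $\vdeg E' = \br \cdot \bn - d$, as otherwise $\langle E' \prod z_{ia}^{-r_i}, S(F)\rangle$ vanishes automatically. Decomposing $\prod \tau$ into its $\vdeg$-homogeneous components $[\prod \tau]_{-k}$ for $k \in [0,\br\cdot\bn]$, bigrade matching kills every term in the expansion of $\langle E' \prod \tau, S(F)\rangle$ except the one with $k = \br\cdot\bn$, at which $[\prod \tau]_{-\br\cdot\bn} = C \prod z_{ia}^{-r_i}$. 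Hence
$$
\Big\langle E'\textstyle\prod_{i,a}\tau_i(z_{ia}),\, S(F)\Big\rangle \;=\; C\,\Big\langle E'\textstyle\prod_{i,a} z_{ia}^{-r_i},\, S(F)\Big\rangle;
$$
the LHS vanishes because $F \in J(\btau)$ and $E' \in \CS_{\geq 0|\bn}$, and $C \neq 0$, so the RHS vanishes, giving $F \in J^\br$.

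The main technical point to verify carefully will be the bookkeeping of vertical degrees: confirming both that multiplication by the color-symmetric polynomial $Q_\btau$ preserves the slope-zero condition on $E$ (which is exactly Claim \ref{claim:preserved}) and that vdeg-matching in the pairing isolates precisely the bottom component of $\prod \tau$, whose leading factor $C$ is nonzero by the hypothesis $r_i = \ord \tau_i$. No other input is required beyond these two observations and the explicit factorization of $\prod \tau$ above, after which the decomposition $L(\btau) = L^\br$ with its bigrading is purely formal.
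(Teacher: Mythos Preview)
Your argument for $J^{\br} \subseteq J(\btau)$ is correct and matches the paper's. The reverse inclusion, however, has a genuine gap: you take $F \in J(\btau) \cap \CS_{<0|-\bn,d}$, i.e.\ you assume $F$ is $\vdeg$-homogeneous. But at this stage $J(\btau)$ is \emph{not} known to be $\vdeg$-graded --- that is exactly the conclusion you want to draw, as you yourself note in your opening paragraph. For a general $F = \sum_d F_d \in J(\btau)$ and $E'$ homogeneous with $\vdeg E' = \br\cdot\bn - d_0$, your computation gives only
\[
0 \;=\; \Big\langle E'\textstyle\prod\tau,\,S(F)\Big\rangle \;=\; C\,\Big\langle E'\textstyle\prod z_{ia}^{-r_i},\,S(F_{d_0})\Big\rangle \;+\; \sum_{d<d_0}\Big\langle E'\,[\textstyle\prod\tau]_{-(\br\cdot\bn - d_0 + d)},\,S(F_d)\Big\rangle,
\]
a relation mixing several $F_d$'s rather than isolating $F_{d_0}$.

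The repair is easy: this system is upper-triangular in $d_0$. Take $d_0 = d_{\min}$, the least $d$ with $F_d \neq 0$; then only the $C$-term survives, so $F_{d_{\min}} \in J^{\br} \subseteq J(\btau)$, hence $F - F_{d_{\min}} \in J(\btau)$ with strictly larger minimal degree, and you induct. The paper avoids this induction by a different trick: writing $\tau_i(z) = z^{-r_i}P_i(z)$ with $P_i(0)\neq 0$, one picks polynomials $Q_i$ with $P_iQ_i = 1 + O(z^N)$ for $N$ large. Then $E'\prod Q_i(z_{ia}) \in \CS_{\geq 0|\bn}$ by Claim~\ref{claim:preserved}, and for any fixed $F$ (homogeneous or not) the identity $\langle (E'\prod Q_i)\prod\tau, S(F)\rangle = 0$ unwinds to $\langle E'\prod z_{ia}^{-r_i}, S(F)\rangle$ plus terms whose vertical degree is too high to pair with $F$.
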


\medskip

\begin{example}
\label{ex:3}

When $\fg = \fsl_2$, \eqref{eqn:formula ex 2} shows that $J^r$ is the set of $F$ such that
\begin{equation}
\label{eqn:formula ex 3}
\int_{|z_1| \ll \dots \ll |z_n|} \frac {z_1^{d_1} \dots z_n^{d_n} F(z_1,\dots,z_n)}{\prod_{1\leq a < b \leq n} \frac {z_b - z_a q^{-2}}{z_b-z_a}} = 0
\end{equation}
$\forall d_1,\dots,d_n \geq -r$, which is equivalent to $F \in (z_1\dots z_n)^{r+1} \BC[z_1,\dots,z_n]^{\emph{sym}}$. Comparing this with the fact $F \in \CS_{<0|-n} = z_1\dots z_n \BC[z_1,\dots,z_n]^{\emph{sym}}$, we have
$$
J^r_n = \begin{cases} \CS_{<0|-n}  = z_1\dots z_n \BC[z_1,\dots,z_n]^{\emph{sym}}& \text{if }r <0 \\ \CS_{<-r|-n} = (z_1\dots z_n)^{r+1} \BC[z_1,\dots,z_n]^{\emph{sym}} & \text{if }r \geq 0 \end{cases}
$$
for any $n > 0$. The $d$-grading of $J^r_n$ is given by homogeneous degree.

\end{example}

\medskip

\begin{proof} Let us write $\tau_i(z) = z^{-r_i} P_i(z)$, where $P_i$ is a polynomial in $z$ with non-zero constant term. By \eqref{eqn:psi pairing}, a shuffle element $F \in \CS_{<0|-\bn}$ lies in $J(\btau)$ if and only if  
\begin{equation}
\label{eqn:psi pairing 3}
\left \langle E(z_{i1},\dots,z_{in_i}) \prod_{i \in I} \prod_{a=1}^{n_i} \Big(z_{ia}^{-r_i} P_i(z_{ia}) \Big), S(F(z_{i1},\dots,z_{in_i})) \right \rangle = 0
\end{equation}
for all $E \in \CS_{\geq 0|\bn}$. Since the pairing is trivial unless the total vertical degree of its arguments is 0, the LHS of \eqref{eqn:psi pairing 3} is automatically 0 if the homogeneous degree of $E$ is large enough (for fixed $F$). Since one can always choose polynomials $Q_i$ such that $P_i(z)Q_i(z) = 1 + O(z^N)$ for large enough $N$, and $\CS_{\geq 0|\bn}$ is closed under multiplication with color-symmetric polynomials (Claim \ref{claim:preserved}), then \eqref{eqn:psi pairing 3} is equivalent to 
$$
\left \langle E(z_{i1},\dots,z_{in_i}) \prod_{i \in I} \prod_{a=1}^{n_i} z_{ia}^{-r_i} , S(F(z_{i1},\dots,z_{in_i})) \right \rangle = 0.
$$

\end{proof}

\medskip

\noindent Even though as a vector space, $L(\btau)$ only depends on $\br = \ord \btau$, as a $\CA^{\geq}$ module the whole of $\btau$ is important. This is reflected in the fact that in the RHS of \eqref{eqn:act 3 intro}, we cannot pinpoint exactly how the action of $E \in \CS_{\geq 0}$ affects the $d$ grading of $L(\btau)$.

\medskip

\begin{proposition}
\label{prop:bigrading}

The $\CA^{\geq}$ action affects the grading of $L(\btau)$ as in \eqref{eqn:act 1 intro}-\eqref{eqn:act 3 intro}.

\end{proposition}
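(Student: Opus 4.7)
The plan is to verify the three displayed formulas \eqref{eqn:act 1 intro}-\eqref{eqn:act 3 intro} separately, along the triangular decomposition $\CA^{\geq} = \CS^-_{<0} \otimes \CB^+_\infty \otimes \CS^+_{\geq 0}$ from \eqref{eqn:slope zero plus}. The bigrading on $L(\btau) = \CS^-_{<0}/J^{\br}$ comes from Proposition \ref{prop:integral}; it descends to the quotient because $J^{\br}$ is manifestly bigraded (inspection of \eqref{eqn:j integral} shows that its defining pairing is additive in both horizontal and vertical degrees).

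Case \eqref{eqn:act 1 intro} is immediate, since shuffle multiplication in $\CS^-_{<0}$ is additive in bidegree. For \eqref{eqn:act 2 intro}, I would use \eqref{eqn:shuffle minus commute} to commute $\ph^+_j(y)$ past $F$, and then invoke $\ph^+_j(y)\vac = \tau_j(y)\vac$ to obtain
\begin{equation*}
  \frac{\ph^+_j(y)}{\tau_j(y)} F\vac = F(z_{ia}) \vac \cdot \prod_{i \in I, 1 \leq a \leq n_i} \frac{\zeta_{ij}(z_{ia}/y)}{\zeta_{ji}(y/z_{ia})}.
\end{equation*}
The expansion \eqref{eqn:expansion zeta} of each factor in powers of $y^{-1}$ shows that the coefficient of $y^{-u}$ in the product is a color-symmetric polynomial $P_u(z_{ia})$ of total homogeneous degree $u$. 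By Claim \ref{claim:preserved}, $F \cdot P_u \in \CS^-_{<0|-\bn, d+u}$, which gives \eqref{eqn:act 2 intro}.

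The essential case is \eqref{eqn:act 3 intro}, for which I would apply \eqref{eqn:r on vac}:
\begin{equation*}
  EF\vac = \sum F_1\vac \cdot \left\langle E(z_{ia})\prod_{i,a}\tau_i(z_{ia}),\; S(F_2)\right\rangle,
\end{equation*}
with $\Delta(F) = \sum F_1 \otimes F_2$ and $F_1 \in \CS^-_{<0}$ by \eqref{eqn:slope 2}. The horizontal component of \eqref{eqn:act 3 intro} is immediate: hdeg-vanishing of the Hopf pairing restricts the sum to $F_2$ of horizontal degree $-\bm = -\text{hdeg}(E)$, so $F_1$ has horizontal degree $-(\bn - \bm)$ via the shuffle coproduct \eqref{eqn:coproduct shuffle minus}. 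For the vertical component, I would exploit the polynomial structure $\tau_i(z) = \sum_{c=0}^{r_i}\tau_{i,c}z^{-c}$, which means $\prod_{i,a}\tau_i(z_{ia})$ expands into Laurent monomial components of vdeg ranging over $\{0, -1, \dots, -\br\cdot\bm\}$. Combined with the bigradedness of the Hopf pairing and the fact that $S$ preserves vdeg, this forces $\text{vdeg}(F_2) = t - e$ for some $t \in [0, \br \cdot \bm]$; since vdeg is additive across $\Delta$ (the denominator expansion and the pairs $\ph^-_{j,c} \cdot z_{jb}^c$ each contribute zero net vdeg), one obtains $\text{vdeg}(F_1) = d + e - t$. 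Identifying $\bullet = t$ yields \eqref{eqn:act 3 intro}.

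The main obstacle is the vertical degree bookkeeping in the last case: one has to verify carefully that vdeg is additive across the shuffle coproduct (including the cancellation between the operator $\ph^-_{j,c}$ and the monomial $z_{jb}^c$ when expanding $\ph^-_j(z_{jb}) = \sum_c \ph^-_{j,c} z_{jb}^c$), and then trace this additivity through the Hopf pairing together with the polynomial structure of $\btau$ to pin down the range of $\bullet$ at $\br \cdot \text{hdeg}(E)$.
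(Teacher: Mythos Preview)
Your proposal is correct and follows essentially the same route as the paper's proof: the three cases are handled via the triangular decomposition, with \eqref{eqn:act 1 intro} immediate from additivity of the shuffle product, \eqref{eqn:act 2 intro} from commuting the Cartan series past $F$ and identifying the resulting degree-$u$ color-symmetric polynomial (the paper phrases this via the logarithmic generators $p_{j,u}$ of \eqref{eqn:p shuffle}, but your direct use of the $\zeta$-ratio expansion is equivalent), and \eqref{eqn:act 3 intro} from \eqref{eqn:r on vac} together with the vdeg-vanishing of the pairing. Your bookkeeping via the monomial degree $t$ in $\prod_{i,a}\tau_i(z_{ia})$ is just a reparameterization of the paper's direct bound $\vdeg F_2 \in [-\vdeg E,\ \br\cdot\bn - \vdeg E]$, and the additivity of vdeg across $\Delta$ that you flag as the main obstacle is indeed straightforward (the $\zeta$-denominators are homogeneous of degree $0$, and each $\ph^-_{j,c}\cdot z_{jb}^c$ contributes net vdeg $0$).
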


\medskip

\begin{proof} The action of $F \in \CS_{<0}^-$ on $F'\vac \in L(\btau)$ is simply given by shuffle multiplying $F$ with $F'$, which leads to formula \eqref{eqn:act 1 intro}. Similarly, the fact that
\begin{equation}
\label{eqn:action p}
[p_{j,u}, F]\vac \stackrel{\eqref{eqn:p shuffle}}= \sum_{i \in I} \text{constant}_{i,j} \cdot F(z_{i1}^u+\dots+z_{in_i}^u)\vac
\end{equation}
implies that the coefficient $\left[\frac {\ph^+_j(z)}{\tau_j(z)}\right]_{z^{-u}}$ sends $F\vac$ to $F\rho\vac$ for some degree $u$ polynomial $\rho$, which leads to \eqref{eqn:act 2 intro}. Finally, the action of $E \in \CS_{\geq 0|\bn}$ on $L(\btau)$ is
\begin{equation}
	\label{eqn:action e}
EF \vac \stackrel{\eqref{eqn:r on vac}}= F_1 \vac \cdot \left \langle E(z_{i1},\dots,z_{in_i}) \prod_{i \in I} \prod_{a=1}^{n_i} \tau(z_{ia}) , S ( F_2 ) \right \rangle 
\end{equation}
so the effect of $E$ on $F\vac$ is to send it to a multiple of $F_1\vac$. In terms of vertical degree, this corresponds to a decrease by $\vdeg F_2$. Since the pairing is non-zero only if the total vertical degree of its arguments is 0, then the only terms which appear in the right-hand side of \eqref{eqn:action e} are those for which $\vdeg F_2 \in [ - \vdeg E, \br \cdot \bn - \vdeg E]$. This precisely leads to formula \eqref{eqn:act 3 intro}. 
	
\end{proof}

\subsection{The positive case}
\label{sub:positive}

When $\br \in \BZ_{>0}^I$, we will construct a factorization of
$$
L^{\br} = \CS_{<0}^- \Big/ J^{\br}
$$
starting from the factorizations \eqref{eqn:factorization 1}-\eqref{eqn:factorization 4}, in order to prove the  Mukhin-Young conjecture \eqref{eqn:my i}. Consider the slope subalgebras $\CB_\mu^\pm$ defined with respect to $\br$ in \eqref{eqn:the r}. The following Lemma is the crucial step toward obtaining Theorem \ref{thm:my refined}.

\medskip

\begin{lemma} 
\label{lem:composition}

We have an isomorphism of $(-\nn) \times \BN$ graded vector spaces 
\begin{equation}
\label{eqn:composition}
L^{\br} = \bigotimes^\leftarrow_{\mu \in [-1,0)} \CB_\mu^-
\end{equation}

\end{lemma}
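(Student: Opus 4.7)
The plan is to identify $L^{\br} = \CS^-_{<0}/J^{\br}$ with the subspace $\mathring{M} := \bigotimes^{\leftarrow}_{\mu \in [-1, 0)} \CB^-_\mu \subset \CS^-_{<0}$ coming from the slope factorization \eqref{eqn:factorization 4}, which yields a vector space decomposition $\CS^-_{<0} = \mathring{M} \otimes \CS^-_{<-1}$. The goal is to show that the natural composition $\mathring{M} \hookrightarrow \CS^-_{<0} \twoheadrightarrow L^{\br}$ is an isomorphism of $(-\nn) \times \BN$-graded vector spaces. This reduces to identifying $J^{\br}$ with the kernel of the projection $\tilde{\pi}: \CS^-_{<0} \to \mathring{M}$ along the above factorization (i.e., the map that kills the $\CS^-_{<-1}$ factor via its counit).

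First I would translate the defining condition for $J^{\br}$ using the shift automorphism $\sigma$ of Proposition \ref{prop:easy}. Since $\br \in \BZ_{>0}^I$ and $\sigma$ takes $\CB^+_\mu$ isomorphically onto $\CB^+_{\mu+1}$, formula \eqref{eqn:factorization 3} yields $\sigma^{-1}(\CS^+_{\geq 0}) = \CS^+_{\geq -1}$. Hence $F \in J^{\br}$ if and only if $\langle E', S(F)\rangle = 0$ for every $E' \in \CS^+_{\geq -1}$. Next, I would establish a slope-matched factorization of the $S$-pairing, analogous to Proposition \ref{prop:pair slopes} for the direct pairing: writing $E' = \prod^{\leftarrow}_\mu E'_\mu \in \CS^+_{\geq -1}$ and $F = \prod^{\leftarrow}_\mu F_\mu \in \CS^-_{<0}$, the expected identity is $\langle E', S(F)\rangle = \prod_\mu \langle E'_\mu, S(F_\mu)\rangle$. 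The available slope ranges overlap only in $[-1, 0)$; outside this range one of $E'_\mu$ or $F_\mu$ is a scalar while the other has positive horizontal degree, so its pairing with the counit vanishes. Consequently $\langle E', S(F)\rangle$ depends on $F$ only through $\tilde{\pi}(F) \in \mathring{M}$, and the non-degeneracy of each slope-$\mu$ pairing $\CB^+_\mu \otimes \CB^-_\mu \to \BC$ (a direct consequence of Proposition \ref{prop:pair slopes}) forces $J^{\br} = \ker \tilde{\pi}$, yielding $L^{\br} \cong \mathring{M}$.

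The main obstacle will be establishing the $S$-pairing analogue of Proposition \ref{prop:pair slopes}. The argument for the direct pairing exploits compatibility of the coproduct with the slope filtration (e.g., \eqref{eqn:slope 2}) together with the contour integral \eqref{eqn:pairing shuffle minus} with contours ordered $|z_1| \gg \cdots \gg |z_n|$; since the formula \eqref{eqn:antipode pairing shuffle} for the $S$-pairing differs only by reversed contours ($\ll$ in place of $\gg$) and an overall sign, one expects an analogous factorization using the $\leftarrow$ order in place of $\rightarrow$. An alternative route via the identity $\langle E, S(F)\rangle = \langle S^{-1}(E), F\rangle$ from \eqref{eqn:antipode pairing} is also possible, but requires careful bookkeeping of Cartan contributions that rescale the pairing and compensate for the fact that $S^{-1}$ introduces terms of lower naive slope.
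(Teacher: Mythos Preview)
Your strategy coincides with the paper's through the shift-automorphism step: both arrive at $F\in J^{\br}\Leftrightarrow\langle E',S(F)\rangle=0$ for all $E'\in\CS^+_{\geq-1}$, and both target the identification $J^{\br}=\mathring{M}\cdot\CS_{<-1}^{-,\text{max}}$ inside the $\leftarrow$-factorization of $\CS^-_{<0}$ (where $\CS_{<-1}^{-,\text{max}}=\bigoplus_{\bn>\b0}\CS_{<-1|-\bn}$).

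The divergence is in how the antipode is handled. You propose an $S$-pairing analogue of Proposition~\ref{prop:pair slopes} for the $\leftarrow$ order and rightly flag it as the main obstacle; note also that to conclude you would need non-degeneracy of the $S$-pairing (not the direct pairing you cite) on each $\CB^+_\mu\otimes\CB^-_\mu$. The paper bypasses both issues. Since $\langle E',S(F)\rangle$ is literally the direct pairing of $E'$ with the element $S(F)$, it applies the already-established orthogonality \eqref{eqn:orthogonal} and Proposition~\ref{prop:pair slopes} (for the $\rightarrow$ factorization) to $S(F)$ itself, obtaining $F\in J^{\br}\Leftrightarrow S(F)\in\CS_{<-1}^{-,\text{max}}\cdot\CS^-_{\geq-1}$. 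It then inverts with a single observation: the slope--coproduct compatibility \eqref{eqn:slope 2} together with the defining property of the antipode yield $S^{-1}(\CS_{<-1}^{-,\text{max}})\subset\CS^{\leq}\cdot\CS_{<-1}^{-,\text{max}}$, whence $J^{\br}=\CS^-_{<0}\cap\CS^-_{\geq-1}\cdot\CS_{<-1}^{-,\text{max}}=\mathring{M}\cdot\CS_{<-1}^{-,\text{max}}$. So your approach would work if the $S$-factorization can be established, but the paper's route is shorter and uses only lemmas already in hand; it is essentially a realization of your option (b), but kept on the $F$ side rather than the $E$ side.
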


\begin{proof} Let $\CS_{< - 1}^{-,\text{max}} = \bigoplus_{\bn > \b0} \CS_{<-1|-\bn}$. Formulas \eqref{eqn:factorization 1}-\eqref{eqn:factorization 4} imply that
$$
\CS^- =  \CS_{< -1}^- \cdot \CS_{ \geq -1}^- = \CS_{\geq -1}^- \oplus \CS_{< - 1}^{-,\text{max}} \cdot \CS_{\geq -1}^-
$$
while formulas \eqref{eqn:orthogonal} and \eqref{eqn:j integral} imply that some $F \in \CS^-_{<0}$ lies in $J^{\br}$ if and only 
$$
S(F) \in \CS_{< - 1}^{-,\text{max}} \cdot \CS_{\geq -1}^- \quad \Leftrightarrow \quad F \in S^{-1} ( \CS_{\geq -1}^- ) \cdot S^{-1} \left(\CS_{< - 1}^{-,\text{max}}\right)
$$
However, property \eqref{eqn:slope 2} and the defining property of the antipode implies that
$$
S^{-1} \left(\CS_{< - 1}^{-,\text{max}}\right) \subset \CS^{\leq} \cdot \CS_{< - 1}^{-,\text{max}}
$$
so we conclude that
\begin{equation}
\label{eqn:compare 1}
F \in J^{\br} \quad \Leftrightarrow \quad  F \in \CS^- \cdot \CS_{< - 1}^{-,\text{max}} \stackrel{\eqref{eqn:factorization 1}}= \CS^-_{\geq -1} \cdot \CS_{< - 1}^{-,\text{max}}
\end{equation}
However, we recall that 
\begin{equation}
\label{eqn:compare 2}
F \in \CS_{<0}^- \stackrel{\eqref{eqn:factorization 4}}=  \left( \bigotimes^\leftarrow_{\mu \in [-1,0)} \CB_\mu^- \right) \cdot \CS_{<-1}^-
\end{equation}
so we may replace $\CS_{\geq -1}^-$ in the right-hand side of \eqref{eqn:compare 1} by $\otimes^\leftarrow_{\mu \in [-1,0)} \CB_\mu^-$. Once we make this change, factoring \eqref{eqn:compare 2} by \eqref{eqn:compare 1} precisely implies \eqref{eqn:composition}.

\end{proof}

\subsection{A variant}
\label{sub:variant}

The following will be an important player in the next Section.

\medskip

\begin{definition}
	\label{def:variant}
	
	For any rational $\ell$-weight $\bpsi$, define \footnote{We will only consider $\oL(\bpsi)$ as a graded vector space, though it is naturally a module for the algebra $\mathring{\CA}^{\geq} = \CS_{<0}^- \otimes \CB^+_\infty \otimes \oCS^+_{\geq 0}$, as per the analogue of Propositions \ref{prop:a are algebras} and \ref{prop:ideal}.}
	\begin{equation}
		\label{eqn:ol}
		\oL(\bpsi) = \CS_{<0}^- \Big/ \oJ(\bpsi)
	\end{equation}
	where $\oJ(\bpsi) = \bigoplus_{\bn \in \nn} \oJ(\bpsi)_{\bn}$ consists of those $F \in \CS_{<0|-\bn}$ such that
	\begin{equation}
		\label{eqn:psi pairing 2}
		\left \langle E(z_{i1},\dots,z_{in_i}) \prod_{i \in I} \prod_{a=1}^{n_i} \psi_i(z_{ia}) , S(F(z_{i1},\dots,z_{in_i})) \right \rangle = 0
	\end{equation}
	for all $E \in \oCS_{\geq 0|\bn}$ (see Subsection \ref{sub:gen}).
	
\end{definition}

\medskip

\noindent When $\fg$ is of finite type, Proposition \ref{prop:non-negative} implies that for all $\ell$-weights $\bpsi$, we have
\begin{equation}
	\label{eqn:are equal}
\oJ(\bpsi) = J(\bpsi) \quad \Rightarrow \quad	\oL(\bpsi) = L(\bpsi)
\end{equation}
For a general Kac-Moody $\fg$, the fact that $\oCS_{\geq 0} \subseteq \CS_{\geq 0}$ implies that $\oJ(\bpsi) \supseteq J(\bpsi)$ for any $\ell$-weight $\psi$, and so there is a surjective map $L(\bpsi) \twoheadrightarrow \oL(\bpsi)$. We write
\begin{equation}
\label{eqn:graded ol}
\oL(\bpsi) = \bigoplus_{\bn \in \nn} \oL(\bpsi)_{\bom-\bn}
\end{equation}
where $\bom = \lead(\bpsi)$. Moreover, the natural analogues of Subsection \ref{sub:integral} hold. If
\begin{equation}
	\label{eqn:j integral variant}
	\oJ^{\br} := \left\{F \in \CS_{<0}^- \text{ s.t. } \left \langle E (z_{ia}) \prod_{i,a} z_{ia}^{-r_i}, S(F(z_{ia})) \right \rangle = 0, \forall E \in \oCS_{\geq 0}^+ \right\} 
\end{equation}
\begin{equation}
	\label{eqn:l integral variant}
	\oL^{\br} = \bigoplus_{\bn \in \nn} \bigoplus_{d \in \BN} \oL^{\br}_{-\bn,d} :=  \bigoplus_{\bn \in \nn} \bigoplus_{d \in \BN} \CS_{<0|-\bn,d} \Big/ \oJ^{\br}_{\bn,d} =  \CS^-_{<0} \Big/ \oJ^{\br} 
\end{equation}
for any $\br \in \zz$, then we may define for any polynomial $\ell$-weight $\btau$ 
\begin{equation}
	\label{eqn:grading ol variant}
	\oL(\btau) = \bigoplus_{\bn \in \nn} \bigoplus_{d \in \BN} \oL(\btau)_{\bom-\bn,d}
\end{equation}
where $\oL(\btau)_{\bom-\bn,d}$ matches $\oL^{\ord \btau}_{-\bn,d}$ under the equality of vector spaces $\oL(\btau) = \oL^{\ord \btau}$.

\bigskip

\section{Characters and residues}
\label{sec:characters}

\medskip

\subsection{$q$-characters}
\label{sub:q-characters general}

In \eqref{eqn:simple}, we gave a description of $L(\bpsi)$ as a graded vector space, which we will now use in order to compute its character and $q$-character. Because of Theorem \ref{thm:is rational}, these quantities are well-defined for any rational $\ell$-weight $\bpsi$, see \eqref{eqn:q-character definition}. In more detail, formula \eqref{eqn:weight simple} implies that the character is given by
\begin{equation}
\label{eqn:character general}
\chi(L(\bpsi)) = \sum_{\bn \in \nn} \dim_{\BC} \left(\CS_{<0|-\bn} \Big/ J(\bpsi)_{\bn} \right) [\bom - \bn]
\end{equation}
where $\bom=\lead(\bpsi)$. Similarly, for the variant in Subsection \ref{sub:variant}, we have
\begin{equation}
	\label{eqn:character general variant}
	\chi(\oL(\bpsi)) = \sum_{\bn \in \nn} \dim_{\BC} \left(\CS_{<0|-\bn} \Big/ \oJ(\bpsi)_{\bn} \right) [\bom - \bn]
\end{equation}
because of formulas \eqref{eqn:ol} and \eqref{eqn:graded ol}.

\medskip

\begin{lemma}
\label{lem:is module}

For any $\bn \in \nn$ and any $\ell$-weight $\bpsi$, the vector spaces
\begin{equation}
\label{eqn:is module}
\CS_{<0|-\bn} \Big/ J(\bpsi)_{\bn} \quad \text{and} \quad \CS_{<0|-\bn} \Big/ \oJ(\bpsi)_{\bn}
\end{equation}
are modules over $\CP_{\bn} = \BC[z_{i1},\dots,z_{in_i}]^{\emph{sym}}_{i \in I}$.

\end{lemma}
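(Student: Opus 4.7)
The plan is to observe, via Claim \ref{claim:preserved}, that $\CS_{<0|-\bn}$ is already a $\CP_\bn$-module under the ordinary multiplication $(F,\rho)\mapsto F\rho$ of rational functions. The lemma then reduces to verifying that both $J(\bpsi)_\bn$ and $\oJ(\bpsi)_\bn$ are $\CP_\bn$-stable subspaces of $\CS_{<0|-\bn}$.

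The main tool will be the compatibility identity
\begin{equation*}
\Big\langle E \prod_{i \in I} \prod_{a=1}^{n_i} \psi_i(z_{ia}),\, S(F\rho) \Big\rangle \;=\; \Big\langle (E\rho) \prod_{i \in I} \prod_{a=1}^{n_i} \psi_i(z_{ia}),\, S(F) \Big\rangle,
\end{equation*}
valid for any $\rho \in \CP_\bn$, $E \in \CS_{\bn}$ and $F \in \CS_{-\bn}$, where the multiplications by $\rho$ and by $\prod \psi$ are ordinary multiplications of rational functions on both sides. This identity follows from the contour-integral formula \eqref{eqn:antipode pairing shuffle} after expanding $E$ as a linear combination of shuffle products $e_{i_1,d_1}*\dots*e_{i_n,d_n}$ via Proposition \ref{prop:weaker}: both sides evaluate to the same contour integral, up to a common sign. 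Together with Claim \ref{claim:preserved}, which ensures $E\rho \in \CS^+_{\geq 0|\bn}$ whenever $E \in \CS^+_{\geq 0|\bn}$, this immediately yields $F \in J(\bpsi)_\bn \Rightarrow F\rho \in J(\bpsi)_\bn$, settling the first half of the lemma.

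The harder part will be the analogue for $\oJ(\bpsi)_\bn$: by the same identity, it reduces to showing that $\oCS^+_{\geq 0|\bn}$ is itself closed under ordinary multiplication by $\CP_\bn$, which is non-trivial because $\oCS^+_{\geq 0}$ is defined as a subalgebra under the shuffle product rather than the ordinary product. I will establish this closure via the triangular decomposition of the subalgebra $\mathring{\CA}^{\geq} := \CS^-_{<0} \otimes \CB^+_\infty \otimes \oCS^+_{\geq 0}$ of $\CS$ (an analogue of Proposition \ref{prop:a are algebras}): for any Cartan generator $p_{j,u} \in \CB^+_\infty$ and any $E \in \oCS^+_{\geq 0}$, relation \eqref{eqn:p shuffle} gives
\begin{equation*}
[p_{j,u}, E] \;=\; E \cdot \rho_{j,u}, \qquad \rho_{j,u} \;:=\; \sum_{i \in I}\bigl(q^{ud_{ij}} - q^{-ud_{ij}}\bigr)\bigl(z_{i1}^u + \cdots + z_{in_i}^u\bigr) \,\in\, \CP_\bn,
\end{equation*}
and since the left-hand side lies in $\mathring{\CA}^{\geq}$ while the right-hand side has strictly positive horizontal degree and no Cartan contribution, uniqueness of the triangular factorization forces $E\rho_{j,u} \in \oCS^+_{\geq 0}$. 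A final verification that for non-root-of-unity $q$ the matrix $(q^{ud_{ij}}-q^{-ud_{ij}})_{i,j \in I}$ has full rank for each $u \geq 1$ will show that the polynomials $\{\rho_{j,u}\}_{j,u}$ recover all power sums $z_{i1}^u + \cdots + z_{in_i}^u$ by linear combinations, hence generate $\CP_\bn$ as a $\BC$-algebra, completing the proof.
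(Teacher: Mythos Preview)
Your treatment of $J(\bpsi)_\bn$ is correct and matches the paper's approach. The problem is in your argument for $\oJ(\bpsi)_\bn$, where you try to show $\oCS^+_{\geq 0|\bn}$ is $\CP_\bn$-stable.

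First, there is a circularity: you invoke the triangular decomposition of $\mathring{\CA}^\geq = \CS^-_{<0} \otimes \CB^+_\infty \otimes \oCS^+_{\geq 0}$ as a subalgebra ``an analogue of Proposition \ref{prop:a are algebras}''. But if you trace through the proof of that proposition, the step where one commutes $p_{j,u}$ past $\CS^+_{\geq 0}$ relies precisely on Claim \ref{claim:preserved}, i.e.\ on $\CS^+_{\geq 0}$ being $\CP_\bn$-stable. The analogue for $\mathring{\CA}^\geq$ would require $\oCS^+_{\geq 0}$ to be $\CP_\bn$-stable, which is exactly what you are trying to prove.

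Second, and more seriously, your final matrix-rank claim fails in general. The lemma is stated for arbitrary Kac-Moody $\fg$, but for any $\fg$ whose symmetrized Cartan matrix $(d_{ij})$ is singular (e.g.\ affine types), the matrix $(q^{ud_{ij}}-q^{-ud_{ij}})_{i,j\in I}$ is singular as well. For instance, for $\widehat{\fsl}_2$ one has $d_{11}=d_{22}=2$, $d_{12}=d_{21}=-2$, and the resulting $2\times 2$ matrix has identical rows up to sign, hence determinant zero for every $u$. So the $\rho_{j,u}$'s do not span all single-color power sums, and your argument cannot recover the full $\CP_\bn$-action.

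The paper bypasses both issues with a one-line observation: for each fixed $i\in I$ and $u\geq 1$, ordinary multiplication by the single-color power sum $\rho=\sum_a z_{ia}^u$ is itself a derivation of the shuffle product \eqref{eqn:mult} (immediate from the formula, since $\rho$ is color-symmetric and splits as a sum over the two groups of variables). This derivation sends $z_{i1}^d\mapsto z_{i1}^{d+u}$ and $z_{j1}^d\mapsto 0$ for $j\neq i$, hence maps generators of $\oCS^+_{\geq 0}$ to generators, and therefore preserves $\oCS^+_{\geq 0}$. No commutator with $p_{j,u}$, no triangular decomposition, and no matrix inversion are needed.
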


\medskip

\begin{proof} We must show that the quotients in \eqref{eqn:is module} are preserved by multiplication by color-symmetric polynomials. The fact that $\CS_{<0|-\bn}$ is preserved was showed in Claim \ref{claim:preserved}. The fact that $J(\bpsi)_{\bn}$ and $\oJ(\bpsi)_{\bn}$ are preserved is because they are defined via the duality property \eqref{eqn:psi pairing} with respect to the sets $\CS_{\geq 0|\bn}$ and $\oCS_{\geq 0|\bn}$ (respectively) which are themselves preserved by multiplication with color-symmetric polynomials: the first one because of Claim \ref{claim:preserved}, and the second one because multiplication with the color-symmetric polynomial $\rho = \sum_a z_{ia}^u$ is a derivation (i.e. $\rho (E*E') = (\rho E) * E' + E * (\rho E')$), which clearly sends the generator $z_{i1}^d$ to $z_{i1}^{d+u}$ and $z_{j1}^d$ to $z_{j1}^d$ for all $j \neq i$.

\end{proof}

\noindent As a consequence of Lemma \ref{lem:is module}, we see that (recall that $\BC^{\bn} = \prod_{i \in I} \BC^{n_i}/S_{n_i}$)
\begin{align}
&\CS_{<0|-\bn} \Big/ J(\bpsi)_{\bn} = \bigoplus_{\bx \in \BC^{\bn}} \left( \CS_{<0|-\bn} \Big/ J(\bpsi)_{\bn} \right)_{\bx} \label{eqn:eigenspace decomposition} \\
&\CS_{<0|-\bn} \Big/ \oJ(\bpsi)_{\bn} = \bigoplus_{\bx \in \BC^{\bn}} \left( \CS_{<0|-\bn} \Big/ \oJ(\bpsi)_{\bn} \right)_{\bx} \label{eqn:eigenspace decomposition variant}
\end{align} 
where the direct summands in the RHS corresponding to $\bx = (x_{ia})_{i \in I, 1 \leq a \leq n_i}$ are the generalized eigenspaces on which $z_{i1}^u+\dots+z^u_{in_i}$ acts by $x_{i1}^u+\dots+x_{in_i}^u$ for all $i \in I$ and all $u \geq 1$. By \eqref{eqn:k shuffle}, \eqref{eqn:p shuffle} and the formula
\begin{equation}
\label{eqn:power series}
q^{-d_{ij}} \exp \left(\sum_{u=1}^{\infty} \frac {x_{ia}^u(q^{-ud_{ij}} - q^{ud_{ij}})}{uz^u} \right) = \frac {z - x_{ia}q^{d_{ij}}}{zq^{d_{ij}}-x_{ia}}
\end{equation}
for all $i,j \in I$ and $a \in \{1,\dots,n_i\}$, we conclude that the $q$-character is given by \footnote{In the traditional notation \eqref{eqn:old a}, the $\ell$-weights that appear in the formulas below are $$\left[ \left( \prod_{i \in I} \prod_{a=1}^{n_i} \frac {z - x_{ia}q^{d_{ij}}}{zq^{d_{ij}}-x_{ia}} \right)_{j \in I} \right] = \prod_{i \in I} \prod_{a=1}^{n_i} A_{i,x_{ia}}^{-1}$$ Formulas \eqref{eqn:q-character general}-\eqref{eqn:q-character general variant} are proved by observing that the commuting operators $\kappa_j, p_{j,u}$ act on the generalized eigenspaces in the RHS of \eqref{eqn:eigenspace decomposition}-\eqref{eqn:eigenspace decomposition variant} with eigenvalue prescribed by \eqref{eqn:power series}.}
\begin{align}
&\chi_q(L(\bpsi)) = [\bpsi] \sum_{\bn \in \nn} \sum_{\bx \in \BC^\bn} \mu_{\bx}^{\bpsi}  \left[\left( \prod_{i \in I} \prod_{a=1}^{n_i} \frac {z - x_{ia}q^{d_{ij}}}{zq^{d_{ij}}-x_{ia}} \right)_{j \in I} \right] \label{eqn:q-character general} \\
&\chi_q(\oL(\bpsi)) = [\bpsi] \sum_{\bn \in \nn} \sum_{\bx \in \BC^\bn} \omu_{\bx}^{\bpsi}  \left[\left( \prod_{i \in I} \prod_{a=1}^{n_i} \frac {z - x_{ia}q^{d_{ij}}}{zq^{d_{ij}}-x_{ia}} \right)_{j \in I} \right] \label{eqn:q-character general variant}
\end{align}
(in the formulas above, the product of symbols $[\bpsi]$ is defined as in \eqref{eqn:product}), where
\begin{align}
&\mu_{\bx}^{\bpsi} =  \dim_{\BC} \left(\CS_{<0|-\bn} \Big/ J(\bpsi)_{\bn} \right)_{\bx} \label{eqn:q-character general multiplicity} \\
&\omu_{\bx}^{\bpsi} =  \dim_{\BC} \left(\CS_{<0|-\bn} \Big/ \oJ(\bpsi)_{\bn} \right)_{\bx} \label{eqn:q-character general variant multiplicity} 
\end{align}
The problem of calculating $q$-characters boils down to calculating the multiplicities $\mu_{\bx}^{\bpsi}$  and $\omu_{\bx}^{\bpsi}$ above. We will see that the latter multiplicities are easier to compute than the former, although for finite type $\fg$ they are equal due to \eqref{eqn:are equal}.
 
\medskip

\subsection{The first ideal}
\label{sub:ideal one}

Our next goal is to prove Theorem \ref{thm:main} on the modified $q$-characters $\chi_q(\oL(\bpsi))$, but before we do so, we will need to introduce some notation. 

\medskip

\begin{definition}
\label{def:ideal one}

For any $i_1,\dots,i_n \in I$ and any $x_1,\dots,x_n \in \BC^*$, we define
\begin{equation}
\label{eqn:ideal m}
\fm_{i_1,\dots,i_n|x_1,\dots,x_n}^{\bpsi} \subseteq \frac {\BC[z_1,\dots,z_n]}{\prod_{1\leq s < t \leq n, i_s \neq i_t} (z_s-z_t)}
\end{equation}
to be the set of $F(z_1,\dots,z_n)$ such that
\begin{equation}
\label{eqn:residue x}
\underset{z_n=x_n}{\emph{Res}} \dots \underset{z_1=x_1}{\emph{Res}}  \frac {F (z_1,\dots,z_n)G(z_1,\dots,z_n)}{\prod_{1\leq a < b \leq n} \zeta_{i_bi_a} \left(\frac {z_b}{z_a} \right)} \prod_{a=1}^n \frac {\psi_{i_a}(z_a)}{z_a} = 0
\end{equation}
for any $G \in \BC[z_1,\dots,z_n]$. 

\end{definition}

\noindent While the condition above may seem complicated, it boils down to $F$ lying in the kernel of a finite family of differential operators, followed by specialization at $(z_1,\dots,z_n) = (x_1,\dots,x_n)$. These differential operators may be recursively computed quite explicitly, but more important for us will be the following remarks:

\medskip

\begin{itemize}[leftmargin=*]

\item The quotient corresponding to the inclusion \eqref{eqn:ideal m} is a finite-dimensional $\BC[z_1,\dots,z_n]$-module supported at $(x_1,\dots,x_n) \in (\BC^*)^n$, because multiplying $F$ by a sufficiently high power of any $z_a-x_a$ would annihilate the residue \eqref{eqn:residue x}. 

\medskip

\item For fixed $i_1,\dots,i_n \in I$, there are only finitely many $x_1,\dots,x_n \in \BC^*$ for which the inclusion \eqref{eqn:ideal m} fails to be an equality. This is because in order to obtain a non-zero residue in \eqref{eqn:residue x}, each complex number $x_b$ must either be a pole of $\psi_{i_b}(z)$ or equal to $x_a q^{-d_{i_ai_b}}$ for some $a < b$. This phenomenon is the reason for the criterion in the last paragraph of Subsection \ref{sub:q-char intro}.

\end{itemize}

\medskip

\noindent Given an ordering $i_1,\dots,i_n$ of $\bn$ (see Definition \ref{def:i ordering}), we say that $x_1,\dots,x_n \in \BC^*$ is an ordering of 
$$
\bx = (x_{ia})_{i \in I, 1\leq a \leq n_i} \in (\BC^*)^{\bn} = \prod_{i \in I} (\BC^*)^{n_i}/S_{n_i}
$$
if the multisets $\{x_a | i_a = j\}$ and $\{x_{j1},\dots,x_{jn_j}\}$ are equal for all $j \in I$. Then we let
\begin{equation}
\label{eqn:ideal m sym}
\fm^{\bpsi}_{\bx} = \CV_{\bn}  \mathop{\bigcap_{i_1,\dots,i_n \text{ an ordering of }\bn}}_{x_1,\dots,x_n \text{ an ordering of }\bx} \fm_{i_1,\dots,i_n|x_1,\dots,x_n}^{\bpsi}
\end{equation}
In other words, $F \in \CV_{\bn}$ lies in $\fm^{\bpsi}_{\bx}$ if \eqref{eqn:residue x} vanishes for all orderings of $\bn$ and of $\bx$. By the first bullet above, the $\CP_{\bn}$-module $\CV_{\bn}/\fm^{\bpsi}_{\bx}$ is supported at $\bx \in (\BC^*)^{\bn}$.

\medskip

\begin{example}
\label{ex:4}

Let us consider $\fg = \fsl_2$ and $\psi(z) = \frac {zq^k-q^{-k}}{z-1}$ for $k \geq 0$, i.e. the highest $\ell$-weight of the dimension $k+1$ representation. The expression
$$
\underset{z_n=x_n}{\emph{Res}} \dots \underset{z_1=x_1}{\emph{Res}} F (z_1,\dots,z_n)G(z_1,\dots,z_n) \prod_{1\leq a < b \leq n} \frac {z_b-z_a}{z_b-z_aq^{-2}} \prod_{a=1}^n \frac {z_aq^k-q^{-k}}{z_a(z_a-1)}
$$
has a unique pole at $z_1=1$, so the residue can be non-zero only if $x_1=1$. The factor $z_2-z_1$ in the numerator annihilates the apparent pole at $z_2 = 1$, so there is a unique pole at $z_2 = q^{-2}$ and so the residue can be non-zero only if $x_2 = q^{-2}$. Repeating this argument forces $x_3 = q^{-4}$, $x_4=q^{-6}$, $...$ in order to have a non-zero residue, but as soon as we encounter the variable $z_{k+1}$ all poles are gone because of $z_{k+1}q^k - q^{-k}$ in the numerator. We conclude
\begin{equation}
\label{eqn:formula ex 4}
\fm_{x_1,\dots,x_n}^\psi = \begin{cases} (z_1-x_1,\dots,z_n-x_n) &\text{if } n \leq k \text{ and } x_a = q^{-2(a-1)} \\ \BC[z_1,\dots,z_n] &\text{otherwise} \end{cases}
\end{equation}

\end{example}

\medskip

\subsection{The second ideal}
\label{sub:ideal two}

Having dealt with non-zero complex numbers $x_1,\dots,x_n$ in the previous Subsection, we will now deal with the completely opposite case. 

\medskip

\begin{definition}
\label{def:ideal two}

For any $i_1,\dots,i_n \in I$ and any $\br = (r_i)_{i \in I} \in \zz$, we define
\begin{equation}
\label{eqn:ideal n}
\fn_{i_1,\dots,i_n}^{\br} \subseteq \frac {\BC[z_1,\dots,z_n]}{\prod_{1\leq s < t \leq n, i_s \neq i_t} (z_s-z_t)}
\end{equation}
to be the set of $F(z_1,\dots,z_n)$ such that
\begin{equation}
\label{eqn:residue 0}
\underset{z_n=0}{\emph{Res}} \dots \underset{z_1=0}{\emph{Res}} \frac {F (z_1,\dots,z_n)G(z_1,\dots,z_n)}{\prod_{1\leq a < b \leq n} \zeta_{i_bi_a} \left(\frac {z_b}{z_a} \right)} \prod_{a=1}^n \frac {\psi_{i_a}(z_a)}{z_a}  = 0
\end{equation}
for any $G \in \BC[z_1,\dots,z_n]$, where $\psi_i(z)$ is any power series in $z^{-r_i} (\BC^* + z\BC[[z]])$.

\end{definition}

\noindent Let us explain why the set of $F$ satisfying the condition \eqref{eqn:residue 0} depends only on $\br$ and not on $\bpsi$, which is implicit in Definition \ref{def:ideal two} (compare with Subsection \ref{sub:variant}). The vanishing of the residue at $z_1 = \dots = z_n = 0$ above is equivalent to the fact that the power series expansion (as $|z_1| \ll \dots \ll |z_n| \ll 1$) of the rational function
$$
\frac {F (z_1,\dots,z_n)}{\prod_{1\leq a < b \leq n} \zeta_{i_bi_a} \left(\frac {z_b}{z_a} \right)} \prod_{a=1}^n \psi_{i_a}(z_a)  
$$
does not contain any monomials $z_1^{k_1} \dots z_n^{k_n}$ with $k_1,\dots,k_n \leq 0$. However, if this condition is to be violated for some $(k_1,\dots,k_n)$, let us assume $k_1+\dots+k_n$ is minimal with respect to this property. Thus, any violation can be considered to arise from the lowest order terms $z_a^{-r_{i_a}}$ of the power series $\psi_{i_a}(z_a)$, hence
\begin{equation}
\label{eqn:ideal zero}
F \in \fn_{i_1,\dots,i_n}^{\br} \quad \Leftrightarrow \quad \left[ \frac {F (z_1,\dots,z_n) \prod_{a=1}^n z_a^{-r_{i_a}}}{\prod_{1\leq a < b \leq n} \zeta_{i_bi_a} \left(\frac {z_b}{z_a} \right)} \right]_{|z_1| \ll \dots \ll |z_n|}
\end{equation}
does not contain any monomials $z_1^{k_1} \dots z_n^{k_n}$ with $k_1,\dots,k_n \leq 0$. Having shown that indeed the set \eqref{eqn:ideal n} only depends on $\br$ and not on $\bpsi$, let us define
\begin{equation}
\label{eqn:ideal n sym}
\fn^{\br}_{\bn} = \CV_{\bn} \bigcap_{i_1,\dots,i_n \text{ an ordering of }\bn} \fn_{i_1,\dots,i_n}^\br
\end{equation}
In other words, $F \in \CV_{\bn}$ lies in $\fn^{\br}_{\bn}$ if \eqref{eqn:residue 0} vanishes for all orderings of $\bn$.
It is clear that the $\CP_{\bn}$-module $\CV_{\bn}/\fn_{\bn}^{\br}$ is supported at $\b0_{\bn} = (0,\dots,0)$. 

\medskip

\begin{example}
\label{ex:5}

When $\fg = \fsl_2$, by a logic akin to Example \ref{ex:3}, we have
\begin{equation}
\label{eqn:formula ex 5}
\fn^r = \begin{cases} z_1 \dots z_n \BC[z_1,\dots,z_n] &\text{if }r < 0 \\ (z_1 \dots z_n)^{r+1} \BC[z_1,\dots,z_n] &\text{if }r \geq 0 \end{cases}
\end{equation}

\end{example}

\medskip

\subsection{The main theorem}
\label{sub:main theorem}

We are now poised to prove Theorem \ref{thm:main} on the modified $q$-characters \eqref{eqn:q-character general variant} for any rational $\ell$-weight $\bpsi$. Recall that $\ord \bpsi$ denotes the $I$-tuple of the orders of the poles at $z=0$ of the rational functions $\psi_i(z)$. We will prove that the multiplicities in formula \eqref{eqn:q-character general variant} satisfy 
\begin{equation}
\label{eqn:factor}
\omu_{\bx}^{\bpsi} = \omu_{\by}^{\bpsi} \onu^{\ord\bpsi}_{\bn - \bm}
\end{equation}
for any $\bx = (\by,\b0_{\bn-\bm})$ with $\by \in (\BC^*)^{\bm}$, where
\begin{equation}
\label{eqn:multiplicity mu}
\omu^{\bpsi}_{\by} = \dim_{\BC} \left(\CS_{<0|-\bm} \Big/ \CS_{<0|-\bm} \cap \fm_{\by}^{\bpsi} \right)
\end{equation}
and
\begin{equation}
\label{eqn:multiplicity nu}
\onu^{\br}_{\bp} = \dim_{\BC} \left(\CS_{<0|-\bp} \Big/ \CS_{<0|-\bp} \cap \fn_{\bp}^{\br} \right) \ 
\end{equation}
for all $\bp \in \nn$ and $\br \in \zz$. 

\medskip

\begin{example}
\label{ex:6}

Consider $\fg = \fsl_2$ and $\psi(z) = \frac {zq^k-q^{-k}}{z-1}$ for $k \geq 0$ (note that $\oJ(\psi) = J(\psi)$ due to Proposition \ref{prop:non-negative}). For any $\by \in (\BC^*)^{(m)}$, \eqref{eqn:formula ex 4}  implies 
\begin{equation}
\label{eqn:formula ex 6}
\omu_{\by}^\psi = \begin{cases} 1 &\text{if } m \leq k \text{ and } \by = (1,\dots,q^{-2(m-1)}) \\ 0 &\text{otherwise} \end{cases}
\end{equation}
and \eqref{eqn:formula ex 5} implies that $\onu^0_p = \delta_{p0}$. Plugging the above formulas into \eqref{eqn:q-character general} or \eqref{eqn:q-character general variant} recovers the well-known formula for the $q$-character of $\oL(\psi) = L(\psi)$:
\begin{equation}
\label{eqn:formula ex 6 bis}
\chi_q(L(\psi)) = [\psi] \sum_{n=0}^k A_1^{-1} A_{q^{-2}}^{-1} \dots A_{q^{-2(n-1)}}^{-1}
\end{equation}

\end{example}

\medskip

\begin{proof} \emph{of Theorem \ref{thm:main}:} By analogy with \eqref{eqn:the computation}, for any $F \in \CS_{<0|-\bn}$ we have
\begin{equation}
\label{eqn:the equivalence}
F \in \mathring{J}(\bpsi)_{\bn} \quad \Leftrightarrow \quad 0 = \int_{1 \ll |z_1| \ll \dots \ll |z_n|} \frac {z_1^{d_1} \dots z_n^{d_n} F (z_1,\dots,z_n)}{\prod_{1\leq a < b \leq n} \zeta_{i_bi_a} \left(\frac {z_b}{z_a} \right)} \prod_{a=1}^n \psi_{i_a}(z_a)
\end{equation}
for all orderings $i_1,\dots,i_n$ of $\bn$ and for all $d_1,\dots,d_n \geq 0$. By moving the contours of integration toward 0, the integral above picks up two kinds of residues: those at non-zero complex numbers $x \in \BC^*$ and those at 0. Putting this together, we have
\begin{equation}
\label{eqn:residue}
F \in \oJ(\bpsi)_{\bn} \quad \Leftrightarrow \quad 0 = \sum_{\{1,\dots,n\} = \{s_1<\dots<s_m\} \sqcup \{t_1< \dots < t_{n-m}\}}
\end{equation}
$$
\sum_{y_1,\dots,y_m \in \BC^*} \underset{z_{s_m}=y_m}{\text{Res}} \dots \underset{z_{s_1}=y_1}{\text{Res}} \underset{z_{t_{n-m}}=0}{\text{Res}} \dots \underset{z_{t_{1}}=0}{\text{Res}}   \frac {z_1^{d_1} \dots z_n^{d_n} F (z_1,\dots,z_n)}{\prod_{1\leq a < b \leq n} \zeta_{i_bi_a} \left(\frac {z_b}{z_a} \right)} \prod_{a=1}^n \frac {\psi_{i_a}(z_a)}{z_a}
$$
for all orderings $i_1,\dots,i_n$ of $\bn$ and all $d_1,\dots,d_n \geq 0$ (the denominator $\frac 1{z_a}$ in the RHS is there due to our convention on integrals in Subsection \ref{sub:pairing shuffle}).

\medskip

\begin{claim}
\label{claim:residue}

The vanishing condition in the right-hand side of \eqref{eqn:residue} holds for all orderings $i_1,\dots,i_n$ of $\bn$ and all $d_1,\dots,d_n \geq 0$ if and only if for any partition
$$
\{1,\dots,n\} = \{s_1 < \dots < s_m\} \sqcup \{t_1 < \dots < t_{n-m} \}
$$
and any $y_1,\dots,y_m \in \BC^*$, the partial power series expansion 
\begin{equation}
\label{eqn:expansion}
\Big[ F(z_1,\dots,z_n) \Big]_{|z_{s_1}| ,\dots,|z_{s_m}| \gg |z_{t_1}|, \dots , |z_{t_{n-m}}|}
\end{equation}
is an infinite sum of terms in
\begin{equation}
\label{eqn:tensor ideal}
\fm^{\bpsi}_{i_{s_1},\dots,i_{s_m} | y_1,\dots,y_m} \boxtimes \fn_{i_{t_1},\dots,i_{t_{n-m}}}^{\eord \bpsi}
\end{equation}
where 
\begin{align}
&\fm^{\bpsi}_{i_{s_1},\dots,i_{s_m}|y_1,\dots,y_m} \subseteq \frac {\BC[z_{s_1},\dots,z_{s_m}]}{\prod_{1 \leq u < v \leq m, i_{s_u} \neq i_{s_v}} (z_{s_u} - z_{s_v})} \label{eqn:ideal m mod}\\
&\fn_{i_{t_1},\dots,i_{t_{n-m}}}^{\eord \bpsi} \subseteq \frac {\BC[z_{t_1},\dots,z_{t_{n-m}}]}{\prod_{1 \leq u < v \leq n-m, i_{t_u} \neq i_{t_v}} (z_{t_u} - z_{t_v})} \label{eqn:ideal n mod}
\end{align}
are the sets \eqref{eqn:ideal m} and \eqref{eqn:ideal n}, respectively, with the obvious changes of variables.

\end{claim}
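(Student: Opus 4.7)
\emph{Plan of proof.} The (if) direction is immediate: if the partial power series expansion $[F]_{|z_{s_1}|,\dots,|z_{s_m}| \gg |z_{t_1}|,\dots,|z_{t_{n-m}}|}$ of \eqref{eqn:expansion} lies (as an infinite sum) in $\fm^\bpsi_{i_{s_1},\dots,i_{s_m}|y_1,\dots,y_m} \boxtimes \fn^{\eord \bpsi}_{i_{t_1},\dots,i_{t_{n-m}}}$, then plugging into the integrand of \eqref{eqn:residue} and evaluating the iterated residue at the base point with $z_{s_a}=y_a$ and $z_{t_b}=0$ yields zero term by term, by the defining property \eqref{eqn:residue x} of $\fm$ and \eqref{eqn:residue 0} of $\fn$; summing over all partitions and all tuples $(y_1,\dots,y_m) \in (\BC^*)^m$ then gives the desired total vanishing.

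For the (only if) direction, the key is to separate the different summands of \eqref{eqn:residue} by variation of the exponents $d_1,\dots,d_n$. For a fixed index $a \in \{1,\dots,n\}$, the residue contribution of a partition $S' \sqcup T'$ with base point $(y'_1,\dots,y'_{m'},\b0)$ scales in $d_a$ as $(y'_a)^{d_a}$ (times at most polynomial corrections in $d_a$ coming from higher derivatives) when $a \in S'$, whereas it is supported only in a bounded range of $d_a$ when $a \in T'$ (since sufficiently large powers of $z_a$ annihilate the residue at $z_a = 0$). By taking $d_a$ very large one therefore separates the summands with $a \in S'$ from those with $a \in T'$. Iterating this device over all indices $a$, and combining with the linear independence (via a Vandermonde argument in the $d_{s_a}$) of residues at distinct base points $(y_1,\dots,y_m) \in (\BC^*)^m$ for a fixed partition, isolates each summand of \eqref{eqn:residue} individually.

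Having reduced to the individual vanishing of each residue $\mathrm{Res}_{z_{s_m}=y_m}\cdots\mathrm{Res}_{z_{t_1}=0}$, the final step is to identify this vanishing with the claimed ideal-factorization property. In the regime $|z_{s_\bullet}|\gg|z_{t_\bullet}|$, the cross-factors $\zeta_{i_bi_a}(z_b/z_a)^{-1}$ with $a \in S$, $b \in T$ expand as power series in the ratios $z_{t_b}/z_{s_a}$; combined with the expansion \eqref{eqn:expansion} of $F$ itself, the integrand becomes an infinite sum of products of a function in the $z_{s_\bullet}$'s times a function in the $z_{t_\bullet}$'s. The iterated residue then factors as the $S$-part residue at $(y_1,\dots,y_m)$ composed with the $T$-part residue at $\b0$, and requiring vanishing against arbitrary test polynomials $\prod z_a^{d_a}$ (with $d_a \geq 0$) decouples term by term into the defining conditions \eqref{eqn:residue x} at $\by$ and \eqref{eqn:residue 0} at $\b0$, i.e.\ into membership of each factor in $\fm^\bpsi_{i_{s_1},\dots,i_{s_m}|y_1,\dots,y_m}$ and $\fn^{\eord\bpsi}_{i_{t_1},\dots,i_{t_{n-m}}}$ respectively.

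The main obstacle I expect lies in the separation argument of paragraph two: one must carefully track how the iterated residues depend on each $d_a$ in the presence of the cross-factors $\zeta_{i_bi_a}$, which couple variables of different ``types'' ($S'$ vs.\ $T'$), and confirm that the $d_a \to \infty$ asymptotics strictly distinguishes the two cases $a \in S'$ and $a \in T'$ even after summing the potentially infinite collection of partitions and base points $\by'$. This same coupling of variables is exactly what dictates the use of the formal expansion in \eqref{eqn:expansion}, rather than a polynomial-level factorization, in the claim statement.
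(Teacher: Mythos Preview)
Your proposal is correct and arrives at the same conclusion, but the separation step you use for the ``only if'' direction differs from the paper's. You propose an asymptotic/Vandermonde argument: track how each iterated residue behaves as a function of $d_a$ (exponentially in $(y'_a)^{d_a}$ when $a\in S'$, eventually zero when $a\in T'$), then invoke linear independence of generalized exponentials to isolate each term. The paper instead exploits the same freedom in the $d_a$'s more directly: since the monomials $z_1^{d_1}\cdots z_n^{d_n}$ with $d_a\geq 0$ span all polynomials, the vanishing in \eqref{eqn:residue} for all such monomials is equivalent to vanishing with $F$ replaced by $F$ times any polynomial. There are only finitely many poles $(x_{1,v},\dots,x_{n,v})$ of the full integrand, so for each fixed pole $u$ one multiplies by $\prod_s\prod_{\alpha\neq x_{s,u}}(z_s-\alpha)^N$ with $N$ large, which annihilates every residue except the one at the $u$-th pole. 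This isolates each summand in a single stroke, bypassing the iterated asymptotic bookkeeping and the cross-$\zeta$ coupling you flagged as the main obstacle. Your route works, but the paper's polynomial-multiplication trick is shorter and sidesteps precisely the difficulty you anticipated.
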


\noindent In \eqref{eqn:expansion}, we expand all the denominators $z_{s_a} - z_{t_b}$, but we leave the denominators $z_{s_a}-z_{s_b}$ and $z_{t_a}-z_{t_b}$ untouched. The terms of the expansion have progressively lower degree in the variables $z_{s_1},\dots,z_{s_m}$ and higher degree in $z_{t_1}, \dots,z_{t_{n-m}}$. Since a term of high enough degree in $z_{t_1}, \dots,z_{t_{n-m}}$ will automatically land in the set $\fn$ from \eqref{eqn:ideal n mod}, the condition that the expansion \eqref{eqn:expansion} is an infinite sum of tensors \eqref{eqn:tensor ideal} actually boils down to checking finitely many degrees. Moreover, this condition is equivalent to the expansion
\begin{equation}
\label{eqn:expansion alternative}
\left[ \frac {F(z_1,\dots,z_n)}{\prod_{s_a < t_b} \zeta_{i_{t_b}i_{s_a}} \left(\frac {z_{t_b}}{z_{s_a}} \right) \prod_{t_b < s_a} \zeta_{i_{s_a}i_{t_b}} \left(\frac {z_{s_a}}{z_{t_b}} \right)} \right]_{|z_{s_1}| ,\dots,|z_{s_m}| \gg |z_{t_1}|, \dots , |z_{t_{n-m}}|} 
\end{equation}
lying in \eqref{eqn:tensor ideal}, because the functions $\zeta_{ij}(x)$ are regular and non-zero at 0 and $\infty$.

\medskip

\noindent Let us prove Claim \ref{claim:residue}. The ``if" statement is obvious, because it implies that the sum of residues in \eqref{eqn:residue} vanishes termwise. For the ``only if" statement, let us observe that the rational function
$$
\frac {z_1^{d_1} \dots z_n^{d_n} F (z_1,\dots,z_n)}{\prod_{1\leq a < b \leq n} \zeta_{i_bi_a} \left(\frac {z_b}{z_a} \right)} \prod_{a=1}^n \frac {\psi_{i_a}(z_a)}{z_a}
$$
has finitely many poles $(x_{1,v}, \dots, x_{n,v})$, where $v$ runs from 1 to some $p$. Fix any $u \in \{1,\dots,p\}$. Since we are allowed to choose the integers $d_1,\dots,d_n \geq 0$ arbitrarily, then the vanishing of the sum in \eqref{eqn:residue} implies the vanishing of the same sum with
$$
F(z_1,\dots,z_n) \quad \text{replaced by} \quad F(z_1,\dots,z_n) \prod_{s=1}^n  \prod_{\alpha \in \{x_{s,1},\dots,x_{s,p}\} \backslash \{x_{s,u}\}} (z_s - \alpha)^N
$$
$\forall N \in \BN$. If $N$ is chosen high enough, one of the factors $(z_s - x_{s,v})^N$ will cancel out the pole $(x_{1,v}, \dots, x_{n,v})$, for any $v \neq u$. Therefore, \eqref{eqn:residue} implies the vanishing of the residue at the $n$-tuple $(x_{1,u}, \dots, x_{n,u})$. If we separate the 0 coordinates from the non-zero coordinates of the aforementioned $n$-tuple, this is equivalent to 
$$
\underset{z_{s_m}=y_m}{\text{Res}} \dots \underset{z_{s_1}=y_1}{\text{Res}} \underset{z_{t_{n-m}}=0}{\text{Res}} \dots \underset{z_{t_{1}}=0}{\text{Res}}   \frac {z_1^{d_1} \dots z_n^{d_n} F (z_1,\dots,z_n) (\text{linear factors})}{\prod_{1\leq a < b \leq n} \zeta_{i_bi_a} \left(\frac {z_b}{z_a} \right)} \prod_{a=1}^n \frac {\psi_{i_a}(z_a)}{z_a} = 0
$$
for some fixed $\{1,\dots,n\} = \{s_1<\dots<s_m\} \sqcup \{t_1< \dots < t_{n-m}\}$, fixed $y_1,\dots,y_m \in \mathbb{C}^*$ and all $d_1,\dots,d_n\geq 0$. The ``linear factors" in the numerator do not vanish at the iterated residue in the formula above, so they may simply be ignored. Recalling the definition of the ideals \eqref{eqn:ideal m} and \eqref{eqn:ideal n}, the condition above is precisely equivalent to the fact that the expansion \eqref{eqn:expansion alternative} belongs to \eqref{eqn:tensor ideal}. As we already mentioned, this is equivalent to \eqref{eqn:expansion} lying in \eqref{eqn:tensor ideal}, so this concludes the proof of Claim \ref{claim:residue}.

\medskip

\noindent Consider any $F \in \CS_{<0|-\bn}$. For any $\bx = (\by,\b0_{\bn-\bm}) \in \BC^{\bn}$ with $\by \in (\BC^*)^{\bm}$, the fact that the power series expansion \eqref{eqn:expansion} lies in \eqref{eqn:tensor ideal} for all orderings $i_{s_1}, \dots, i_{s_m}$ of $\bm$, $i_{t_1},\dots,i_{t_{n-m}}$ of $\bn - \bm$ and $y_1,\dots,y_m$ of $\by$ boils down to
\begin{equation}
\label{eqn:tensor ideal symmetric}
\Big[ F(z_{i1},\dots,z_{in_i}) \Big]_{|z_{i1}|,\dots,|z_{im_i}| \gg |z_{i,m_i+1}|,\dots,|z_{in_i}|} \in \fm^{\bpsi}_{\by} \boxtimes \fn^{\ord \bpsi}_{\bn-\bm}
\end{equation}
with the sets $\fm^{\bpsi}_{\by}$ and $\fn^{\ord \bpsi}_{\bn-\bm}$ of \eqref{eqn:ideal m sym} and \eqref{eqn:ideal n sym} being understood to consist of functions in the variables $z_{i1},\dots,z_{im_i}$ and $z_{i,m_i+1},\dots,z_{in_i}$, respectively. Therefore, Claim \ref{claim:residue} implies that
$$
\oJ(\bpsi)_{\bn} = \bigcap_{\bm = \b0}^{\bn} \bigcap_{\by \in (\BC^*)^{\bm}} \Big(F \in \CS_{<0|-\bn} \text{ s.t. \eqref{eqn:tensor ideal symmetric} holds} \Big)
$$
By taking the appropriate quotient, we infer that
$$
\CS_{<0|-\bn} \Big/ \mathring{J}(\bpsi)_{\bn} = \CS_{<0|-\bn} \Big/ \bigcap_{\bm = \b0}^{\bn} \bigcap_{\by \in (\BC^*)^{\bm}} \Big(F \in \CS_{<0|-\bn} \text{ s.t. \eqref{eqn:tensor ideal symmetric} holds} \Big) = 
$$
\begin{equation}
\label{eqn:quotients quotients}
= \bigoplus_{\bm = \b0}^{\bn} \bigoplus_{\by \in (\BC^*)^{\bm}} \CS_{<0|-\bn} \Big/ \Big(F \in \CS_{<0|-\bn} \text{ s.t. \eqref{eqn:tensor ideal symmetric} holds} \Big)
\end{equation}
The last equality holds because for any $\bm \leq \bn$ and $\by \in (\BC^*)^{\bm}$, the set $\{F \text{ satisfying \eqref{eqn:tensor ideal symmetric}}\}$ contains any function which vanishes to high enough order at $(\{z_{i1},\dots,z_{in_i}\} = \{y_{i1},\dots,y_{im_i},0,\dots,0\})_{i \in I}$, and so the direct summands in \eqref{eqn:quotients quotients} are finite-dimensional $\CP_{\bn}$-modules supported at different points $(\by,\b0_{\bn-\bm}) \in \BC^{\bn}$. To prove \eqref{eqn:factor}, thus concluding the proof of Theorem \ref{thm:main}, we must show the following two claims for any $\bm \leq \bn$, $\by \in (\BC^*)^{\bm}$:
$$
\CS_{<0|-\bn} \Big/ \Big(F \in \CS_{<0|-\bn} \text{ s.t. \eqref{eqn:tensor ideal symmetric} holds} \Big) \xrightarrow{\text{expansion as }|z_{i1}|,\dots,|z_{im_i}| \gg |z_{i,m_i+1}|,\dots,|z_{in_i}|}
$$
\begin{equation}
\label{eqn:claim 1}
\Big( \CS_{-\bm} \Big/ \CS_{-\bm} \cap \fm_{\by}^{\bpsi} \Big) \boxtimes \Big( \CS_{<0|-\bn + \bm} \Big/ \CS_{<0|-\bn + \bm} \cap \fn_{\bn - \bm}^{\ord \bpsi} \Big)
\end{equation}
is an isomorphism, and the natural inclusion map
\begin{equation}
\label{eqn:claim 2}
 \Big( \CS_{<0|-\bm} \Big/ \CS_{<0|-\bm} \cap \fm_{\by}^{\bpsi} \Big) \rightarrow \Big( \CS_{-\bm} \Big/ \CS_{-\bm} \cap \fm_{\by}^{\bpsi} \Big)
\end{equation}
is an isomorphism. Indeed, the latter claim implies that the dimensions of the left and right-hand sides of \eqref{eqn:claim 1} are precisely the left and right-hand sides of \eqref{eqn:factor}.

\medskip

\noindent First of all, let us explain why the expansion map in \eqref{eqn:claim 1} is well-defined: as we already mentioned, the expansion of any $F(z_{i1},\dots,z_{in_i})$ as $|z_{i1}|,\dots,|z_{im_i}| \gg |z_{i,m_i+1}|,\dots,|z_{in_i}|$ is given by terms of progressively lower degree in $\{z_{ia}\}_{a \leq m_i}$ and progressively higher degree in $\{z_{jb}\}_{b > m_j}$. As the degree in the latter variables becomes sufficiently large, the corresponding terms will automatically land in the set $\fn_{\bn - \bm}^{\ord \bpsi}$. Therefore, while the expansion in \eqref{eqn:claim 1} consists of infinitely many terms, all but finitely many of them have second tensor factor 0. Moreover, any term in the expansion has the property that the $\{z_{jb}\}_{b > m_j}$ variables determine a shuffle element of slope $<0$, which is simply a restatement of \eqref{eqn:slope 2}. 

\medskip

\noindent It is clear that both arrows in \eqref{eqn:claim 1} and \eqref{eqn:claim 2} are injective. To prove that \eqref{eqn:claim 2} is surjective, let us take a shuffle element $G\in \CS_{-\bm}$. For $M$ large enough, we have by \eqref{eqn:factorization 1} and the second statement of Proposition \ref{prop:easy}
$$
G(z_{i1},\dots,z_{im_i}) \prod_{i \in I} \prod_{a=1}^{m_i} z_{ia}^M \in \CS_{<0|-\bm}
$$
However, for $N$ large enough, we also have
\begin{equation}
\label{eqn:big power}
\left( \prod_{i \in I} \prod_{a=1}^{m_i} z_{ia}^M - \prod_{i \in I} \prod_{a=1}^{m_i} y_{ia}^M \right)^N G(z_{i1},\dots,z_{im_i})  \in \fm_{\by}^{\bpsi}
\end{equation}
because the set $\fm_{\by}^{\bpsi}$ is associated to the maximal ideal corresponding to the point $(\{z_{i1},\dots,z_{im_i}\} = \{y_{i1},\dots,y_{im_i}\})_{i \in I}$. Since $y_{ia} \neq 0$ for all $i,a$, the formulas above prove that $G$ is equal to a linear combination of elements in $\CS_{<0|-\bm}$ modulo the ideal $\fm_{\by}^{\bpsi}$. This precisely establishes the surjectivity of \eqref{eqn:claim 2}.

\medskip

\noindent Let us now prove that \eqref{eqn:claim 1} is surjective. It suffices to show that for all homogeneous $G \in \CS_{-\bm}$ and $H \in \CS_{<0|-\bn+\bm}$, there exists some $F \in \CS_{<0|-\bn}$ such that
$$
\Big[ F(z_{i1},\dots,z_{in_i}) \Big]_{|z_{ia}|_{a\leq m_i} \gg |z_{jb}|_{b > m_j}} = G(z_{i1},\dots,z_{im_i}) \boxtimes H(z_{i,m_i+1},\dots,z_{in_i}) + \dots
$$
where the ellipsis denotes terms with second tensor factor of homogeneous degree higher than that of $H$, or terms where either the first or the second tensor factors lie in $\fm_{\by}^{\bpsi}$ or $\fn_{\bn-\bm}^{\ord \bpsi}$, respectively. For large enough natural numbers $M,N$, we choose
$$
F = \left\{ \left[1 - \left(1 - \prod_{i \in I} \prod_{a=1}^{m_i} \frac {z_{ia}^M}{y_{ia}^M} \right)^N \right] G \right\} * H 
$$
If $M$ is large enough, the shuffle element in the curly brackets is $G$ times a multiple of $\prod_{i,a} z_{ia}^M$, and thus has slope $<0$ by the second statement of Proposition \ref{prop:easy}. Therefore, $F$ as defined above also has slope $<0$. As we expand the shuffle product $\{ \dots  G\} * H$ when $\bm$ of the variables are much larger than the other $\bn - \bm$ variables, the summand in the shuffle product which has the lowest possible degree in the latter variables is 
\begin{multline*}
\left\{ \left[1 - \left(1 - \prod_{i \in I} \prod_{a=1}^{m_i} \frac {z_{ia}^M}{y_{ia}^M} \right)^N \right] G (\{z_{ia}\}_{a \leq m_i}) \right\} \boxtimes H (\{z_{jb}\}_{b > m_j}) \stackrel{\eqref{eqn:big power}}\equiv \\ \equiv (G (\{z_{ia}\}_{a \leq m_i}) \text{ mod }\fm_{\by}^{\bpsi}) \boxtimes H (\{z_{jb}\}_{b > m_j})
\end{multline*}
This is because if one of the ``small" variables is chosen among the variables of $G$, then it will come with an exponent $\geq M$, and the number $M$ can be chosen to be much larger than the homogeneous degree of $H$.

\end{proof}

\medskip

\noindent We note that formulas \eqref{eqn:multiplicity mu} and \eqref{eqn:multiplicity nu} are much easier to compute than \eqref{eqn:q-character general variant multiplicity}, as they remove $\oJ(\bpsi)_{\bn}$ from the picture and only ask to intersect shuffle algebras with the combinatorially explicit sets $\fm^{\bpsi}_{\by}$ and $\fn^{\ord \bpsi}_{\bp}$. In particular, for $\fg$ of finite type they can in principle be calculated on a computer using the explicit description of the shuffle algebra in \eqref{eqn:e}. Together with \eqref{eqn:are equal}, this gives a shuffle algebra approach for the computation of $q$-characters of simple representations in category $\CO$ for quantum affine algebras.

\medskip

\subsection{$q$-characters for polynomial $\ell$-weights}
\label{sub:first corollary}

Formulas \eqref{eqn:intro q-character polynomial variant} and \eqref{eqn:intro q-character polynomial} say that the $q$-character of simple modules $L(\btau)$ (and their variants $\oL(\btau)$) associated to a polynomial $\ell$-weight $\btau$ encodes the same information as the usual character. By \eqref{eqn:q-character general} and \eqref{eqn:q-character general variant}, this boils down to the fact that the $\CP_{\bn}$-modules
\begin{equation}
\label{eqn:two modules}
\CS_{<0|-\bn} \Big/ J(\btau)_{\bn} \quad \text{and} \quad \CS_{<0|-\bn} \Big/ \oJ(\btau)_{\bn}
\end{equation}
are supported at the origin $\b0_{\bn} \in \BC^{\bn}$. In turn, this is due to the second bullet in Subsection \ref{sub:ideal one}, since a Laurent polynomial has no non-zero poles; alternatively, the modules \eqref{eqn:two modules} are finite-dimensional and graded by vertical degree (see Subsection \ref{sub:integral} for the module on the left, and Subsection \ref{sub:variant} for the module on the right). If we let
\begin{align} 
&\chi^{\br} = \sum_{\bn \in \nn} \dim_{\BC} \left( \CS_{<0|-\bn} \Big/J^{\br}_{\bn} \right) [-\bn] \label{eqn:chi r} \\
&\ochi^{\br} = \sum_{\bn \in \nn} \dim_{\BC} \left( \CS_{<0|-\bn} \Big/\oJ^{\br}_{\bn} \right) [-\bn] \label{eqn:chi r variant}
\end{align} 
for any $\br \in \zz$, then Proposition \ref{prop:integral} implies formula \eqref{eqn:intro q-character polynomial}, while its analogue in Subsection \ref{sub:variant} implies \eqref{eqn:intro q-character polynomial variant}. Note that the dimensions which appear in the RHS of \eqref{eqn:chi r variant} are equal to the numbers $\onu^{\br}_{\bn}$ of \eqref{eqn:multiplicity nu} for all $\bn \in \nn$ and all $\br \in \zz$, which is simply because the condition on $F$ in \eqref{eqn:j integral variant} is equivalent via Lemma \ref{lem:antipode pairing shuffle} to 
$$
\underset{z_n=0}{\text{Res}} \dots \underset{z_1=0}{\text{Res}} \frac {F (z_1,\dots,z_n)G(z_1,\dots,z_n)}{\prod_{1\leq a < b \leq n} \zeta_{i_bi_a} \left(\frac {z_b}{z_a} \right)} \prod_{a=1}^n z_a^{-r_{i_a}-1}  = 0
$$
for all orderings $i_1,\dots,i_n$ of $\bn$ and all polynomials $G$.

\medskip

\begin{proof} \emph{of Proposition \ref{prop:no pole}:} Since $\chi^{\br}$ measures the character of \eqref{eqn:l integral}, the fact that $\chi^{\br} = 1$ if $\br \in - \nn$ is an immediate consequence of the fact that the pairing in \eqref{eqn:j integral} is trivially 0 (as $\vdeg E \geq 0$, $-r_i \geq 0$ and $\vdeg F > 0$). $\ochi^{\br} = 1$ is analogous.

\end{proof}

\begin{proof} \emph{of Corollary \ref{cor:regular}:} Immediate from \eqref{eqn:intro factor series} and Proposition \ref{prop:no pole}.
	
\end{proof}

\medskip

\subsection{Refined characters}
\label{sub:refined characters}

Since the vector spaces in \eqref{eqn:two modules} are graded by vertical degree (as explained in Subsections \ref{sub:integral} and \ref{sub:variant}), we can therefore express the refined characters in \eqref{eqn:refined intro q-character polynomial} and \eqref{eqn:refined intro q-character polynomial variant} for any polynomial $\ell$-weight $\btau$ as
\begin{align}
	&\chi_{q}^{\text{ref}}(L(\btau)) = [\btau] \sum_{\bn \in \nn} \sum_{d=0}^{\infty} \dim_{\BC} \left(\CS_{<0|-\bn,d} \Big/ J(\btau)_{\bn,d} \right) [-\bn] v^d \label{eqn:refined} \\
	&\chi_{q}^{\text{ref}}(\oL(\btau)) = [\btau] \sum_{\bn \in \nn} \sum_{d=0}^{\infty} \dim_{\BC} \left(\CS_{<0|-\bn,d} \Big/ \oJ(\btau)_{\bn,d} \right) [-\bn] v^d \label{eqn:refined variant}
\end{align} 
We are now ready to prove Theorem \ref{thm:my refined}, which states that for $\fg$ of finite type,
\begin{equation}
	\label{eqn:my refined final}
	\chi_{q}^{\text{ref}}(L(\btau)) = [\btau] \prod_{\balpha \in \Delta^+} \prod_{d=1}^{\br \cdot \balpha} \frac 1{1-[-\balpha]v^d}
\end{equation}
for any polynomial $\ell$-weight $\btau$ such that $\br = \ord \btau \in \BZ_{>0}^I$. 

\medskip 

\begin{proof} \emph{of Theorem \ref{thm:my refined}:} Proposition \ref{prop:slope affine} and the decomposition \eqref{eqn:a geq} imply that
	$$
\Upsilon \Big( \Phi \left(\UUaffg\right) \cap \UUm \Big) = \CS^-_{<0}
	$$
	The well-known PBW basis for the algebra on the left (see \cite[Subsection 5.22]{NT}, based on \cite{B0}) gives us the following formula for its graded character
	\begin{equation}
		\label{eqn:pbw affine}
		\chi^{\text{ref}}(\binfty) := \sum_{\bn \in \nn} \sum_{d=0}^{\infty} \dim_{\BC}\left(\CS_{<0|-\bn,d} \right)[-\bn]v^d  = \prod_{\balpha \in \Delta^+} \prod_{d=1}^{\infty} \frac 1{1-[-\balpha]v^d}
	\end{equation}
	On the other hand, by Lemma \ref{lem:composition}, we have
$$
\frac {\chi_{q}^{\text{ref}}(L(\btau))}{[\btau]} =: \chi^{\text{ref}}(\br) = \sum_{\bn \in \nn} \sum_{d=0}^{\infty} \dim_{\BC} \left(\otimes^\leftarrow_{\mu \in [-1,0)} \CB_\mu^- \right)_{-\bn,d} [-\bn]v^d
$$
Proposition \ref{prop:easy} therefore implies that for all $k \in \BN$
	$$
	\sigma^k \left(\chi^{\text{ref}}(\br)\right) =\sum_{\bn \in \nn} \sum_{d=k(\br \cdot \bn)}^{\infty} \dim_{\BC} \left(\otimes^\leftarrow_{\mu \in [-k-1,-k)} \CB_\mu^- \right)_{-\bn,d} [-\bn]v^d
	$$
	where we make the convention that $\sigma([-\bn]v^d) = [-\bn]v^{d+\br\cdot \bn}$. Therefore, the factorization \eqref{eqn:factorization 4} implies that
	$$
	\chi^{\text{ref}}(\binfty) = \prod_{k=0}^{\infty} \sigma^k \left(\chi^{\text{ref}}(\br)\right)
	$$
We conclude that
	$$
	\chi^{\text{ref}}(\br) = \frac {\chi^{\text{ref}}(\binfty)}{\sigma(\chi^{\text{ref}}(\binfty))} \stackrel{\eqref{eqn:pbw affine}}= \frac {\prod_{\balpha \in \Delta^+} \prod_{d=1}^{\infty} (1-[-\balpha]v^d)^{-1}}{\prod_{\balpha \in \Delta^+} \prod_{d=1}^{\infty} (1-[-\balpha]v^{d+\br\cdot \balpha})^{-1}}
	$$
	which precisely implies \eqref{eqn:my refined final}. 
	
\end{proof} 

\begin{remark}
\label{rem:general}

The only place we needed $\fg$ to be of finite type is \eqref{eqn:pbw affine}, where we used the well-known PBW basis of quantum affine algebras to calculate the graded dimension of $\CS_{<0}^-$. For a general Kac-Moody Lie algebra $\fg$, we conjecture that
\begin{equation}
\label{eqn:conj}
\chi^{\emph{ref}}(\binfty) \stackrel{?}= \prod_{\bn \in \nn \backslash \b0} \prod_{d=1}^{\infty} \left( \frac 1{1-[-\bn] v^d} \right)^{a_{\fg,\bn}}
\end{equation}
where the non-negative integers $\{a_{\fg,\bn}\}_{\bn \in \nn \backslash \b0}$ are defined by the formula
\begin{equation}
\label{eqn:conj hor}
\sum_{\bn \in \nn} \dim_{\BC}(\CB_{0|\pm \bn})[-\bn] = \prod_{\bn \in \nn \backslash \b0} \left( \frac 1{1-[-\bn] } \right)^{a_{\fg,\bn}}
\end{equation}
(the formulas above are inspired by \cite{Dav, Mo, N R-matrix, OS}, which pertain to a related setting that includes that of simply laced $\fg$). Once formula \eqref{eqn:conj} would be established, the proof of Theorem \ref{thm:my refined} runs through and establishes the following analogue of \eqref{eqn:my refined final}
\begin{equation}
	\label{eqn:my refined final general}
\chi_{q}^{\emph{ref}}(L(\btau)) = [\btau] \prod_{\bn \in \nn \backslash \b0} \prod_{d=1}^{\br \cdot \bn} \left( \frac 1{1-[-\bn]v^d} \right)^{a_{\fg,\bn}}
\end{equation}

\end{remark}

\medskip

\subsection{The last corollary}
\label{sub:last corollary}

We will now prove Corollary \ref{cor:main}. Fix a regular $\ell$-weight $\bpsi$, and note that $\chi_q(\oL(\bpsi))$ only has contributions from $\bx \in (\BC^*)^{\bn}$, due to Corollary \ref{cor:regular}. 

\medskip

\begin{definition}
\label{def:monochrome}

We will say that a polynomial $\ell$-weight $\btau$ is $\bpsi$-\textbf{monochrome} if every $\bx \in (\BC^*)^{\bn}$ is either
\begin{align}
&(\bpsi,\btau)\text{-black, i.e. } \prod_{i \in I} \prod_{a = 1}^{n_i} \tau_i(x_{ia}) \neq 0, \text{ or} \label{eqn:black} \\
&(\bpsi,\btau)\text{-white, i.e. } (\emph{any element of }\CS_{-\bn})\prod_{i \in I} \prod_{a = 1}^{n_i} \tau_i(z_{ia}) \in \fm^{\bpsi}_{\bx} \label{eqn:white}
\end{align}

\end{definition}

\medskip

\noindent The meaning of the condition above is that multiplying a shuffle element $F \in \CS_{<0|-\bn}$ by $\prod_{i \in I} \prod_{a=1}^{n_i} \tau_i(z_{ia})$ either has no effect on the pole of $F$ at $\bx$ (in the case of option \eqref{eqn:black}) or it annihilates the pole completely (in the case of option \eqref{eqn:white}). If $\btau$ is $\bpsi$-monochrome, we define the truncated $q$-character of $\oL(\bpsi)$ as
\begin{equation}
\label{eqn:truncated q-character finite}
\chi^\btau_q(\oL(\bpsi)) =  [\bpsi] \sum_{\bn \in \nn} \mathop{\sum_{\bx \in (\BC^*)^{\bn}}}_{(\bpsi,\btau)\text{-black}} \omu^\bpsi_{\bx} \left[\left( \prod_{i \in I} \prod_{a=1}^{n_i} \frac {z - x_{ia}q^{d_{ij}}}{zq^{d_{ij}}-x_{ia}}\right)_{j \in I} \right] 
\end{equation}
In other words, we remove all the $(\bpsi,\btau)$-white $\bx$'s from \eqref{eqn:q-character general variant}. Our goal is to prove
\begin{equation}
\label{eqn:conjecture}
\chi_q(\oL(\bpsi \btau)) = \chi^{\btau}_q(\oL(\bpsi)) \cdot [\btau] \ochi^{\ord \bpsi \btau}
\end{equation}
under the hypothesis that $\btau$ is $\bpsi$-monochrome.

\medskip

\begin{example}
\label{ex:7}

Let us consider $\fg = \fsl_2$, $\psi(z) = \frac {zq^k - q^{-k}}{z-1}$, $\tau(z) = 1 - \frac {q^{-2k'}}z$ for $0 \leq k' < k$. Because of the description of $\fm_{x_1,\dots,x_n}^\psi$ in \eqref{eqn:formula ex 4}, we have
\begin{equation}
\label{eqn:formula ex 7 bis}
(x_1,\dots,x_n) \text{ is } \begin{cases} (\psi,\tau)\text{-black if } q^{-2k'} \notin \{x_1,\dots,x_n\} \\ (\psi,\tau)\text{-white if } q^{-2k'} \in \{x_1,\dots,x_n\} \end{cases}
\end{equation}
Thus, $\tau$ is $\psi$-monochrome. Since the non-trivial summands in $\chi_q(\oL(\psi))$ correspond to $(x_1,\dots,x_n) = (1,\dots,q^{-2(n-1)})$ as per \eqref{eqn:formula ex 6 bis}, we have
\begin{equation}
\label{eqn:formula ex 7 bis}
\chi_q^{\tau}(\oL(\psi)) = [\psi] \sum_{n = 0}^{k'} A_1^{-1} A_{q^{-2}}^{-1} \dots A_{q^{-2(n-1)}}^{-1}
\end{equation}

\end{example}

\medskip

\begin{proof} \emph{of Corollary \ref{cor:main}:} Consider any $\bx = (\by,\b0_{\bn-\bm}) \in \BC^{\bn}$ with $\by \in (\BC^*)^{\bm}$. By \eqref{eqn:factor}, we have
\begin{equation}
\label{eqn:factor proof}
\omu_{\bx}^{\bpsi \btau} = \omu_{\by}^{\bpsi \btau} \onu_{\bn-\bm}^{\ord \bpsi\btau}
\end{equation}
However, the assumption that $\btau$ is $\bpsi$-monochrome implies that
\begin{equation}
\label{eqn:cases one}
\CS_{<0|-\bm} \cap \fm_{\by}^{\bpsi \btau} = \begin{cases} \CS_{<0|-\bm} \cap \fm_{\by}^{\bpsi} &\text{if }\by \text{ is }(\bpsi,\btau)\text{-black} \\ \CS_{<0|-\bm} &\text{if }\by \text{ is }(\bpsi,\btau)\text{-white} \end{cases}
\end{equation}
(the first option is because multiplication by a Laurent polynomial which does not vanish at $\by$ has no bearing on the pole of a rational function at $\by$, and the second option is because the residue 
$$
\underset{z_m=y_m}{\text{Res}} \dots \underset{z_1=y_1}{\text{Res}}  \frac {\text{any element of }\CS_{-\bm}}{\prod_{1\leq a < b \leq m} \zeta_{i_bi_a} \left(\frac {z_b}{z_a} \right)} \prod_{a=1}^m \frac {\psi_{i_a}(z_a)\tau_{i_a}(z_a)}{z_a}
$$
is automatically 0 if condition \eqref{eqn:white} holds). Taking the codimensions of the left and right-hand sides of \eqref{eqn:cases one} in $\CS_{<0|-\bm}$ implies that (using the notation in \eqref{eqn:multiplicity mu})
\begin{equation}
\label{eqn:cases two}
\omu_{\by}^{\bpsi \btau} = \begin{cases} \omu_{\by}^{\bpsi} &\text{if }\by \text{ is }(\bpsi,\btau)\text{-black} \\ 0 &\text{if }\by \text{ is }(\bpsi,\btau)\text{-white} \end{cases}
\end{equation}
Plugging this formula into the right-hand side of \eqref{eqn:factor proof} and summing over all $\bx = (\by ,\b0_{\bn - \bm}) \in \BC^{\bn}$ gives us \eqref{eqn:conjecture}, as required.

\end{proof}

\subsection{An important consequence}
\label{sub:conjecture}

We will now show that as a special case, Corollary \ref{cor:main} implies \cite[Conjecture 7.15]{HL Borel}. To see this, let us recall the setup of the aforementioned conjecture: we let $\fg$ be of finite type, fix $R \in \BZ$ and pick
\begin{equation}
\label{eqn:hl psi}
\bpsi = \left( \psi_i(z) =  \prod_{r = -\infty}^{R+d_i} \left( \frac {z q^{d_i} - q^{r-d_i}}{z - q^r} \right)^{t_{i,r}} \right)_{ i \in I}
\end{equation}
where $d_i = \frac {d_{ii}}2$ and $t_{i,r} \geq 0$ are certain integers, almost all of which are 0. In particular, $\bpsi$ is regular (it actually corresponds to a finite-dimensional representation, see \eqref{eqn:finite dimensional psi})  and so its $q$-character \eqref{eqn:intro q-character simple} only involves $\bx = (x_{ia})_{i \in I, a \in \{1,\dots,n_i\}} \in (\BC^*)^{\bn}$. Moreover, as explained in the criterion in the last paragraph of Subsection \ref{sub:q-char intro}, we further have $x_{ia} \in q^{\BZ}, \forall i,a$. This allowed \loccit to define the truncated $q$-character
\begin{equation}
\label{eqn:truncated q-character}
\chi^R_q(L(\bpsi)) = [\bpsi] \sum_{\bn \in \nn} \sum_{\by = (y_{ia} \in q^{\leq R-d_i})_{i \in I, a \leq n_i}} \mu_{\by}^{\bpsi} \left[\left( \prod_{i \in I} \prod_{a=1}^{n_i} \frac {z - y_{ia}q^{d_{ij}}}{zq^{d_{ij}}-y_{ia}} \right)_{j \in I} \right] 
\end{equation}
by retaining only those summands in \eqref{eqn:intro q-character simple} whose coordinates satisfy the condition underneath the summation sign. Consider any polynomial $\ell$-weight $\btau$ such that
\begin{align}
&\tau_i(z) \text{ is divisible by } \prod_{r = R-d_i+1}^{R+d_i} \left(1 - \frac {q^r}z \right)^{u^{\by}_{i,r}}, \ \forall i \in I, \forall \by \text{ as in \eqref{eqn:truncated q-character}} \label{eqn:tau particular 1} \\
&\tau_i(z) \text{ is not divisible by } \left(1 - \frac {q^r}z \right), \ \forall i \in I, r \leq R-d_i \label{eqn:tau particular 2}
\end{align}
where the collection of non-negative integers $\{u^{\by}_{i,r}\}_{i \in I, r \in (R-d_i,R+d_i]}$ is defined for every summand $\by$ in \eqref{eqn:truncated q-character} which has non-zero multiplicity, as follows
\begin{multline}
\label{eqn:exponent u}
u^{\by}_{i,r} = t_{i,r}  - \left| \Big\{ b \in \{1,\dots,n_i\} \Big| y_{ib} = q^{r-2d_i} \Big\} \right| \\ + \sum_{I \ni j \neq i}  \left| \Big\{ a \in \{1,\dots,n_j\} \Big| y_{ja} = q^{r+d_{ij}} \Big\} \right| 
\end{multline}
Thus, the polynomial $\ell$-weight $\btau$ must satisfy one condition \eqref{eqn:tau particular 1} for every non-trivial summand in \eqref{eqn:truncated q-character}. In particular, for the summand corresponding to $\bn = \b0$ we have $u_{i,r} = t_{i,r}$ for all $i \in I$ and $r \in (R-d_i,R+d_i]$. Under these circumstances, Conjecture 7.15 of \cite{HL Borel} (in the formulation of Remark 7.16 of \emph{loc. cit.}) states that
\begin{multline}
\label{eqn:hl conjecture}
\chi_q(L(\bpsi \btau)) = \chi_q^R(L(\bpsi)) \chi_q(L(\btau)) \\ \stackrel{\eqref{eqn:intro q-character polynomial}}= \chi_q^R(L(\bpsi)) [\btau] \chi^{\ord \btau} \stackrel{\eqref{eqn:hl psi}}= \chi_q^R(L(\bpsi)) [\btau] \chi^{\ord \bpsi \btau}
\end{multline}
(note that \loccit state the above conjecture for a specific $\btau$ that depends on $R$, called $\boldsymbol{\Psi}_R$ therein, but the proof below holds for any $\btau$ satisfying \eqref{eqn:tau particular 1}-\eqref{eqn:tau particular 2}). However, the formula above is precisely the same as \eqref{eqn:intro conjecture} (recall that for $\fg$ of finite type, we have $L(\bpsi) = \oL(\bpsi)$, $\forall \bpsi$) once we prove the following.

\medskip

\begin{claim}
\label{claim:conj}

Under the above circumstances, we have $\forall \bx = (x_{ia} \in q^{\BZ})_{i \in I, a \leq n_i}$
$$
\bx \text{ is }(\bpsi,\btau)\text{-black} \quad \Leftrightarrow \quad x_{ia} \in q^{\BZ_{\leq R-d_i}}, \forall i,a
$$
and $\bx$ is $(\bpsi,\btau)$-white otherwise (thus implying that $\btau$ is $\bpsi$-monochrome).

\end{claim}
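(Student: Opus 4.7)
The plan is to verify both implications of the claim directly from the definitions of black and white in Subsection \ref{sub:last corollary}, leveraging the explicit form of $\bpsi$ in \eqref{eqn:hl psi} and the conditions \eqref{eqn:tau particular 1}--\eqref{eqn:tau particular 2} on $\btau$. For the black direction, suppose $x_{ia} = q^{s_{ia}}$ with $s_{ia} \le R - d_i$ for all $i,a$. By \eqref{eqn:tau particular 2}, no factor $(1-q^r/z)$ with $r \le R - d_i$ appears in $\tau_i(z)$; since the $\boldsymbol{\Psi}_R$-type $\btau$ under consideration has all roots on $q^{\BZ}$, this gives $\tau_i(x_{ia}) \neq 0$ for every $i,a$, so $\prod_{i,a} \tau_i(x_{ia}) \neq 0$ and $\bx$ is $(\bpsi,\btau)$-black.

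For the white direction, suppose some coordinate $x_{ia} = q^s$ satisfies $s > R - d_i$. I would show that for every $G \in \CS_{-\bn}$, every ordering $i_1,\dots,i_n$ of $\bn$ compatible with $\bx$, and every test polynomial $G'$, the iterated residue of
$$\frac{G(\bz) G'(\bz) \prod_b \tau_{i_b}(z_b)}{\prod_{a<b} \zeta_{i_b i_a}(z_b/z_a)} \prod_a \frac{\psi_{i_a}(z_a)}{z_a}$$
at $\bz = \bx$ vanishes. If $s > R + d_i$, the exponents $t_{i,r}$ in \eqref{eqn:hl psi} are zero for $r = s$, so $\psi_i$ contributes no pole at $q^s$, and a direct induction taking residues from the outermost variable inward shows the residue is $0$ regardless of $\tau$. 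If $R - d_i < s \le R + d_i$, condition \eqref{eqn:tau particular 1} combined with \eqref{eqn:exponent u} forces $\tau_i(z_a)$ to vanish at $z_a = q^s$ to order equal to the pole order of the integrand there; the three terms of \eqref{eqn:exponent u} track the pole of $\psi_i(z_a)$ of order $t_{i,s}$, the zero of its numerator at $z_a = q^{s - 2 d_i}$ contributed when another variable of color $i$ specializes to $q^{s-2d_i}$, and the poles of the $\zeta_{ji}$ denominators at $z_a/z_{a'} = q^{-d_{ij}}$ (see \eqref{eqn:zeta}) from every variable of color $j \neq i$ specialized to $q^{s + d_{ij}}$. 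Matching orders kills the residue, so $\bx$ is $(\bpsi,\btau)$-white.

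\textbf{The hard part} will be the combinatorial matching in the second subcase: one must enumerate every pole contribution to the shuffle integrand at $\bx$ and verify that it is absorbed by the vanishing order $u_{i,s}^{\bx}$ prescribed in \eqref{eqn:exponent u}. This is essentially a shuffle-residue reformulation of the Frenkel--Mukhin pole-cancellation analysis underlying the HL truncation machinery. A further subtlety is that \eqref{eqn:tau particular 1} only enforces divisibility for $\by$'s with $\omu_{\by}^{\bpsi} \neq 0$; for $\bx$ with $\omu_{\bx}^{\bpsi} = 0$, one would need to verify separately, using the iterative pole criterion at the end of Subsection \ref{sub:q-char intro}, that the pole of the integrand at such $\bx$ is already absorbed by the vanishing enforced at nearby $\by$'s with non-zero multiplicity.
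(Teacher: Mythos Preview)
Your black implication is correct and matches the paper. The white direction has the right overall strategy (match the order of vanishing of $\tau$ against the order of the pole of the integrand at the offending coordinate), but there are two genuine gaps.

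First, your subcase $s > R + d_i$ is not as easy as you claim. Even when $\psi_i$ has no pole at $q^s$, the denominator $\prod_{a<b} \zeta_{i_bi_a}(z_b/z_a)$ can still create a pole at $z_b = q^s$ whenever some earlier variable has been specialized to $x_a = q^{s+d_{i_ai_b}}$. So a ``direct induction from the outermost variable inward'' does not give zero; you are right back in the chain-and-order-counting situation of your second subcase. The paper avoids this case split entirely by instead fixing $b$ to be the \emph{minimal} index with $x_b = q^r$, $r > R - d_{i_b}$. Minimality forces all earlier coordinates into the allowed range $q^{\le R-d_{i_a}}$, so the chain prefix $\by = (x_{s_0},\dots,x_{s_{m-1}})$ is automatically a summand of the shape appearing in \eqref{eqn:truncated q-character}, and one can then invoke \eqref{eqn:tau particular 1} for that specific $\by$. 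This also largely addresses your ``further subtlety'' about $\omu_{\by}^{\bpsi}=0$: the chain construction is exactly the pole criterion at the end of Subsection \ref{sub:q-char intro}.

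Second, and more seriously, you are missing the wheel condition. In your order-matching, the three terms of \eqref{eqn:exponent u} do \emph{not} always produce enough vanishing of $\tau_{i_b}$: the middle term $-|\{b' : y_{ib'} = q^{r-2d_i}\}|$ \emph{decreases} $u_{i,r}^{\by}$ by one whenever a same-color variable in the chain sits at $q^{r-2d_i}$, so on naive count the $\zeta$-pole at $z_b=q^r$ is not fully cancelled by $\tau_{i_b}$. The paper's resolution is that precisely in this situation the variables $(x_{s_0},\dots,x_{s_{m-1}},x_b)$ contain a wheel in the sense of \eqref{eqn:wheel}, and every element of $\CS_{-\bn}$ vanishes at wheels. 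That extra zero from the shuffle element itself restores the balance and forces the residue \eqref{eqn:residue ultimate} to vanish. Without invoking the wheel conditions on $\CS^-$, your order-matching argument cannot close.
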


\medskip

\begin{proof} The implication $\Leftarrow$ is obvious, because if all coordinates of some $\bx$ satisfy $x_{ia} \in q^{\BZ_{\leq R-d_i}}$, then $\tau_i(x_{ia}) \neq 0$ due to \eqref{eqn:tau particular 2}. For the implication $\Rightarrow$, we will prove the contrapositive: assume that $\bx$ is such that 
$x_{ia} \in q^{\BZ_{> R-d_i}}$ for some $i \in I$ and some $a \in \{1,\dots,n_i\}$. We wish to prove that in this case, the residue
\begin{equation}
\label{eqn:residue ultimate} 
\underset{z_n=x_n}{\text{Res}} \dots \underset{z_1=x_1}{\text{Res}}  \frac {\text{any element of }\CS_{-\bn}}{\prod_{1\leq a < b \leq n} \zeta_{i_bi_a} \left(\frac {z_b}{z_a} \right)} \prod_{a=1}^n \frac {\psi_{i_a}(z_a) \tau_{i_a}(z_a)}{z_a} 
\end{equation}
is 0 (with respect to any given orderings $i_1,\dots,i_n$ of $\bn$ and $x_1,\dots,x_n$ of $\bx$), because that would imply that $\bx$ is $(\bpsi,\btau)$-white. Our assumption is that there exists $b \in \{1,\dots,n\}$ such that
\begin{equation}
\label{eqn:assumption}
x_b = q^r \text{ with }r > R-d_{i_b}
\end{equation}
and let us take $b$ minimal with this property. Recall that the residue \eqref{eqn:residue ultimate} can be non-zero only if $x_b$ is either equal to a pole of $\psi_{i_b} (z) \tau_{i_b} (z)$ or equal to $x_a q^{-d_{i_ai_b}}$ for some $a < b$. The first option cannot hold because of assumption \eqref{eqn:tau particular 1} with $\by \in (\BC^*)^{\b0}$, in which case we already mentioned that $u_{i,r}^{\by} = t_{i,r}$ and thus $\tau_{i_b}(z)$ cancels out the pole at $z = q^r$ of $\psi_{i_b}(z)$. As for the second option, it implies the existence of a collection of indices
\begin{equation}
\label{eqn:collection of indices}
a = s_0 < s_1 < \dots < s_{m-1} < s_m = b
\end{equation}
such that $x_a$ is a pole of $\psi_{i_a} (z) \tau_{i_a} (z)$ and 
\begin{equation}
\label{eqn:k<m}
x_{s_k} = x_{s_{k-1}} q^{-d_{i_{s_{k-1}}i_{s_k}}}, \quad \forall k \in \{1,\dots,m\}
\end{equation}
We will assume that the collection of indices \eqref{eqn:collection of indices} is minimal with the properties above, and let $\by = (x_a,x_{s_1},\dots,s_{m-1})$. We will now estimate the order of vanishing of the rational function in \eqref{eqn:residue ultimate} at $z_b = x_b$: for every $k$ satisfying \eqref{eqn:k<m}, we have a contribution of $-1$ from the $\zeta$ factors in the denominators of \eqref{eqn:residue ultimate}, which precisely balances out a contribution of $+1$ from the term on the second line of \eqref{eqn:exponent u}. However, the latter contribution is decreased by 1 if there exists some $l < m$ such that $i_{s_l} = i_b$ and $x_{s_l} = x_b q^{-2d_i}$ (such an $l$ must be unique due to the minimality of \eqref{eqn:collection of indices}). However, in this case a subset of the variables $(x_a,x_{s_1},\dots,s_{m-1})$ determines a wheel together with $x_b = q^r$ (in the terminology of \cite{FO}, see \cite{N Arbitrary}). Since elements of $\CS_{-\bn}$ vanish at wheels (see \eqref{eqn:wheel} for an example of this phenomenon), this implies that there is no actual pole at $z_b = x_b$ and thus the residue \eqref{eqn:residue ultimate} is 0.

\end{proof}

\medskip 

\section{Quantum toroidal $\fgl_1$}
\label{sec:toroidal}

\medskip

\subsection{A related setting}
\label{sub:toroidal}

We will now redevelop the theory in the preceding Subsections in the setting where quantum loop algebras are replaced by quantum toroidal $\fgl_1$. Although strictly speaking ill-defined, the latter algebra morally corresponds to the ``set of simple roots" $I = \{i\}$ and ``Cartan matrix" given by the $1 \times 1$ matrix $(0)$. Moreover, quantum toroidal $\fgl_1$ has two parameters $q_1,q_2$ instead of the single parameter $q$. Thus, in what follows, we will assume $q_1,q_2$ are non-zero complex numbers which do not satisfy $q_1^aq_2^b = 1$ for any $(a,b) \in \BZ^2 \backslash (0,0)$. For the remainder of the present paper, we will replace the zeta functions \eqref{eqn:zeta} by
\begin{equation}
\label{eqn:zeta toroidal}
  \zeta (x) = \frac {(xq_1 - 1)(x q_2 - 1)}{(x - 1)(xq_1q_2 - 1)}
\end{equation}
Compare the following with Definitions \ref{def:pre quantum loop} and \ref{def:quantum loop} (note that we will not include the index $i$ in the notation of the series \eqref{eqn:formal series}, as $I$ is a one-element set). It is often called the Ding-Iohara-Miki algebra, see \cite{DI, M}.

\medskip

\begin{definition}
\label{def:toroidal}

Quantum toroidal $\fgl_1$ (with one central element set equal to 1) is
$$
  \UUi = \BC \Big \langle e_{d}, f_{d}, \ph_{d'}^\pm \Big \rangle_{d \in \BZ, d' \geq 0} \Big/
  \text{relations \eqref{eqn:rel 0 tor}-\eqref{eqn:rel 3 tor}}
$$
where we impose the following relations for all $\pm,\pm' \in \{+,-\}$:
\begin{equation}
\label{eqn:rel 0 tor}
[[e_{d-1},e_{d+1}],e_d] = 0, \quad \forall d \in \BZ
\end{equation}
\begin{equation}
\label{eqn:rel 1 tor}
  e(x) e(y) \zeta \left( \frac yx \right) =\, e(y) e(x) \zeta \left(\frac xy \right)
\end{equation}
\begin{equation}
\label{eqn:rel 2 tor}
  \ph^\pm(y) e(x) \zeta \left(\frac xy \right) = e(x) \ph^\pm(y) \zeta \left( \frac yx \right)
\end{equation}
\begin{equation}
\label{eqn:rel 3 tor}
  \ph^{\pm}(x) \ph^{\pm'}(y) = \ph^{\pm'}(y) \ph^{\pm}(x), \quad
  \ph_{0}^+ \ph_{0}^- = 1
\end{equation}
as well as the opposite relations with $e$'s replaced by $f$'s, and finally the relation
\begin{equation}
\label{eqn:rel 4 tor}
  \left[ e(x), f(y) \right] =
  \frac {\delta \left(\frac xy \right) \Big( \ph^+(x) - \ph^-(y) \Big)}{(q_1-1)(q_2-1)(q_1^{-1}q_2^{-1}-1)}  
\end{equation}

\end{definition}

\medskip

\noindent The algebra $\UUi$ is graded by $\BZ \times \BZ$, with
\begin{equation}
\deg e_{d} = (1,d), \qquad \deg f_{d} = (-1,d), \qquad \deg \ph^\pm_{d'} = (0,\pm d')
\end{equation}
Let $\UUip, \UUim, \UUig, \UUil$ be the subalgebras of $\UUi$ generated by $\{e_d\}_{d\in \BZ}$, $\{f_d\}_{d\in \BZ}$, $\{e_d,\ph_{d'}^+\}_{d\in \BZ, d' \geq 0}$,  $\{f_d,\ph_{d'}^-\}_{d\in \BZ,d' \geq 0}$, respectively. The latter two subalgebras are topological Hopf algebras, using the formulas of Subsection \ref{sub:hopf pre-quantum}. With this in mind, quantum toroidal $\fgl_1$ is a Drinfeld double
\begin{equation}
\label{eqn:toroidal drinfeld double}
\UUi = \UUig \otimes \UUil
\end{equation}
with respect to the natural analogue of the Hopf pairing \eqref{eqn:pairing pre-quantum}
\begin{equation}
\label{eqn:toroidal pairing}
\UUig \otimes \UUil \xrightarrow{\langle \cdot, \cdot \rangle} \BC
\end{equation}
(in order to ensure that relation \eqref{eqn:rel 4 tor} holds, we must renormalize the pairing \eqref{eqn:pairing pre-quantum} so that the denominator of \eqref{eqn:pairing ef} is replaced by minus the denominator of \eqref{eqn:rel 4 tor}).

\medskip

\begin{remark}

There also exists an algebra called quantum toroidal $\fgl_n$ (\cite{GKV}), which is a two-parameter version of the quantum loop algebra of type $\widehat{A}_{n-1}$, endowed with an extra series of Cartan elements. All the constructions in the present Section generalize to quantum toroidal $\fgl_n$, see \cite{N Toroidal} for the respective shuffle algebra.

\end{remark}

\medskip

\subsection{The shuffle algebra}
\label{sub:shuffle gl1}

The quantum toroidal $\fgl_1$ version of the shuffle algebra was studied in \cite{FHHSY, N Shuffle}, to which we refer for proofs of all the results summarized in the present Subsection. The big shuffle algebra
\begin{equation}
\label{eqn:big shuffle toroidal}
\CV = \bigoplus_{n \in \BN} \CV_{n}, \quad \text{where} \quad \CV_{n} = \frac {\BC[z_{1}^{\pm 1},\dots,z_{n}^{\pm 1}]^{\text{sym}}}{\prod_{1 \leq a \neq b \leq n} (z_a q_1 q_2 - z_b)}
\end{equation}
is endowed with the shuffle product
\begin{multline}
\label{eqn:mult toroidal}
E( z_{1}, \dots, z_{n}) * E'(z_{1}, \dots,z_{n'}) = \frac 1{n!n'!} \cdot \\ \textrm{Sym} \left[ E(z_{1}, \dots, z_{n}) E'(z_{n+1}, \dots, z_{n+n'})  \prod_{1 \leq a \leq n < b \leq n+n'} \zeta \left( \frac {z_{a}}{z_{b}} \right) \right]
\end{multline}
Then one defines the (small) shuffle algebra
\begin{equation}
\label{eqn:e toroidal}
\CS^+ = \left\{  \frac {\rho(z_{1},\dots,z_{n})}{\prod_{1 \leq a \neq b \leq n} (z_a q_1 q_2 - z_b)} \right\}
\end{equation}
where $\rho$ goes over the set of symmetric Laurent polynomials that satisfy the so-called wheel conditions (\cite{FHHSY}):
\begin{equation}
\label{eqn:wheel toroidal}
\rho(w, wq_1,wq_1q_2,z_4,\dots,z_n) = \rho(w, wq_2,wq_1q_2,z_4,\dots,z_n) = 0
\end{equation}
It was shown in \cite{N Shuffle} that
\begin{equation}
\label{eqn:upsilon plus toroidal}
\Upsilon^+ : \UUip \stackrel{\sim}{\rightarrow} \CS^+, \qquad e_d \mapsto z_1^d \in \CV_1, \quad \forall d \in \BZ
\end{equation}
is an isomorphism. Set $\CS^- = \CS^{+,\text{op}}$, and define the double shuffle algebra
\begin{equation}
\label{eqn:double shuffle toroidal}
\CS = \CS^+ \otimes \frac {\BC[\ph_d^\pm]_{d \geq 0}}{\ph_0^+ \ph_0^- - 1} \otimes \CS^-
\end{equation} 
by the natural analogues of relations \eqref{eqn:shuffle plus commute}, \eqref{eqn:shuffle minus commute}, \eqref{eqn:shuffle plus minus commute}. In particular, if we write
\begin{equation}
\label{eqn:k and p toroidal}
\ph^\pm(y) = \kappa^{\pm 1} \exp \left(\sum_{u=1}^\infty \frac {p_{\pm u}}{uy^{\pm u}} \right)
\end{equation} 
then $\kappa$ is central, while for any $X \in \CS_{\pm n}$ we have
\begin{equation}
\label{eqn:p shuffle toroidal}
[p_u, X] = \pm X (z_1^u+\dots+z_n^u)(q_1^u-1)(q_2^u-1)(q_1^{-u} q_2^{-u}-1)
\end{equation}
With this in mind, the isomorphism \eqref{eqn:upsilon plus toroidal}, together with its opposite when $+$ is replaced by $-$, combine into an isomorphism
\begin{equation}
\label{eqn:upsilon toroidal}
\Upsilon : \UUi  \stackrel{\sim}{\rightarrow} \CS
\end{equation}
of $\BZ \times \BZ$ graded algebras, where we grade $\CS$ by
\begin{equation}
\label{eqn:grading shuffle toroidal}
\deg X = (\pm n, d)
\end{equation}
for any element $X(z_1,\dots,z_n) \in \CS^\pm$ of homogeneous degree $d$. We will call horizontal degree (denoted by ``hdeg") and vertical degree (denoted by ``vdeg") the two components of the grading \eqref{eqn:grading shuffle toroidal}, and denote the graded summands by
\begin{equation}
	\label{eqn:grading shuffle toroidal summands}
\CS^\pm = \bigoplus_{n \in \BN} \CS_{\pm n} = \bigoplus_{n \in \BN} \bigoplus_{d \in \BZ} \CS_{\pm n,d}
\end{equation}

\medskip

\subsection{Slope subalgebras}
\label{sub:slope toroidal}

Slope subalgebras for the shuffle algebra \eqref{eqn:e toroidal} predated (and served as inspiration for) those of Subsection \ref{sub:slope}. They were introduced in the context at hand in \cite{N Shuffle}, in order to give a shuffle algebra incarnation of the isomorphism (\cite{S}) between $\UUi$ and the elliptic Hall algebra of \cite{BS}. We will not repeat the definition of the subalgebras
\begin{equation}
\label{eqn:subalgebras toroidal}
\Big\{ \CS_{\geq \mu}^\pm, \ \CS_{\leq \mu}^\pm \text{ and } \CB_\mu^\pm \Big\}_{\mu \in \BQ}
\end{equation}
(nor the versions where $\geq,\leq$ are replaced by $>,<$) as they are word for word adaptations of the analogous notions in \eqref{eqn:shuffle slope geq}, \eqref{eqn:shuffle slope leq}, \eqref{eqn:slope subalgebra} for $\br = (1)$. In light of the isomorphism between $\CS$ and the elliptic Hall algebra, we have 
\begin{equation}
\label{eqn:slope toroidal}
\CB^\pm_{\frac dn} \cong \BC[p_{\pm n,\pm d}, p_{\pm 2n, \pm 2d}, p_{\pm 3n, \pm 3d}, \dots]
\end{equation}
(\cite[Theorem 4.3]{N Shuffle}) for any coprime integers $(n,d) \in \BZ_{>0} \times \BZ$, with $\deg p_{\pm nk, \pm dk} = (\pm nk, \pm dk)$ for all $k \geq 1$. The subalgebras $\CS_{\geq \mu}^\pm$ and $\CS_{\leq \mu}^\pm$ may be reconstructed from the slope subalgebras \eqref{eqn:slope toroidal} by the analogues of the factorizations \eqref{eqn:factorization 3}-\eqref{eqn:factorization 4}
\begin{align}
\bigotimes^{ \rightarrow}_{\mu \in [\nu, \infty)} \CB^\pm_{\mu} \xrightarrow{\sim} \ &\CS^\pm_{\geq \nu}  \xleftarrow{\sim} \bigotimes^{ \leftarrow}_{\mu \in [\nu, \infty)} \CB^\pm_{\mu} \label{eqn:factorization toroidal 1} \\ 
\bigotimes^{\rightarrow}_{\mu \in (-\infty,\nu]} \CB_\mu^\pm \xrightarrow{\sim} \ &\CS^\pm_{\leq \nu} \xleftarrow{\sim} \bigotimes^{\leftarrow}_{\mu \in (-\infty,\nu]} \CB_\mu^\pm \label{eqn:factorization toroidal 2} 
\end{align}
(as well as the analogous formulas with $\geq,\leq$ replaced by $>,<$ and the half-open intervals replaced by open intervals). By analogy with \eqref{eqn:factorization 1}, the algebras $\CS^\pm$ also have factorizations as above, but with $\mu$ going over $\BQ$. Thus, \eqref{eqn:slope toroidal} implies that
\begin{equation}
\label{eqn:pbw}
\CS^\pm = \bigoplus_{\text{convex path }v} \BC \cdot p_v^\pm
\end{equation}
Let us explain the notation in the right-hand side of the formula above. A sequence $v = \{(n_1,d_1), \dots, (n_k,d_k)\}$ of vectors in $\BZ_{>0} \times \BZ$ is called a \textbf{convex path} if
\begin{equation}
	\label{eqn:convex path}
\frac {d_1}{n_1} \leq \dots \leq \frac {d_k}{n_k} \quad \text{and} \quad n_a \leq n_{a+1} \text{ if } \frac {d_a}{n_a} = \frac {d_{a+1}}{n_{a+1}}
\end{equation}
and we write in \eqref{eqn:pbw}
\begin{equation}
	\label{eqn:pv}
p_{v}^\pm = p_{\pm n_1, \pm d_1} \dots p_{\pm n_k, \pm d_k}
\end{equation}
The elements \eqref{eqn:pv} are orthogonal, with pairing given by (\cite[Proposition 5.4]{N Shuffle})
\begin{equation}
	\label{eqn:pairing toroidal}
	\Big \langle p_v^+,p_{v'}^- \Big \rangle = \delta_{vv'} \frac {\prod_{(n,d) \in \BZ_{>0} \times \BZ} \# \{a\text{ s.t. } (n_a,d_a) = (n,d)\}!}{\prod_{a=1}^k (1-q_1^{g_a})(1-q_2^{g_a})(1-q_1^{-g_a} q_2^{-g_a})}
\end{equation}
where $g_a = \gcd(n_a,d_a)$ for all $a \in \{1,\dots,n\}$. The algebras $\CS^\pm$ have bases analogous to \eqref{eqn:pbw} indexed by concave paths, i.e. flipping the inequalities in \eqref{eqn:convex path}. Alternatively, we can retain the convention that $v = \{(n_1,d_1), \dots, (n_k,d_k)\}$ denotes a convex path, but consider the basis of elements
\begin{equation}
	\label{eqn:reversed path}
p_{\text{rev}(v)}^\pm = p_{\pm n_k, \pm d_k} \dots p_{\pm n_1,\pm d_1}
\end{equation}
of $\CS^\pm$ instead of \eqref{eqn:pv}. However, as opposed from the $p_v^\pm$'s, the $p_{\text{rev}(v)}^\pm$'s are not orthogonal with respect to the pairing.

\medskip

\noindent Using \eqref{eqn:factorization toroidal 1}-\eqref{eqn:factorization toroidal 2}, we may construct the following analogues of \eqref{eqn:a geq} and \eqref{eqn:a leq}
\begin{align}
&\CA^{\geq}_\nu = \CS^-_{<\nu}  \otimes \CB_\infty^+ \otimes \CS^+_{\geq \nu}=  \bigotimes^{\leftarrow}_{\mu \in (-\infty,\nu)} \CB_\mu^- \otimes \bigotimes^{\leftarrow}_{\mu \in [\nu,\infty]} \CB_\mu^+ \label{eqn:eqref 1} \\
&\CA^{\leq}_\nu = \CS^+_{<\nu}  \otimes \CB_\infty^- \otimes \CS^-_{\geq \nu}= \bigotimes^{\leftarrow}_{\mu \in (-\infty,\nu)} \CB_\mu^+ \otimes \bigotimes^{\leftarrow}_{\mu \in [\nu,\infty]} \CB_\mu^- \label{eqn:eqref 2}
\end{align}
which are subalgebras of $\CS$ by the natural analogue of Proposition \ref{prop:a are algebras}. The analogue of Proposition \ref{prop:a triangular} also holds, in that multiplication induces isomorphisms 
\begin{equation}
\label{eqn:triangular toroidal}
\CA^{\geq}_\nu \otimes \CA^{\leq}_\nu \xrightarrow{\sim} \CS \xleftarrow{\sim} \CA^{\leq}_\nu \otimes \CA^{\geq}_\nu
\end{equation}
for all $\nu \in \BQ$. It is known that the subalgebras $\CA^{\geq}_{\nu}$ and $\CA^{\leq}_{\nu}$ are actually topological Hopf algebras, and \eqref{eqn:triangular toroidal} is a Drinfeld double type decomposition. 

\medskip

\subsection{Category $\CO$ for quantum toroidal $\fgl_1$} 
\label{sub:category O toroidal}

The analogue of category $\CO$ for quantum toroidal $\fgl_1$ was studied in \cite{FJMM} in connection with the Bethe ansatz in the theory of integrable systems. In a nutshell, one considers representations
\begin{equation}
\label{eqn:a geq toroidal}
\CA^{\geq} = \CA^{\geq}_0 \curvearrowright V
\end{equation}
which have abstract weight decompositions
\begin{equation}
	\label{eqn:weight toroidal}
V = \bigoplus_{n \in \BC} V_n
\end{equation}
where the $V_n$ are finite-dimensional, and non-zero only for $n$ in a finite number of translates of $-\BN$. The difference between the case at hand and that of quantum loop algebras in Definitions \ref{def:category o affine} and \ref{def:category o general} is that the weight grading above cannot be deduced from the Cartan subalgebra of $\CA^\geq$. Instead, one must either enlarge quantum toroidal $\fgl_1$ by introducing an extra element $D^\perp$ which keeps track of the horizontal degree (as was done in \cite{FJMM}), or equivalently, one must assume that the weight decompositions \eqref{eqn:weight toroidal} have the property that
\begin{equation}
\label{eqn:weight commute toroidal}
x \cdot V_n \subseteq V_{n + \hdeg x}, \quad \forall x \in \CA^\geq
\end{equation}
We assume \eqref{eqn:weight commute toroidal} in the present paper. By analogy with Subsections \ref{sub:representations} and \ref{sub:q-characters affine}, simple graded $\CA^\geq$ modules are in one-to-one correspondence with highest $\ell$-weights
$$
\bpsi = (\psi(z),m) \in \BC[[z^{-1}]]^* \times \BC
$$
where $m$ indicates the highest weight. The corresponding simple module $L(\bpsi)$ is in category $\CO$ if and only if $\psi(z)$ is the expansion of a rational function. The $q$-character of a representation in category $\CO$ is defined in \cite{FJMM} as
\begin{equation}
\label{eqn:q-character toroidal}
\chi_q(V) = \sum_{\bpsi \in \BC[[z^{-1}]]^* \times \BC} \dim_{\BC}( V_{\bpsi})[\bpsi]
\end{equation}
where $V_{(\psi(z),m)}$ is the generalized eigenspace of $V$ for the series $\ph^+(z) \in \CA^{\geq}[[z^{-1}]]$, corresponding to the eigenvalue $\psi(z)$, intersected with the weight subspace $V_m$. 

\medskip

\subsection{Simple modules}
\label{sub:simple toroidal}

For any $\ell$-weight $\bpsi = (\psi(z),m)$, we have a representation
\begin{equation}
\CA^\geq \curvearrowright W(\bpsi)
\end{equation}
generated by a vector $\vac \in V_m$ modulo the relations $\ph^+(z)\cdot \vac = \psi(z)\vac$ and $E \cdot \vac = 0$ for any $E \in \CS_{\geq 0|n}$ with $n > 0$. By analogy with Subsection \ref{sub:simple}, the simple module with highest $\ell$-weight $\bpsi$ arises as the quotient
\begin{equation}
L(\bpsi) = W(\bpsi) \Big/ J(\bpsi)\vac
\end{equation}
where $J(\bpsi) = \bigoplus_{n \in \BN} J(\bpsi)_n$ is defined as follows: let $J(\bpsi)_n$ be the set of $F \in \CS_{<0|-n} $ such that
\begin{equation}
\label{eqn:psi pairing toroidal}
\left \langle E(z_1,\dots,z_n) \prod_{a=1}^n \psi(z_a), S(F(z_1,\dots,z_n)) \right \rangle = 0, \quad \forall E \in \CS_{\geq 0|n}
\end{equation}
By analogy with \eqref{eqn:q-character general}, we therefore have the following formula for the $q$-character
\begin{equation}
\label{eqn:q-character general toroidal}
\chi_q(L(\bpsi)) = [\bpsi] \cdot
\end{equation}
$$
\cdot \sum_{n \in \BN} \sum_{\bx = (x_1,\dots,x_n)\in \BC^n/S_n}  \mu_{\bx}^{\bpsi} \left[ \left( \prod_{a=1}^n \frac {(z - x_a q_1)(z - x_a q_2)(z q_1 q_2 - x_a)}{(z q_1 - x_a)(z q_2 - x_a)(z - x_a q_1 q_2)} , -n \right)\right]
$$
where the multiplicities in the formula above are given by
\begin{equation}
\label{eqn:multiplicity toroidal}
\mu_{\bx}^\bpsi = \dim_{\BC} \left(\CS_{<0|-n} \Big / J(\bpsi)_{n} \right)_{\bx}
\end{equation}
The vector space in the right-hand side of \eqref{eqn:multiplicity toroidal} is the fiber at $\bx \in \BC^n/S_n$ of the $\BC[z_1,\dots,z_n]^{\text{sym}}$-module $\CS_{<0|-n} / J(\bpsi)_{n}$. In formula \eqref{eqn:q-character general toroidal} and henceforth, the product of symbols $[\bpsi]$ is defined to be multiplicative in $\psi(z)$ and additive in $m$.

\medskip

\subsection{Explicit computations}
\label{sub:explicit toroidal}

As an illustration of formulas \eqref{eqn:q-character general toroidal} and \eqref{eqn:multiplicity toroidal}, let us calculate the $q$-character corresponding to a polynomial $\ell$-weight, i.e.
\begin{equation}
\label{eqn:integral tau toroidal}
\btau = \Big( \tau(z) = a_0 + a_1z^{-1} + \dots + a_{r-1} z^{-r+1} + a_rz^{-r}, m\Big)
\end{equation}
for various complex numbers $a_0, a_1, \dots, a_r,m$ such that $a_0,a_r \neq 0$. Just like in Proposition \ref{prop:integral}, a shuffle element $F \in \CS_{<0|-n}$ lies in $J(\btau)_n$ if and only if
\begin{equation}
\label{eqn:condition equivalent 1}
\left \langle E(z_1,\dots,z_n) \prod_{a=1}^n z_a^{-r}, S(F(z_1,\dots,z_n)) \right \rangle = 0, \qquad \forall E \in \CS_{\geq 0|n}
\end{equation}
Since the map $E(z_1,\dots,z_n)\mapsto E(z_1,\dots,z_n) \prod_{a=1}^n z_a^{-r}$ yields an isomorphism $\CS_{\geq 0} \xrightarrow{\sim} \CS_{\geq -r}$ (by the analogue of Proposition \ref{prop:easy}), then \eqref{eqn:condition equivalent 1} is equivalent to
\begin{equation}
	\label{eqn:condition equivalent 2}
\Big \langle E(z_1,\dots,z_n), S(F(z_1,\dots,z_n)) \Big \rangle = 0, \qquad \forall E \in \CS_{\geq -r|n}
\end{equation}

\medskip

\begin{lemma}
\label{lem:concave}

The basis elements in \eqref{eqn:pv} and \eqref{eqn:reversed path} satisfy the equality
\begin{equation}
\label{eqn:antipode convex}
S(p_{\emph{rev}(v)}^-) \in (-1)^{l(v)} p_v^- \kappa^{|v|} + \sum_{v' \prec v} \BC \cdot p_{v'}^-
\end{equation}
where for any convex path $v = \{(n_1,d_1),\dots,(n_k,d_k)\}$, we write $l(v) = k$ and $|v| = n_1+\dots+n_k$, and $v' \preceq v$ means that the convex paths obtained by stringing together the vectors of $v'$ and $v$ have the same start and end points, but at every $x$-coordinate the former has $y$-coordinate less than or equal to the latter (we also write $v' \prec v$ if $v' \preceq v$ and $v' \neq v$). In the right-hand side of \eqref{eqn:antipode convex}, we allow convex paths to contain vectors of infinite slope, corresponding to elements of $\CB_\infty^-$.

\end{lemma}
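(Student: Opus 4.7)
The strategy is a double induction on the pair $(|v|, l(v))$ combined with the antihomomorphism property $S(ab)=S(b)S(a)$. Applied to $p^-_{\mathrm{rev}(v)}=p_{-n_k,-d_k}\cdots p_{-n_1,-d_1}$, this gives
\[
S(p^-_{\mathrm{rev}(v)}) \;=\; S(p_{-n_1,-d_1})\cdots S(p_{-n_k,-d_k}),
\]
reducing everything to the single-vector base case
\[
S(p_{-n,-d}) \;\in\; -p_{-n,-d}\,\kappa^n \;+\; \bigoplus_{v'\prec\{(n,d)\}}\BC\cdot p^-_{v'},
\]
which I prove by sub-induction on $n$. For $n=1$, $p_{-1,-d}$ is a scalar multiple of $f_{-d}$; combining $S(f(z))=-f(z)(\ph^-(z))^{-1}$ with the exponential expansion $(\ph^-(z))^{-1}=\kappa\exp\bigl(-\sum_{u\geq 1}p_{-u}z^u/u\bigr)$ and extracting the coefficient of $z^{d}$ yields $-p_{-1,-d}\kappa$ plus a sum of terms $(\mathrm{scalar})\cdot p_{-1,-E}\cdot p_{-u_1}\cdots p_{-u_r}\cdot\kappa^{\bullet}$ with $u_1,\ldots,u_r\geq 1$ and $E+\sum u_j=d$. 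Arranged in convex order, each such term corresponds to a polygon path $(1,E),(0,u_1),\ldots,(0,u_r)$ from $(0,0)$ to $(1,d)$ which dips through $(1,E)$ with $E<d$ before rising along the infinite-slope segments, hence lies strictly below $\{(1,d)\}$ in the polygon order. For $n>1$, the counit identity $m(S\otimes\mathrm{id})\Delta(p_{-n,-d})=0$ together with $\Delta(p_{-n,-d})\in\CS^-_{\leq d/n}\otimes\CS^-_{\geq d/n}$ and its leading tensors $p_{-n,-d}\otimes\kappa^{-n}$, $1\otimes p_{-n,-d}$ yields
\[
S(p_{-n,-d}) \;=\; -p_{-n,-d}\kappa^n \;-\; \sum S(X_{(1)})\,X_{(2)}\,\kappa^n
\]
with each intermediate $X_{(1)}$ of absolute horizontal degree strictly less than $n$, so the induction hypothesis applies.

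With the base case secured, substituting $S(p_{-n_a,-d_a})=-p_{-n_a,-d_a}\kappa^{n_a}+r_a$ with $r_a$ in the corresponding strictly-below subspace and distributing, the all-leading product collapses via centrality of $\kappa$ (from \eqref{eqn:k and p toroidal}) and the fact that $v$ is already in convex order to
\[
(-1)^k\, p_{-n_1,-d_1}\kappa^{n_1}\cdots p_{-n_k,-d_k}\kappa^{n_k} \;=\; (-1)^k\, p^-_v\,\kappa^{|v|},
\]
which is exactly the desired leading term. Every remaining term in the expansion contains at least one $r_a$, so it remains to show that any such mixed product lies in $\bigoplus_{v'\prec v}\BC\cdot p^-_{v'}$.

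The main obstacle is precisely this last claim, a Newton-polygon monotonicity for ordered products in $\CS^-$: if each $x_a$ lies in $\sum_{w_a\preceq\{(n_a,d_a)\}}\BC\cdot p^-_{w_a}$ with at least one strict $w_a\prec\{(n_a,d_a)\}$, then $x_1\cdots x_k$ expands in the PBW basis as a combination of $p^-_{v'}$ with $v'\prec v$. I plan to deduce this by pairing against $p^+_{v''}$ for an arbitrary convex $v''$ of the correct degree and iteratively applying the Hopf compatibility \eqref{eqn:bialgebra 2} to rewrite the pairing as a sum of products of pairings between successive coproduct pieces of $p^+_{v''}$ and the individual factors; the slope-filtration properties of $\Delta$ on $\CS^+$ (the toroidal analogues of \eqref{eqn:slope 1}-\eqref{eqn:slope 4}) combined with the orthogonality \eqref{eqn:pairing toroidal} of the PBW bases force any nonzero contribution to come from $v''\preceq v$, while the presence of a strict $r_a$ rules out $v''=v$. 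A secondary technical point is verifying that the intermediate $X_{(1)}\otimes X_{(2)}$ appearing in the base-case induction for $n>1$ decompose compatibly through the PBW basis of $\CS^-_{\leq d/n}\otimes\CS^-_{\geq d/n}$, which I expect to follow from the slope factorizations \eqref{eqn:factorization toroidal 1}-\eqref{eqn:factorization toroidal 2} together with coassociativity of $\Delta$.
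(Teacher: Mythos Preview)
Your architecture (antihomomorphism of $S$, then a polygon-straightening for the cross terms) coincides with the paper's, but the paper organizes the induction differently. Its base case is not a single vector but any path whose vectors share a common slope $\mu$: there it notes that $\Delta|_{\CB^-_\mu}$ agrees with the Hall coproduct on the polynomial ring \eqref{eqn:slope toroidal} modulo $\CS^-_{<\mu}\otimes\CS^\leq_{>\mu}$, and since the Hall antipode sends each primitive $p_{-n,-d}$ to $-p_{-n,-d}\kappa^n$, all same-slope paths are settled at once, bypassing your sub-induction on $n$. The paper's induction step splits $v$ at a change of slope and outsources the straightening of cross terms to \cite[Lemma~5.6]{BS}, whereas your proposed pairing argument would make the proof self-contained at the cost of more bookkeeping; both are viable.

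One technical point in your sub-induction for $n>1$: it is not true that every intermediate $X_{(1)}$ in $\Delta(p_{-n,-d})$ has $|\text{hdeg}|<n$. The $\bm=\bn$ summand of \eqref{eqn:coproduct shuffle minus} contributes, beyond $p_{-n,-d}\otimes\kappa^{-n}$, further tensors $(p_{-n,-d}\cdot\rho)\otimes c$ with $\rho$ a nonconstant symmetric polynomial and $c\in\CB^-_\infty$, whose first factor still has horizontal degree $-n$; your hypothesis on $n$ does not cover $S(X_{(1)})$ for these. The paper avoids this by using $m(\text{Id}\otimes S)\Delta$ rather than $m(S\otimes\text{Id})\Delta$: then $S$ lands on the second factor, which for these terms sits in $\CB^-_\infty$ where the antipode is explicit, while every other second factor has $|\text{hdeg}|<n$ and is handled by the outer induction. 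Switching sides fixes your argument with no further change.
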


\medskip

\begin{proof} Formula \eqref{eqn:antipode convex} is a consequence of the properties of the antipode in the elliptic Hall algebra (\cite{BS}), but we will prove it by induction on $|v|$. The base case will be when all the constituent vectors of the convex path $v$ have the same slope $\mu$.  By analogy with \eqref{eqn:slope 2}-\eqref{eqn:slope 4}, we have
$$	
\Delta(p_{\text{rev}(v)}^-) \in \Delta_{\mu}(p_{\text{rev}(v)}^-) + \CS^-_{<\mu} \otimes \CS^{\leq}_{>\mu}
$$
where $\Delta_{\mu} : \CB_{\mu}^- \rightarrow \CB_{\mu}^- \otimes \CB_{\mu}^-[\kappa^{-1}]$ corresponds to the Hall coproduct on the ring of symmetric polynomials under the isomorphism \eqref{eqn:slope toroidal}. If we apply $\text{Id} \otimes S$ to the above equality and then multiply the tensor factors, we conclude that $S|_{\CB^-_\mu}$ matches the Hall antipode on the ring of symmetric polynomials, modulo elements of 
$$
\CS^-_{\leq \mu} \cdot \CS^{\leq}_{\geq \mu}
$$
which correspond to convex paths that go below the line of slope $\mu$. Since the Hall antipode is determined by $p_{-n,-d} \mapsto -p_{-n,-d}\kappa^n$ for all $(n,d) \in \BZ_{>0} \times \BZ$ of slope $\mu$, we conclude precisely \eqref{eqn:antipode convex}. 
	
\medskip
	
\noindent Let us now prove the induction step, for which we may assume that the convex path $v$ contains vectors of different slopes. We may then express $v$ as the concatenation of convex paths $v'$ and $v''$, where all the constituent vectors of $v'$ have strictly smaller slope that all the constituent vectors of $v''$. Therefore, we have
\begin{multline*}
S(p_{\text{rev}(v)}^-)  =  S(p_{\text{rev}(v')}^-) S(p_{\text{rev}(v'')}^-) = \\ = (-1)^{l(v')+l(v'')} p_{v'}^- p_{v''}^- \kappa^{|v'|+|v''|}+ \dots = (-1)^{l(v)} p_{v}^- \kappa^{|v|} + \dots
\end{multline*}
where the ellipsis denotes $p$'s corresponding to the concatenation of a convex path $\preceq v'$ with the concatenation of a convex path $\preceq v''$ (other than the concatenation of $v'$ with $v''$, which is just $v$). By the ``straightening" argument in \cite[Lemma 5.6]{BS}, these ellipsis terms may be expressed as linear combinations of $p_{\tilde{v}}^-$ for $\tilde{v} \prec v$.

\end{proof}

\medskip

\begin{proposition}
\label{prop:q-character integral toroidal}

The $q$-character of $L(\btau)$ for a polynomial $\ell$-weight \eqref{eqn:integral tau toroidal} is
\begin{equation}
\label{eqn:q-character integral toroidal}
\chi_q(L(\btau)) = [\btau] \prod_{n=1}^{\infty} \left( \frac 1{1-h^n} \right)^{rn}
\end{equation}
where $h = [(1,-1)]$ is the $\ell$-weight whose first component is the constant rational function 1, and whose second component reflects a grading shift of $-1$.

\end{proposition}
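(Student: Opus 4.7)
The plan is to reduce the problem to computing the graded character of a single vector space $L^r := \CS^-_{<0}/J^r$, and then extract the answer combinatorially from a slope-subalgebra factorization.

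\textbf{Step 1 (reduction to the horizontal grading).} The argument leading to \eqref{eqn:condition equivalent 1}--\eqref{eqn:condition equivalent 2} shows that $J(\btau)$ depends only on $r = \deg_{z^{-1}}\tau$: explicitly, $J(\btau) = J^r := \{F \in \CS^-_{<0} : \langle E, S(F)\rangle = 0, \ \forall\, E \in \CS^+_{\geq -r}\}$, where slopes are in the standard ($\br = 1$) convention. Moreover, the toroidal analogue of the combinatorial criterion at the end of Subsection \ref{sub:q-char intro}---that $\mu^\btau_\bx \neq 0$ forces each $x_a$ to be either a non-zero pole of $\tau$ or to satisfy $x_a \in \{x_b q_1^{\pm 1}, x_b q_2^{\pm 1}, x_b(q_1q_2)^{\pm 1}\}$ for some $b < a$---implies $\mu^\btau_\bx = 0$ unless $\bx = \b0$, since a polynomial $\tau$ has no non-zero poles to initiate such a chain. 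Hence only the summand $\bx = \b0$ in \eqref{eqn:q-character general toroidal} contributes, giving $\chi_q(L(\btau)) = [\btau]\sum_{n \geq 0}\dim(L^r_{-n})\, h^n$. It therefore suffices to show that $\chi(L^r) := \sum_n \dim(L^r_{-n})h^n$ equals $\prod_{n \geq 1}(1-h^n)^{-rn}$.

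\textbf{Step 2 (slope factorization of $L^r$).} Prove the toroidal analogue of Lemma \ref{lem:composition}: as $\BN$-graded vector spaces,
\begin{equation*}
L^r \ \cong\ \bigotimes^\leftarrow_{\mu \in [-r, 0)} \CB_\mu^-.
\end{equation*}
The proof mirrors Lemma \ref{lem:composition} verbatim. Writing $\CS^{-,\max}_{<-r} := \bigoplus_{n > 0}\CS^-_{<-r|-n}$, the factorizations \eqref{eqn:factorization toroidal 1}--\eqref{eqn:factorization toroidal 2} yield $\CS^- = \CS^-_{\geq -r} \oplus \CS^{-, \max}_{<-r}\cdot \CS^-_{\geq -r}$. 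The orthogonality \eqref{eqn:orthogonal} (in its toroidal form) between $\CS^+_{\geq -r}$ and $\CS^-_{<-r}$, together with non-degeneracy of the pairing \eqref{eqn:pairing toroidal}, gives $F \in J^r \iff S(F) \in \CS^{-, \max}_{<-r}\cdot \CS^-_{\geq -r}$. Applying $S^{-1}$ (an anti-homomorphism mapping $\CS^{-, \max}_{<-r}$ into $\CS^\leq \cdot \CS^{-, \max}_{<-r}$, the toroidal analogue of \eqref{eqn:slope 2}) then yields $F \in J^r \iff F \in \CS^-_{\geq -r}\cdot \CS^{-, \max}_{<-r}$. Quotienting the decomposition $\CS^-_{<0} = \left(\bigotimes^\leftarrow_{\mu \in [-r, 0)}\CB_\mu^-\right)\cdot \CS^-_{<-r}$ by this ideal produces the claimed isomorphism.

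\textbf{Step 3 (combinatorial evaluation).} Invoke the explicit description \eqref{eqn:slope toroidal}: for each coprime $(n, e) \in \BZ_{\geq 1}^2$ with $e \leq rn$ (so $\mu = -e/n \in [-r, 0)$), one has $\CB_\mu^- \cong \BC[p_{-nk, ek}]_{k \geq 1}$ with $p_{-nk, ek}$ of bidegree $(-nk, ek)$. Tracking only horizontal degree, one finds
\begin{equation*}
\chi(L^r) \ =\ \prod_{\substack{n, e \geq 1 \\ \gcd(n,e) = 1, \ e \leq rn}} \prod_{k \geq 1}\frac{1}{1 - h^{nk}} \ =\ \prod_{N \geq 1}\prod_{E=1}^{rN}\frac{1}{1 - h^N} \ =\ \prod_{N \geq 1}\left(\frac{1}{1 - h^N}\right)^{rN},
\end{equation*}
where the middle equality uses the bijection $(n, e, k) \leftrightarrow (N, E) := (nk, ek)$ with $k = \gcd(N, E)$. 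Combined with Step 1, this yields \eqref{eqn:q-character integral toroidal}.

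\textbf{Main obstacle.} The crucial technical step is Step 2: the toroidal adaptation of Lemma \ref{lem:composition}. It rests on three ingredients---the slope factorizations \eqref{eqn:factorization toroidal 1}--\eqref{eqn:factorization toroidal 2}, the slope-orthogonality of the pairing, and the slope-preservation of the antipode---each a natural analogue of its Kac-Moody counterpart. However, these must be carefully checked against the toroidal conventions (the shuffle kernel \eqref{eqn:zeta toroidal} and the pairing normalization \eqref{eqn:pairing toroidal}); in particular, the antipode property requires following the coproduct through the Drinfeld double structure \eqref{eqn:toroidal drinfeld double}. Once Step 2 is secured, Steps 1 and 3 are direct calculations.
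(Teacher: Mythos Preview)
Your proof is correct and follows the same overall strategy as the paper: both identify $\CS^-_{<0}/J^r$ with $\bigotimes^\leftarrow_{\mu \in [-r,0)}\CB_\mu^-$ and then count dimensions using \eqref{eqn:slope toroidal}. The only technical difference lies in how that identification is justified. You transplant the proof of Lemma \ref{lem:composition} verbatim, using the abstract containment $S^{-1}(\CS^{-,\max}_{<-r}) \subset \CS^\leq \cdot \CS^{-,\max}_{<-r}$ coming from slope--coproduct compatibility. The paper instead proves an explicit PBW-level triangularity of the antipode (Lemma \ref{lem:concave}: $S(p^-_{\text{rev}(v)}) \in (-1)^{l(v)} p_v^-\kappa^{|v|} + \sum_{v'\prec v}\BC\cdot p^-_{v'}$) and combines it with the orthogonality \eqref{eqn:pairing toroidal} to exhibit the explicit basis $\{p^-_{\text{rev}(v)} : \text{all slopes in }[-r,0)\}$ of the quotient. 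Your route is slightly more economical; the paper's yields the extra datum of an explicit basis, which it immediately reuses in the proof of Theorem \ref{thm:toroidal}.
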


\medskip

\begin{proof} We will argue as in Lemma \ref{lem:composition}. By formula \eqref{eqn:antipode convex}, for any convex path $v = \{(n_1,d_1),\dots,(n_k,d_k)\}$ satisfying $\frac {d_1}{n_1} \leq \dots \leq \frac {d_k}{n_k} < 0$, the shuffle element
$$
F = p_{\text{rev}(v)}^- 
$$
enjoys the following with respect to any convex path $v' = \{(n_1',d_1'),\dots,(n'_{k'},d_{k'}')\}$ satisfying $-r \leq \frac {d_1'}{n_1'} \leq \dots \leq \frac {d'_{k'}}{n'_{k'}}$
$$
\Big \langle p_{v'}^+, S(F) \Big \rangle \stackrel{\eqref{eqn:pairing toroidal}}= \begin{cases} 0 &\text{if }-r> \frac {d_1}{n_1} \\ 0 &\text{if }-r \leq \frac {d_1}{n_1} \text{ and } v \prec v' \\ \neq 0 &\text{if } -r \leq \frac {d_1}{n_1} \text{ and } v = v' \end{cases}
$$
Then \eqref{eqn:condition equivalent 2} and (the concave path version of) \eqref{eqn:pbw} imply that 
\begin{equation}
\label{eqn:pbw simple}
\CS_{<0}^- \Big / J(\btau) = \bigoplus_{-r \leq \frac {d_1}{n_1} \leq \dots \leq \frac {d_k}{n_k} < 0} \BC \cdot p_{\text{rev}(v)}^-
\end{equation}
Since each $p_{-n,-d}$ contributes a factor of $h^n$ to the $q$-character, this implies \eqref{eqn:q-character integral toroidal}.

\end{proof}

\medskip

\subsection{An extra grading}
\label{sub:extra}

Formula \eqref{eqn:refined} applies equally well to quantum toroidal $\fgl_1$: for any polynomial $\ell$-weight $\btau$ as in \eqref{eqn:integral tau toroidal}, the set $J(\btau)$ is graded with respect to vertical degree, thus allowing us to define the refined $q$-character 
\begin{equation}
\label{eqn:refined toroidal}
\chi_q^{\text{ref}}(L(\btau)) = [\btau] \sum_{n \in \BN} \sum_{d=0}^{\infty} \dim_{\BC}  \left(\CS_{<0|-n,d} \Big/ J(\btau)_{n,d} \right) h^n v^d
\end{equation}
We will now prove Theorem \ref{thm:toroidal}, which states that the refined $q$-character matches the notion considered in \cite[Section 4.2]{FJMM}, and proves the explicit formula in \cite[Conjecture 4.20]{FJMM}. 

\medskip

\begin{proof} \emph{of Theorem \ref{thm:toroidal}:} The explicit formula 
	\begin{equation}
		\label{eqn:explicit formula}
		\chi_q^{\text{ref}}(L(\btau)) = [\btau] \prod_{n =1}^{\infty} \prod_{d=1}^{rn} \frac 1{1-h^nv^d}
	\end{equation}	
is an immediate consequence of \eqref{eqn:pbw simple} and the fact that each generator $p_{-n,-d}$ contributes a factor of $h^nv^d$ to the refined $q$-character. The action of $\CA^\geq$ on $L(\btau)$ interacts with the horizontal and vertical gradings by
\begin{equation}
	\label{eqn:act tor 1}
	F \cdot L(\btau)_{m-n,d} \subseteq L(\btau)_{m-n+\text{hdeg } F, d+\text{vdeg } F}
\end{equation}
\begin{equation}
	\label{eqn:act tor 2}
	\left[\frac {\ph^+(z)}{\tau(z)} \right]_{z^{-u}} \cdot L(\btau)_{m-n,d} \subseteq L(\btau)_{m-n,d+u}
\end{equation}
\begin{equation}
	\label{eqn:act tor 3}
	E \cdot L(\btau)_{m-n,d} \subseteq \bigoplus_{\bullet = 0}^{r(\text{hdeg } E)} L(\btau)_{m-n+\text{hdeg }E, d+\text{vdeg } E - \bullet}
\end{equation}
for any $F,\ph^+(z),E$ in the creating, diagonal, annihilating part of $\CA^{\geq}$, respectively, see \eqref{eqn:eqref 1}. These formulas are proved by analogy with Proposition \ref{prop:bigrading}.
	
\end{proof}

\end{document}